\newcommand{\ms}{S}
\newcommand{\vx}{x}
\newcommand{\mx}{X}
\newcommand{\md}{D}
\newcommand{\mr}{R}
\newcommand{\Xc}{\mathcal{X}}
\newcommand{\Hc}{\mathcal{H}}
\newcommand{\Lc}{\mathcal{L}}
\newcommand{\Escr}{\mathscr{E}}
\newcommand{\Mc}{\mathcal{M}}
\newcommand{\Gc}{\mathcal{G}}
\newcommand{\Fc}{\mathcal{F}}
\newcommand{\pp}{\mathbb{P}}
\newcommand{\C}{\mathbb{C}}
\newcommand{\inv}[1]{#1^{-1}}
\newcommand{\nlsum}{\sum\nolimits}
\newcommand{\nlmin}{\min\nolimits}
\newcommand{\nlprod}{\prod\nolimits}
\newcommand{\reals}{\mathbb{R}}
\newcommand{\posdef}{\mathbb{P}}
\newcommand{\pd}{\posdef}
\renewcommand{\H}{\mathbb{H}}
\newcommand{\riem}{\delta_R}
\newcommand{\thom}{\delta_T}
\newcommand{\gm}{{\#}}
\newcommand{\da}{\downarrow}
\newcommand{\pfrac}[2]{\left(\tfrac{#1}{#2}\right)}
\newcommand{\frob}[1]{\|{#1}\|_{\text{F}}}
\newcommand{\mynorm}[2]{\| {#1} \|_{#2}}
\newcommand{\norm}[1]{\mynorm{#1}{}}
\newcommand{\half}{\tfrac{1}{2}}
\newcommand{\set}[1]{\left\{ {#1}\right\}}
\newcommand{\kron}{\otimes}
\def\nlpkron{\begingroup\textstyle \bigotimes\nolimits\endgroup}
\def\sint{\begingroup\textstyle \int\endgroup} 
\DeclareMathOperator*{\argmin}{argmin}
\DeclareMathOperator{\trace}{tr}
\DeclareMathOperator{\Diag}{Diag}
\DeclareMathOperator{\rank}{rank}
\newtheorem{theorem}{Theorem}
\newtheorem{lemma}[theorem]{Lemma}
\newtheorem{prop}[theorem]{Proposition}
\newtheorem{corr}[theorem]{Corollary}
\theoremstyle{definition}
\newtheorem{defn}[theorem]{Definition}
\newtheorem{example}[theorem]{Example}
\newtheorem{remark}[theorem]{Remark}
\begin{document}
\title{Conic geometric optimisation on the manifold of positive definite matrices}
\author{Suvrit Sra\thanks{A part of this work was done while the author was at Carnegie Mellon University, Pittsburgh (Machine Learning Dept.) on leave from the MPI for Intelligent Systems, T\"ubingen, Germany. A preliminary fraction of this paper was presented at the \emph{Advances in Neural Information Processing Systems, 2013} conference.}
  \and
  Reshad Hosseini\thanks{School of ECE, College of Engineering, University of Tehran, Tehran, Iran}}

\maketitle


\renewcommand{\floatpagefraction}{.95}

\begin{abstract}
  We develop \emph{geometric optimisation} on the manifold of Hermitian positive definite (HPD) matrices. In particular, we consider optimising two types of cost functions: (i) geodesically convex (g-convex); and (ii) log-nonexpansive (LN). G-convex functions are nonconvex in the usual euclidean sense, but convex along the manifold and thus allow global optimisation. LN functions may fail to be even g-convex, but still remain globally optimisable due to their special structure. We develop theoretical tools to recognise and generate g-convex functions as well as cone theoretic fixed-point optimisation algorithms. We illustrate our techniques by applying them to maximum-likelihood parameter estimation for elliptically contoured distributions (a rich class that substantially generalises the multivariate normal distribution). We compare our fixed-point algorithms with sophisticated manifold optimisation methods and obtain notable speedups.
\end{abstract} 

\begin{center}
\textbf{Keywords:} Manifold optimisation, geometric optimisation, geodesic convexity, log-nonexpansive, conic fixed-point theory, Thompson metric, vector transport, Riemannian BFGS
\end{center}

\pagestyle{myheadings}
\thispagestyle{plain}

\section{Introduction}
Hermitian positive definite (HPD) matrices possess a remarkably rich geometry that is a cornerstone of modern convex optimisation~\citep{nestNem94} and convex geometry~\citep{parrilo}.  In particular, HPD matrices form a convex cone, the strict interior of which is a differentiable Riemannian manifold which is also a prototypical CAT(0) space (i.e., a metric space of nonpositive curvature~\citep{bridson}). This rich structure enables ``geometric optimisation'' on the set of HPD matrices---enabling us to solve certain problems that may be nonconvex in the Euclidean sense but are convex in the manifold sense (see \S\ref{sec.gc} or~\citep{wie12}), or failing that, still have enough geometry (see \S\ref{sec.ln}) so as to admit efficient optimisation.

This paper formally develops \emph{conic geometric optimisation}\footnote{To our knowledge the name ``geometric optimisation'' has not been previously attached to g-convex and cone theoretic HPD matrix optimisation, though several scattered examples do exist. Our theorems offer a formal starting point for recognising HPD geometric optimisation problems.} for HPD matrices. We present key results that help recognise geodesic convexity (g-convexity); we also present sufficient conditions that place even several non geodesically convex functions within the grasp of geometric optimisation. 

\subsection*{Motivation} We begin by noting that the widely studied class of \emph{geometric programs} ultimately reduces to conic geometric optimisation on $1\times 1$ HPD matrices (i.e., positive scalars; see Remark~\ref{rmk.logc}). Geometric programming has enjoyed great success across a spectrum of applications---see e.g., the survey of~\citet{boyd.gp}; we hope this paper helps conic geometric optimisation gain wider exposure.

Perhaps the best known conic geometric optimisation problem is computation of the Karcher (Fr\'echet) mean of a set of HPD matrices, a topic that has attracted great attention within matrix theory~\citep{bhatia11,ssdiv,bruno,jeuVaVa}, computer vision~\citep{vemuri}, radar imaging~\citep[Part II]{nieBha13}, medical imaging~\citep{dti,vemuri12}---we refer the reader to the recent book~\citep{nieBha13} for additional applications and references. Another basic geometric optimisation problem arises as a subroutine in image search and matrix clustering~\citep{chSra12}. 

Conic geometric optimisation problems also occur in several other areas: statistics (covariance shrinkage)~\citep{chen11}, nonlinear matrix equations~\citep{leeLim}, Markov decision processes and more broadly in the fascinating areas of nonlinear Perron-Frobenius theory~\citep{lemNuss12}. 

As a concrete illustration of our ideas, we discuss the task of \emph{maximum likelihood estimate} (mle) for \emph{elliptically contoured distributions} (ECDs)~\citep{cambanis81,gupta99,muirhead82}---see \S\ref{sec.ecd}. We use ECDs to illustrate our theory, not only because of their instructive value but also because of their importance in a variety of applications~\citep{ollila11}. 

\subsection*{Outline} The main focus of this paper is on recognising and constructing certain structured nonconvex functions of HPD matrices. In particular, Section~\ref{sec.gc} studies the class of geodesically convex functions, while Section~\ref{sec.ln} introduces ``log-nonexpansive'' functions. We present a limited-memory BFGS algorithm in Section~\ref{sec.manopt}, where we also present a derivation for the \emph{parallel transport}, which, we could not find elsewhere in the literature. Even though manifold optimisation algorithms apply to both classes of functions, for log-nonexpansive functions we advance fixed-point theory and algorithms separately in Section~\ref{sec.ln}.  We present an application of geometric optimisation in Section~\ref{sec.ecd}, where we consider statistical inference with elliptically contoured distributions. Numerical results are the subject of Section~\ref{sec.expt}.

\section{Geodesic convexity for HPD matrices}
\label{sec.gc}
Geodesic convexity (g-convexity) is a classical concept in geometry and analysis; it is used extensively in the study of Hadamard manifolds and metric spaces of \emph{nonpositive curvature}~\citep{bridson,athan}, i.e., metric spaces having a g-convex distance function. The concept of g-convexity has been previously explored in nonlinear optimisation~\citep{rapsak}, but its  importance and applicability in statistical applications and optimisation has only recently gained more attention~\citep{wie12,zhang13}. It is worth noting that geometric programming~\citep{boyd.gp} ultimately relies on ``geometric-mean'' convexity~\citep{niculescu}, i.e., $f(x^\alpha y^{1-\alpha}) \le [f(x)]^\alpha[f(y)]^{1-\alpha}$, which is nothing but logarithmic g-convexity on $1\times 1$ HPD matrices (positive scalars).

To introduce g-convexity on $n\times n$ HPD matrices we begin by recalling some key definitions---see \citep{bridson,athan} for extensive details. 

\begin{defn}[g-convex sets]
  Let $\Mc$ be a $d$-dimensional connected $C^2$ Riemannian manifold. A set $\Xc \subset \Mc$ is called \emph{geodesically convex} if any two points of $\Xc$ are joined by a geodesic lying in $\Xc$. That is, if $x, y \in \Xc$, then there exists a shortest path $\gamma: [0,1] \to \Xc$ such that $\gamma(0)=x$ and $\gamma(1)=y$.
\end{defn}

\begin{defn}[g-convex functions]
  Let $\Xc \subset \Mc$ be a g-convex set. A function $\phi : \Xc \to \reals$ is called \emph{geodesically convex}, if for any $x, y \in \Xc$, we have the inequality
  \begin{equation}
    \label{eq.2}
   \phi(\gamma(t)) \le (1-t)\phi(\gamma(0)) + t\phi(\gamma(1)) = (1-t)\phi(x) + t\phi(y),
  \end{equation}
  where $\gamma(\cdot)$ is the geodesic $\gamma: [0,1] \to \Xc$ with $\gamma(0)=x$ and $\gamma(1)=y$.
\end{defn}

\subsection{Recognising g-convexity}
Unlike scalar g-convexity, for matrices recognising g-convexity is not so easy. Indeed, for scalars, a function $f: \reals_{++}\to \reals$ is log-g-convex (and hence g-convex) if and only if $\log \circ f \circ \exp$ is convex. A similar characterisation does not seem to exist for HPD matrices, primarily due to the noncommutativity of matrix multiplication. We develop some theory below for helping recognise and construct g-convex functions.

To define g-convex functions on HPD matrices recall that $\pp_d$ is a differentiable Riemannian manifold where geodesics between points are available in closed form. Indeed, the tangent space to $\pp_d$ at any point can be identified with the set of Hermitian matrices, and the inner product on this space leads to a Riemannian metric on $\pp_d$. At any point $A \in \pp_d$, this metric is given by the differential form $ds = \frob{A^{-1/2}dAA^{-1/2}}$; for $A, B \in \pp_d$ there is a unique geodesic path~\citep[Thm.~6.1.6]{bhatia07} 
\begin{equation}
  \label{eq.14}
  \gamma(t) = A \gm_t B := A^{1/2}(A^{-1/2}BA^{-1/2})^tA^{1/2},\quad t \in [0,1].
\end{equation}
The midpoint of this path, namely $A\gm_{1/2}B$ is called the \emph{matrix geometric mean}, which is an object of great interest~\citep{bhatia07,bhatia11,jeuVaVa,bruno}---we drop the $1/2$ and denote it simply by $A\gm B$. Starting from the geodesic~\eqref{eq.14}, many g-convex functions can be constructed by extending monotonic convex functions to matrices. To that end, first recall the fundamental operator inequality~\citep{ando79} (where $\preceq$ denotes the L\"owner partial order):
\begin{equation}
  \label{eq:1}
  A\gm_t B \preceq (1-t)A+ tB.
\end{equation}
Theorem~\ref{thm.trace.gc} uses the operator inequality~\eqref{eq:1} to construct ``tracial'' g-convex functions.

\begin{theorem}
  \label{thm.trace.gc}
  Let $h : \reals_+ \to \reals$ be monotonically increasing and convex; let $\lambda : \pp_n \to \reals_+^n$ denote the eigenvalue map and $\lambda^\da(\cdot)$ its decreasingly sorted version.  Then, $\nlsum_{j=1}^k h(\lambda_j^\da(\cdot))$ is g-convex for each $1\le k \le n$. 
\end{theorem}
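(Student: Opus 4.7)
The plan is to reduce the desired geodesic inequality to a pair of purely Euclidean facts about the function $\Phi_k(X) := \sum_{j=1}^k h(\lambda_j^\da(X))$ on Hermitian matrices, namely that $\Phi_k$ is (a) monotone with respect to the L\"owner order, and (b) convex in the usual sense. Once (a) and (b) are in hand, the g-convexity follows by sandwiching through the operator inequality~\eqref{eq:1}: for $\gamma(t) = A \gm_t B$,
\begin{equation*}
\Phi_k(\gamma(t)) \;\le\; \Phi_k\bigl((1-t)A+tB\bigr) \;\le\; (1-t)\Phi_k(A) + t\Phi_k(B),
\end{equation*}
the first step using (a) together with $\gamma(t) \preceq (1-t)A+tB$, and the second using (b).

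For (a) I would invoke Weyl's monotonicity theorem, which gives $\lambda_j^\da(X) \le \lambda_j^\da(Y)$ whenever $X \preceq Y$; applying the nondecreasing scalar $h$ componentwise and summing preserves the inequality, so $\Phi_k$ is monotone. For (b) the main tool is Ky Fan's weak majorisation inequality $\lambda^\da(X+Y) \majw \lambda^\da(X) + \lambda^\da(Y)$ for Hermitian $X,Y$. Choosing $X=(1-t)A$ and $Y=tB$ gives $\lambda^\da((1-t)A+tB) \majw (1-t)\lambda^\da(A) + t\lambda^\da(B)$. Because the right-hand vector is already decreasingly arranged (a nonnegative combination of two decreasing nonnegative sequences), the classical preservation principle for weak majorisation under convex nondecreasing maps yields
\begin{equation*}
\sum_{j=1}^k h\bigl(\lambda_j^\da((1-t)A+tB)\bigr) \;\le\; \sum_{j=1}^k h\bigl((1-t)\lambda_j^\da(A) + t\lambda_j^\da(B)\bigr).
\end{equation*}
Finally, scalar convexity of $h$ applied inside each summand bounds the right-hand side by $(1-t)\Phi_k(A) + t\Phi_k(B)$, establishing (b).

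The main obstacle I anticipate is the careful bookkeeping around sorted rearrangements: weak-majorisation preservation under a convex nondecreasing map must a priori be phrased in terms of decreasing rearrangements, and one must verify the two sides of the desired inequality line up with the correctly sorted spectra. This is painless here because the relevant right-hand vector $(1-t)\lambda^\da(A) + t\lambda^\da(B)$ is already sorted, so the preservation lemma applies componentwise without reshuffling. With that point handled, the rest of the argument is a clean composition of standard spectral inequalities, with~\eqref{eq:1} providing the only genuinely ``geometric'' ingredient that bridges the geodesic and Euclidean worlds.
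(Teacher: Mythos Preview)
Your proposal is correct and follows essentially the same route as the paper: use the operator inequality~\eqref{eq:1} together with Weyl monotonicity to pass from the geodesic midpoint to the arithmetic mean, then invoke a majorisation between $\lambda^\da\bigl((1-t)A+tB\bigr)$ and $(1-t)\lambda^\da(A)+t\lambda^\da(B)$ and the Hardy--Littlewood--P\'olya preservation principle for convex (nondecreasing) $h$. The only cosmetic differences are that the paper works at the midpoint and cites Lidskii's theorem for the (strong) majorisation, whereas you treat general $t$ and phrase the same step via Ky Fan's weak majorisation; since $h$ is monotone, weak majorisation already suffices, so the two arguments coincide.
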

\begin{proof}
  It suffices to establish midpoint convexity. Inequality~\eqref{eq:1} implies that
  \begin{equation*}
    \lambda_j(A\gm B) \le \lambda_j\pfrac{A+B}{2},\quad \text{for}\quad 1\le j \le n. 
  \end{equation*}
  Since $h$ is monotonic, for $1\le k \le n$ it follows that
  \begin{equation}
    \label{eq:3}
    \nlsum_{j=1}^kh(\lambda_j^\da(A\gm B)) \le \nlsum_{j=1}^kh(\lambda_j^\da\pfrac{A+B}{2}).
  \end{equation}
  Lidskii's theorem~\citep[Thm.III.4.1]{bhatia97} yields the majorisation  $\lambda^\da\pfrac{A+B}{2} \prec \tfrac{\lambda^\downarrow(A)+\lambda^\downarrow(B)}{2}$, which combined with a celebrated result of~\citet{haLiPo}\footnote{For a more recent textbook exposition, see e.g., \citep[Theorem~1.5.4]{niculescu}.} and convexity of $h$ yields
  \begin{equation*}
    \sum_{j=1}^kh(\lambda_j^\da\pfrac{A+B}{2}) \le \sum_{j=1}^k h\bigl(\tfrac{\lambda_j^\da(A)+\lambda_j^\da(B)}{2}\bigr) \le \half\sum_{j=1}^k h(\lambda_j^\da(A))+\half \sum_{j=1}^k h(\lambda_j^\da(B)).
  \end{equation*}
  Now invoke inequality~\eqref{eq:3} to conclude that $\nlsum_{j=1}^k h(\lambda_j^\da(\cdot))$ is g-convex.
\end{proof}

\begin{example}
  Theorem~\ref{thm.trace.gc} shows that the following functions are g-convex: (i) $\phi(A)=\trace(e^A)$; (ii) $\phi(A)=\trace(A^\alpha)$ for $\alpha \ge 1$; (iii) $\lambda_1^\da(e^A)$; (iv) $\lambda_1^\da(A^\alpha)$ for $\alpha \ge 1$.
\end{example}

\noindent We now construct examples of g-convex functions different from those obtained via Theorem~\ref{thm.trace.gc}. Let us start with a motivating example.
\begin{example}
  \label{eg.one}
  Let $z \in \C^d$. The function $\phi(A) := z^*A^{-1}z$ is g-convex.
  To prove this claim it suffices to verify midpoint convexity: $
    \phi(A\gm B) \le \half\phi(A) + \half\phi(B)$ for $A, B \in \pp_d$.
  Since $(A\gm B)^{-1} = A^{-1}\gm B^{-1}$ and $\inv{A} \gm \inv{B} \preceq \tfrac{\inv{A}+\inv{B}}{2}$ (\citep[4.16]{bhatia07}), it follows that $\phi(A\gm B) = z^*(A\gm B)^{-1}z \le \half(z^*A^{-1}z + z^*\inv{B}z)=\half(\phi(A)+\phi(B))$.
\end{example}

Below we substantially generalise this example; but first some background.

\begin{defn}[Positive linear map]
  A linear map $\Phi$ from Hilbert space $\Hc_1$ to a Hilbert space $\Hc_2$ is called \emph{positive}, if for $0 \preceq A \in \Hc_1$, $\Phi(A) \succeq 0$. It is called \emph{strictly positive} if $\Phi(A) \succ 0$ for $A \succ 0$; finally, it is called \emph{unital} if $\Phi(I) = I$.
\end{defn}

\begin{lemma}[{\cite[Ex.4.1.5]{bhatia07}}]
  \label{lem.parsum}
  Define the \emph{parallel sum} of HPD matrices $A, B$ as
  \begin{equation*}
    \label{eq.20}
    A : B := [\inv{A} + \inv{B}]^{-1}.
  \end{equation*}
  Then, for any positive linear map $\Pi: \pp_d \to \pp_k$, we have
  \begin{equation*}
    \Phi(A:B) \preceq \Phi(A): \Phi(B).
  \end{equation*}
\end{lemma}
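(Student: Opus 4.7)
My plan is to derive the inequality from a single clean algebraic identity for the parallel sum, combined with an operator Schwarz step. Set $X := A:B$, so by definition $X^{-1} = A^{-1} + B^{-1}$. Multiplying this identity by $X$ on both sides produces the crucial equality
\begin{equation*}
    X A^{-1} X + X B^{-1} X = X,
\end{equation*}
which rewrites $X$ as a sum of two ``Schwarz-friendly'' pieces, each involving only $A$ or only $B$.

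I would then invoke the operator Schwarz (Choi) inequality: for $C \succ 0$ and Hermitian $Y$,
\begin{equation*}
    \Phi(Y C^{-1} Y) \succeq \Phi(Y) \Phi(C)^{-1} \Phi(Y).
\end{equation*}
The quickest route to this is to observe that $\bigl(\begin{smallmatrix} Y C^{-1} Y & Y \\ Y & C\end{smallmatrix}\bigr) \succeq 0$ (its Schur complement of $C$ vanishes identically), then apply $\Phi$ blockwise; the resulting $2 \times 2$ block matrix remains PSD, whence taking the Schur complement of the new $(2,2)$ entry $\Phi(C)$ yields the displayed inequality. Specializing to $Y = X$ with first $C = A$ and then $C = B$ and adding the two estimates, the left-hand sides collapse, via the identity of the previous paragraph, into
\begin{equation*}
    \Phi(X) \;=\; \Phi(X A^{-1} X) + \Phi(X B^{-1} X) \;\succeq\; \Phi(X)\bigl[\Phi(A)^{-1} + \Phi(B)^{-1}\bigr]\Phi(X).
\end{equation*}

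Conjugating both sides by $\Phi(X)^{-1}$ (legitimate after perturbing $A \mapsto A + \epsilon I$, $B \mapsto B + \epsilon I$ and letting $\epsilon \downarrow 0$ at the end, using continuity of both $\Phi$ and of the parallel sum) leaves $\Phi(X)^{-1} \succeq \Phi(A)^{-1} + \Phi(B)^{-1}$; L\"owner-order inversion then delivers
\begin{equation*}
    \Phi(A:B) \;=\; \Phi(X) \;\preceq\; \bigl[\Phi(A)^{-1} + \Phi(B)^{-1}\bigr]^{-1} \;=\; \Phi(A) : \Phi(B),
\end{equation*}
as required. The main technical delicacy lies in the Schwarz step: ensuring that the blockwise application of $\Phi$ preserves PSD-ness of the ambient block matrix is automatic for $2$-positive (in particular, completely positive) maps, the class implicit in the exercise cited from~\citep{bhatia07}; for a merely positive map one would instead appeal to Kubo--Ando theory together with the L\"owner integral representation of operator means to reach the same conclusion.
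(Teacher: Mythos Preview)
The paper does not supply its own proof of this lemma; it is stated with a bare citation to~\citep[Ex.~4.1.5]{bhatia07} and then used as a black box in the proof of Theorem~\ref{thm.linmap}. So there is nothing in the paper to compare your argument against, and the relevant question is simply whether your proof stands on its own.

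It does, with the caveat you already flag. The identity $X = XA^{-1}X + XB^{-1}X$ is correct and is indeed the natural decomposition; applying the operator Schwarz inequality $\Phi(YC^{-1}Y)\succeq \Phi(Y)\Phi(C)^{-1}\Phi(Y)$ termwise and summing gives exactly the bound you need. Your $2\times 2$ block/Schur-complement justification of that Schwarz step is the standard one, and you are right that it requires $\Phi$ to be $2$-positive (blockwise application of a merely positive map need not preserve positivity --- the transpose map is the usual counterexample). Since the lemma as stated in the paper asks only for a positive linear map, your final sentence is the important one: the full-strength statement really does need the Kubo--Ando route (the parallel sum is half the harmonic mean, a bona fide Kubo--Ando operator mean, and their transformer/monotonicity machinery yields $\Phi(A\sigma B)\preceq \Phi(A)\sigma\Phi(B)$ for every positive linear map). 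One small point: since the paper's hypothesis already places $A,B$ in $\pp_d$ and takes $\Phi$ to land in $\pp_k$, the perturbation $A\mapsto A+\epsilon I$ is unnecessary --- $\Phi(X)$ is automatically invertible.
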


Building on Lemma~\ref{lem.parsum}, we are ready to state a key theorem that helps us recognise and construct g-convex functions (see Thm.~\ref{thm.gc}, for instance). This result is by itself not new---e.g., it follows from the classic paper of~\citet{kuboAndo80}; due to its key importance we provide our own proof below for completeness.
\begin{theorem}
  \label{thm.linmap}
  Let $\Phi: \pp_d \to \pp_k$ be a strictly positive linear map. Then, 
  \begin{equation}
    \label{eq.8}
    \Phi(A \gm_t B) \preceq \Phi(A) \gm_t \Phi(B),\qquad  t \in [0,1],\ \text{for } A, B \in \pp_d.
  \end{equation}
\end{theorem}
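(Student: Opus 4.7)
My plan is in three stages: establish the inequality for $t=1/2$ first, extend it to all dyadic rationals $t\in[0,1]$ by a geodesic-midpoint recursion, and finally pass to arbitrary $t$ by continuity.

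\textbf{Stage 1 ($t=1/2$).} I would exploit the classical arithmetic--harmonic mean (AHM) iteration, which realises $A\gm B$ as a monotone limit. Setting $A_0=A$, $B_0=B$ and, for $n\ge 0$,
\[
A_{n+1} \;=\; \tfrac12(A_n + B_n), \qquad B_{n+1} \;=\; 2\,(A_n : B_n),
\]
one has $B_n \preceq A\gm B \preceq A_n$ and both sequences converge to $A\gm B$. Running the same iteration with initial data $\Phi(A),\Phi(B)$ produces sequences $\tilde A_n,\tilde B_n \to \Phi(A)\gm\Phi(B)$. A simultaneous induction then yields $\Phi(A_n)\preceq \tilde A_n$ and $\Phi(B_n)\preceq \tilde B_n$: the arithmetic step follows from linearity of $\Phi$, while the harmonic step combines Lemma~\ref{lem.parsum} (to push $\Phi$ inside the parallel sum) with the fact that the parallel sum is monotone in both arguments (immediate from $X\preceq X'\Rightarrow \inv{X}\succeq \inv{X'}$). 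Letting $n\to\infty$ gives $\Phi(A\gm B)\preceq \Phi(A)\gm\Phi(B)$.

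\textbf{Stage 2 (dyadic rationals).} Since $t\mapsto A\gm_t B$ parameterises the geodesic from $A$ to $B$ with constant speed, the midpoint of the subgeodesic between $A\gm_s B$ and $A\gm_t B$ is $A\gm_{(s+t)/2}B$; this can also be verified directly from the formula~\eqref{eq.14} by diagonalising $A^{-1/2}BA^{-1/2}$. That is,
\[
(A\gm_s B)\;\gm\;(A\gm_t B) \;=\; A\gm_{(s+t)/2}B, \qquad s,t\in[0,1].
\]
Using this identity, the Stage 1 inequality applied to $U=A\gm_s B$, $V=A\gm_t B$, and the joint monotonicity of the binary geometric mean, one obtains by induction on the denominator $2^n$ that $\Phi(A\gm_t B)\preceq \Phi(A)\gm_t \Phi(B)$ for every dyadic rational $t=k/2^n\in[0,1]$. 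The base cases $t\in\{0,1\}$ are trivial.

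\textbf{Stage 3 (continuity).} Since $X\mapsto X^t$ depends continuously on $t$ for $X\succ 0$, both sides of~\eqref{eq.8} are continuous in $t$; density of the dyadic rationals in $[0,1]$ closes the argument. The technical heart is Stage 1, where one must correctly interleave linearity (for the arithmetic step) with Lemma~\ref{lem.parsum} and monotonicity of the parallel sum (for the harmonic step) so that both componentwise inequalities propagate through the recursion; the remaining stages are a routine geodesic-bisection plus continuity argument.
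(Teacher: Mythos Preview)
Your proof is correct, but it follows a genuinely different route from the paper's. The paper obtains~\eqref{eq.8} for \emph{all} $t\in[0,1]$ in one stroke by writing $A\gm_t B$ via the integral representation
\[
A\gm_t B \;=\; \int_0^1 \bigl[(1-\lambda)A^{-1}+\lambda B^{-1}\bigr]^{-1}\,d\mu_t(\lambda),
\]
derived from a Beta-function identity for $C^t$; this expresses the weighted geometric mean as an integral of (rescaled) parallel sums, so applying $\Phi$ and invoking Lemma~\ref{lem.parsum} pointwise under the integral immediately yields $\Phi(A\gm_t B)\preceq \Phi(A)\gm_t\Phi(B)$. Your approach instead handles $t=1/2$ by the AHM iteration, then bootstraps to dyadic $t$ using the geodesic midpoint identity together with the \emph{joint monotonicity} of the binary geometric mean, and finally closes by continuity. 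The trade-off: the paper's argument is shorter and treats all $t$ uniformly, but relies on knowing the integral formula; your argument is more elementary (no special-function identities) and perhaps more transparent, at the cost of an extra ingredient---monotonicity of $\gm$---that the paper's proof never needs. Both proofs ultimately rest on Lemma~\ref{lem.parsum}.
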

\begin{proof}
  The key insight of the proof is to use the integral identity~\citep{andoLiMa04}:
  \begin{equation*}
    \int_0^1\frac{\lambda^{\alpha-1}(1-\lambda)^{\beta-1}}{[\lambda a^{-1}+(1-\lambda)b^{-1}]^{\alpha+\beta}} d\lambda = \frac{\Gamma(\alpha)\Gamma(\beta)}{\Gamma(\alpha+\beta)}a^\alpha b^\beta
  \end{equation*}
  Using $\alpha=1-t$ and $\beta=t>0$, for $C\succeq 0$ this yields the integral representation 
  \begin{equation}
    \label{eq.18}
    C^t = \frac{\Gamma(1)}{\Gamma(t)\Gamma(1-t)}\int_0^1 \frac{\bigl[\lambda C^{-1} + (1-\lambda)I \bigr]^{-1}}{\lambda^t(1-\lambda)^{1-t}}d\lambda,
  \end{equation}
  where $\Gamma$ is the usual Gamma function.
  Since $A\gm_t B = A^{1/2}(A^{-1/2}BA^{-1/2})^tA^{1/2}$, using~\eqref{eq.18} we may write it as
  \begin{equation}
    \label{eq.19}
    A\gm_tB = \sint_0^1 \bigl[(1-\lambda)A^{-1}+\lambda B^{-1}\bigr]^{-1}d\mu(\lambda),
  \end{equation}
  for a suitable measure $d\mu(\lambda)$. Applying the map $\Phi$ to both sides of~\eqref{eq.19} we obtain
  \begin{align*}
    \Phi(A\gm_tB) &= \sint_0^1\Phi\bigl(\bigl[(1-\lambda)A^{-1}+\lambda B^{-1}\bigr]^{-1}\bigr)d\mu(\lambda)\\
    &= \sint_0^1 \Phi( \bar{A} : \bar{B})d\mu(\lambda),
  \end{align*}
  where $\bar{A} = (1-\lambda)^{-1}A$ and $\bar{B}=\lambda^{-1}B$. Using Lemma~\ref{lem.parsum} and linearity of $\Phi$ we see
  \begin{equation*}
    \begin{split}
      \sint_0^1 \Phi( \bar{A} : \bar{B})d\mu(\lambda) &\preceq\ \ \sint_0^1 \left(\Phi(\bar{A}) : \Phi(\bar{B})\right)d\mu(\lambda)\\
      &=\ \ \sint_0^1\left[(1-\lambda)\Phi(A)^{-1} + \lambda\Phi(B)^{-1}\right]^{-1}d\mu(\lambda)\\
      &\stackrel{\eqref{eq.19}}{=}\ \Phi(A) \gm_t \Phi(B),
    \end{split}
  \end{equation*}
  which completes the proof.
\end{proof}

A corollary of Theorem~\ref{thm.linmap} (that subsumes Example~\ref{eg.one}) follows.
\begin{corr}
  \label{corollary.trace}
  Let $A, B \in \pp_d$, and let $X \in \C^{d \times k}$ have full column rank; then
  \begin{equation}
    \label{eq.13}
    \trace X^*(A\gm_t B)X \le [\trace X^*AX]^{1-t}[\trace X^*BX]^{t},\qquad t \in [0,1].
  \end{equation}
\end{corr}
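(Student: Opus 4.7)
The plan is to apply Theorem~\ref{thm.linmap} to a specifically tailored positive linear map, then take traces, and finally reduce the resulting matrix inequality to a one-dimensional log-convexity statement. Define $\Phi: \pp_d \to \pp_k$ by $\Phi(M) := X^*MX$; this map is linear, and strictly positive because $X$ has full column rank, so that $X^*MX \succ 0$ whenever $M \succ 0$. Theorem~\ref{thm.linmap} then yields
\[
  X^*(A\gm_t B)X \;\preceq\; (X^*AX)\gm_t(X^*BX),
\]
and since the trace is monotone under the L\"owner order,
\[
  \trace X^*(A\gm_t B)X \;\le\; \trace\bigl[(X^*AX)\gm_t(X^*BX)\bigr].
\]

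It then suffices to establish the following scalar inequality: for all $P, Q \in \pp_k$,
\[
  \trace(P\gm_t Q) \;\le\; (\trace P)^{1-t}\,(\trace Q)^{t}, \qquad t \in [0,1].
\]
I would prove this by showing that the function $\phi(t) := \trace(P\gm_t Q)$ is log-convex on $[0,1]$; since $\phi(0) = \trace P$ and $\phi(1) = \trace Q$, log-convexity immediately yields the claimed bound. To exhibit log-convexity, write $\phi(t) = \trace(P M^t)$ with $M := P^{-1/2}QP^{-1/2} \succ 0$, and use the spectral decomposition $M = UDU^*$ with $D = \Diag(d_1,\ldots,d_k)$ and $d_i > 0$ to obtain
\[
  \phi(t) \;=\; \trace\bigl(U^*PU\, D^t\bigr) \;=\; \sum_{i=1}^k p_i\, d_i^{\,t}, \qquad p_i := (U^*PU)_{ii} > 0.
\]
Each summand $p_i d_i^{\,t}$ is log-affine in $t$, so $\phi$ is a positive combination of log-convex functions, hence log-convex.

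The main obstacle is the second step: one cannot invoke H\"older's inequality directly because $P$ and $Q$ generally fail to commute. The spectral-decomposition trick diagonalises the nontrivial factor $M^t$ and reveals $\phi(t)$ as a sum of exponentials in the variable $t$ (with bases $d_i$), which makes log-convexity transparent. Once this log-convexity is in hand, the remaining pieces---positivity/strict positivity of $\Phi$, invocation of Theorem~\ref{thm.linmap}, and monotonicity of the trace under the L\"owner order---are entirely routine, and chaining them together delivers~\eqref{eq.13}.
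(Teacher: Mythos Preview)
Your argument is correct, but it takes a two-step detour where the paper's proof needs only one step. The paper applies Theorem~\ref{thm.linmap} directly to the \emph{scalar-valued} strictly positive linear map $A \mapsto \trace X^*AX$, which lands in $\pp_1$; there the weighted geometric mean $a\gm_t b$ is simply $a^{1-t}b^t$, so inequality~\eqref{eq.13} drops out immediately. You instead apply Theorem~\ref{thm.linmap} to the $\pp_k$-valued map $M\mapsto X^*MX$, take traces, and then separately prove $\trace(P\gm_t Q)\le(\trace P)^{1-t}(\trace Q)^t$ via a log-convexity argument. That second step is itself just the instance of Theorem~\ref{thm.linmap} with $\Phi=\trace$, so you are effectively invoking the theorem twice where once suffices. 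On the other hand, your spectral-decomposition computation gives a pleasant, fully elementary proof of the trace case that does not rely on the integral representation used inside Theorem~\ref{thm.linmap}, and along the way you obtain the stronger operator inequality $X^*(A\gm_t B)X\preceq(X^*AX)\gm_t(X^*BX)$.
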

\begin{proof}
  Use the positive linear map $A \mapsto \trace X^*AX$ in Theorem~\ref{thm.linmap}.
\end{proof}
\begin{remark}
  \label{rmk.logc}
  Corollary~\ref{corollary.trace} actually proves a result  stronger than g-convexity: it shows \emph{log-g-convexity}, i.e., $\phi(X\gm Y) \le \sqrt{\phi(X)\phi(Y)}$, so that $\log\phi$ is
  g-convex. It is easy to verify that if $\phi_1,\phi_2$ are log-g-convex, then both $\phi_1\phi_2$  and $\phi_1+\phi_2$ are log-g-convex. 
\end{remark}

\begin{remark}
  \label{rmk.logc2}
  More generally, if $h: \reals_+\to \reals_+$ is nondecreasing and log-convex, then the map $A\mapsto \nlsum_{i=1}^k\log h(\lambda_i(A))$ is g-convex. The proof is the same as of  Theorem~\ref{thm.trace.gc}. For instance, if $h(x) = e^x$, we obtain the special case that $A\mapsto \log\trace(e^A)$ is g-convex, i.e.,
  \begin{equation*}
    \log\nlsum_{i=1}^n e^{\lambda_i(A\gm B)} \le \log\nlsum_{i=1}^ne^{\frac{\lambda_i(A)+\lambda_i(B)}{2}} \le \half\log\nlsum_{i=1}^ne^{\lambda_i(A)}+\half\log\nlsum_{i=1}^ne^{\lambda_i(B)}.
  \end{equation*}
\end{remark}

We mention now another corollary to Theorem~\ref{thm.linmap}; we note in passing that it subsumes a more complicated result of Gurvits and Samorodnitsky~\citep[Lem.~3.2]{gurvits}.
\begin{corr}
  \label{corollary.det}
  Let $A_i \in \C^{d\times k}$ with $k \le d$ such that $\rank([A_i]_{i=1}^m)=k$; also let $B \succeq 0$. Then $\phi(X) := \log\det(B+\nlsum_i A_i^*XA_i)$ is g-convex on $\pp_d$.
\end{corr}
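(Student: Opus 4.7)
The plan is to apply Theorem~\ref{thm.linmap} to the map $\Phi(X):=\sum_i A_i^* X A_i$, then chain the resulting geodesic inequality with two standard facts about $\log\det$ and the weighted geometric mean. First, I would verify that $\Phi$ is a strictly positive linear map from $\pp_d$ to $\pp_k$: linearity is evident, and for $X\succ 0$ and $v\in\C^k$ we have $v^*\Phi(X)v=\sum_i\|X^{1/2}A_iv\|^2$, which vanishes only if $A_iv=0$ for every $i$; the hypothesis $\rank([A_i]_{i=1}^m)=k$ then forces $v=0$. Thus $\Phi(X)\succ 0$ for every $X\in\pp_d$, so $B+\Phi(X)\succ 0$ and $\log\det(B+\Phi(X))$ is well defined even when $B$ is singular.

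Next I would record two ingredients. (i) From the closed-form geodesic~\eqref{eq.14}, a short determinant computation yields $\det(U\gm_tV)=\det(U)^{1-t}\det(V)^t$, so $\log\det$ is \emph{g-affine} (linear along geodesics) on $\pp_k$. (ii) The weighted matrix geometric mean is \emph{superadditive}: for positive definite matrices, $(C_1+D_1)\gm_t(C_2+D_2)\succeq C_1\gm_tC_2+D_1\gm_tD_2$; this is the classical Ando/Kubo-Ando joint concavity statement. Taking $C_1=C_2=B$ (using continuity if $B$ is only semidefinite, so that $B\gm_tB=B$) specializes this to
\[(B+\Phi(X))\gm_t(B+\Phi(Y))\succeq B+\Phi(X)\gm_t\Phi(Y).\]

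The proof now assembles in a short chain. By Theorem~\ref{thm.linmap}, $\Phi(X\gm_tY)\preceq\Phi(X)\gm_t\Phi(Y)$; adding $B$ and composing with the specialization above gives $B+\Phi(X\gm_tY)\preceq(B+\Phi(X))\gm_t(B+\Phi(Y))$. Since $\log\det$ is monotone in the L\"owner order and g-affine, applying it to both sides yields
\[\phi(X\gm_tY)\le(1-t)\log\det(B+\Phi(X))+t\log\det(B+\Phi(Y))=(1-t)\phi(X)+t\phi(Y),\]
which is precisely g-convexity. The main obstacle, or rather the only non-mechanical step, is recognising that superadditivity of $\gm_t$ is exactly the bridge needed to move the additive constant $B$ past the geometric mean: otherwise the passage from $B+\Phi(X)\gm_t\Phi(Y)$ to $(B+\Phi(X))\gm_t(B+\Phi(Y))$ has no reason to hold. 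One could alternatively derive that superadditivity from the integral representation~\eqref{eq.19} used in the proof of Theorem~\ref{thm.linmap}, since each integrand is a parallel sum and parallel sums are jointly concave (Lemma~\ref{lem.parsum} and its standard two-argument analogue).
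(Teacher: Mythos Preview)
Your proof is correct and follows the same overall arc as the paper's: apply Theorem~\ref{thm.linmap}, then use monotonicity of $\log\det$ together with the identity $\det(U\gm_t V)=\det(U)^{1-t}\det(V)^t$. The one genuine difference is how the additive constant $B$ is handled. The paper applies Theorem~\ref{thm.linmap} directly to the map $X\mapsto B+\sum_iA_i^*XA_i$, treating it as a strictly positive linear map; but this map is affine, not linear, so strictly speaking Theorem~\ref{thm.linmap} (stated for linear maps) does not apply to it as written. You instead apply Theorem~\ref{thm.linmap} only to the linear part $\Psi(X)=\sum_iA_i^*XA_i$ and then invoke the Kubo--Ando superadditivity $(C_1+D_1)\gm_t(C_2+D_2)\succeq C_1\gm_tC_2+D_1\gm_tD_2$ (with $C_1=C_2=B$) to pass $B$ through the geometric mean. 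This is the honest way to justify the step $B+\Psi(X\gm_tY)\preceq(B+\Psi(X))\gm_t(B+\Psi(Y))$, and your remark that one could alternatively extract it from the integral representation~\eqref{eq.19} via joint concavity of the parallel sum is exactly how one would extend Theorem~\ref{thm.linmap} to affine maps in the first place. So your argument is essentially a more carefully stated version of the paper's, filling a small gap that the paper leaves implicit.
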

\begin{proof}
  By our assumption on $A_i$ and $B$, the map $\Phi = \ms \mapsto B+\nlsum_iA_i^*XA_i$ is strictly positive. Thm.~\ref{thm.linmap} implies that $\Phi(X\gm Y) = B+\nlsum_i A_i^*(X \gm Y)A_i \preceq \Phi(X)\gm\Phi(Y)$. This operator inequality is stronger than what we require. Indeed, since $\log\det$ is monotonic and determinants are multiplicative, from this inequality it follows that
  \begin{equation*}
    \begin{split}
      \phi(\ms\gm\mr) &=\log\det\Phi(\ms\gm\mr) \le \log\det(\Phi(\ms) \gm \Phi(R))\\
      &\le \half\log\det\Phi(S) + \half\log\det\Phi(R) = \half\phi(\ms)+\half\phi(\mr).
      \end{split}
  \end{equation*}
Observe that the above result extends to $\phi(X) = \log\det\bigl(B+\int_0^\infty A_{\lambda}^*XA_{\lambda}d\mu(\lambda)\bigr)$, where $\mu$ is some positive measure on $(0,\infty)$.
\end{proof}

\begin{remark}
  \label{rmk.det}
Corollary~\ref{corollary.det} may come as a surprise to some readers because $\log\det(X)$ is well-known to be concave (in the Euclidean sense), and yet $\log\det(B+A^*XA)$ turns out to be g-convex---moreover, $\log\det(X)$ is g-linear, i.e., both g-convex and g-concave.
\end{remark}

\begin{example}
  In~\citep{ssdiv} (see also \cite{chSra12,chebbi}) a dissimilarity function   to compare a pair of HPD matrices is studied. Specifically, for $X, Y \succ 0$, this function is called the S-Divergence and is defined as
  \begin{equation}
    \label{eq:18}
    S(X,Y) := \log\det\pfrac{X+Y}{2} - \half\log\det(X)-\half\log\det(Y).
  \end{equation}
  Divergence~\eqref{eq:18} proves useful in several applications~\citep{ssdiv,chSra12,chebbi}, and very recently its joint g-convexity (in both variables) was discovered~\citep{ssdiv}. Corollary~\ref{corollary.det} along with Remark~\ref{rmk.det} yield g-convexity of $S(X,Y)$ in either $X$ or $Y$ separately.
\end{example}

We are now ready to state our next key g-convexity result. A similar result was obtained in~\citep{zhang13}; our result is somewhat more general as it allows incorporation of positive linear maps. Moreover, our proof technique is completely different.
\begin{theorem}
  \label{thm.gc}
  Let $h: \pp_k \to \reals$ be nondecreasing (in L\"owner order) and g-convex. Let $r\in\set{\pm 1}$, and let $\Phi$ be a positive linear map. Then, $\phi(\ms) = h(\Phi(\ms^r))$ is g-convex.
\end{theorem}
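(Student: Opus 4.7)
The plan is to reduce g-convexity of $\phi$ to three ingredients that are already in hand: the inversion/identity $(A \gm_t B)^r = A^r \gm_t B^r$ for $r \in \set{\pm 1}$, the operator inequality $\Phi(A\gm_t B) \preceq \Phi(A)\gm_t \Phi(B)$ from Theorem~\ref{thm.linmap}, and the Löwner-monotonicity plus g-convexity of $h$. So I would fix $A, B \in \pp_d$ and $t \in [0,1]$ and chase the composition $h \circ \Phi \circ (\cdot)^r$ through these inequalities, rather than try to prove convexity directly.

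The first step is to check the geodesic-invariance $(A \gm_t B)^{-1} = A^{-1} \gm_t B^{-1}$; the case $r=1$ is trivial, and for $r=-1$ this follows from the closed form \eqref{eq.14} by a short computation which shows both sides equal $A^{-1/2}(A^{1/2}B^{-1}A^{1/2})^t A^{-1/2}$. Hence $(A \gm_t B)^r = A^r \gm_t B^r$ for both signs of $r$, so the geodesic in $\pp_d$ is mapped to a geodesic by the power map $S \mapsto S^r$.

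Next, I would apply Theorem~\ref{thm.linmap} to the HPD points $A^r$ and $B^r$ to get $\Phi(A^r \gm_t B^r) \preceq \Phi(A^r) \gm_t \Phi(B^r)$. Combining this with step one yields $\Phi((A\gm_t B)^r) \preceq \Phi(A^r)\gm_t\Phi(B^r)$. Now invoke the fact that $h$ is nondecreasing in the Löwner order to pass to values of $h$, and then use g-convexity of $h$ on the right-hand side along the geodesic connecting $\Phi(A^r)$ and $\Phi(B^r)$:
\begin{equation*}
\phi(A\gm_t B) = h\bigl(\Phi((A\gm_t B)^r)\bigr) \le h\bigl(\Phi(A^r)\gm_t\Phi(B^r)\bigr) \le (1-t)h(\Phi(A^r)) + t h(\Phi(B^r)),
\end{equation*}
which equals $(1-t)\phi(A) + t\phi(B)$, giving exactly the g-convexity inequality \eqref{eq.2}.

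There is no real obstacle here: the only slightly subtle point is making sure that the power operation is a ``geodesic isometry'' for $r = \pm 1$ so that the two notions of midpoint (in $A,B$ versus in $A^r, B^r$) coincide; once that is in place, the argument is a clean three-step sandwich of a Löwner inequality between two applications of monotonicity and g-convexity. It is worth remarking that the same proof scheme would not extend to general powers $r$ because $(A\gm_t B)^r \ne A^r \gm_t B^r$ in general, which is why the theorem restricts to $r \in \set{\pm 1}$.
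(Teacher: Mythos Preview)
Your proposal is correct and follows essentially the same approach as the paper: both use $(A\gm_t B)^r = A^r\gm_t B^r$ for $r\in\{\pm1\}$, apply Theorem~\ref{thm.linmap}, and then invoke monotonicity followed by g-convexity of $h$. The only cosmetic difference is that the paper argues via midpoint convexity ($t=1/2$) whereas you work with general $t\in[0,1]$ directly.
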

\begin{proof}
  It suffices to prove midpoint g-convexity. Since $r\in\set{\pm 1}$, $(X \gm Y)^r=X^r\gm Y^r$. Thus, applying Theorem~\ref{thm.linmap} to $\Phi$ and noting that $h$ is nondecreasing it follows that
  \begin{equation}
    \label{eq.25}
    h(\Phi(X \gm Y)^r) = h(\Phi(X^{r} \gm Y^{r})) \le h(\Phi(X^r)\gm \Phi(Y^r)).
  \end{equation}
  By assumption $h$ is g-convex, so the last inequality in~\eqref{eq.25} yields
  \begin{equation}
    \label{eq.26}
    h(\Phi(X^r)\gm \Phi(Y^r)) \le \half h(\Phi(X^r)) + \half h(\Phi(Y^r)) = \half\phi(X) + \half\phi(Y).
  \end{equation}
  Notice that if $h$ is strictly g-convex, then $\phi(\ms)$ is also strictly g-convex.
\end{proof}

\begin{example}
  Let $h=\log\det(X)$ and $\Phi(X) = B + \sum_i A_i^*XA_i$. Then, $\phi(X) = \log\det(B+\sum_iA_i^*X^rA_i)$ is g-convex. With $h(X)=\trace(X^\alpha)$ for $\alpha \ge 1$, $\trace(B + \sum_i A_i^*X^rA_i)^\alpha$ is g-convex.
\end{example}

Next, Theorem~\ref{thm.gc.sval} presents a method for creating essentially  logarithmic versions of our ``tracial'' g-convexity result Theorem~\ref{thm.trace.gc}.
\begin{theorem}
  \label{thm.gc.sval}
  If $f : \reals\to \reals$ is convex, then $\phi(\cdot):=\nlsum_{i=1}^k f(\log\lambda_i^\da(\cdot))$ is g-convex for each $1\le k \le n$. If $h : \reals \to \reals$ is nondecreasing and convex, $\phi(\cdot)=\nlsum_{i=1}^k h(|\log\lambda(\cdot)|)$ is g-convex for each $1\le k \le n$. 
\end{theorem}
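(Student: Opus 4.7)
The plan is to establish midpoint g-convexity using the log-majorization for the matrix geometric mean together with Hardy--Littlewood--Polya (HLP). Because the geodesic segment $\gamma(t)=A\gm_t B$ is continuous and midpoint convexity extends to full convexity along geodesics, it suffices to prove, for $A,B\in\pp_n$,
\begin{equation*}
\phi(A\gm B)\ \le\ \tfrac12\phi(A)+\tfrac12\phi(B),
\end{equation*}
both for $\phi(\cdot)=\sum_{i=1}^k f(\log\lambda_i^\da(\cdot))$ and for $\phi(\cdot)=\sum_{i=1}^k h(|\log\lambda_i^\da(\cdot)|)$.

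The core analytic input is the classical Ando--Hiai log-majorization
\begin{equation*}
\prod_{i=1}^k \lambda_i^\da(A\gm B)\ \le\ \prod_{i=1}^k \bigl[\lambda_i^\da(A)\lambda_i^\da(B)\bigr]^{1/2},\qquad 1\le k\le n,
\end{equation*}
with equality at $k=n$ via $\det(A\gm B)=\sqrt{\det A\,\det B}$. Taking logarithms yields the majorization $\log\lambda^\da(A\gm B)\prec \tfrac12\bigl[\log\lambda^\da(A)+\log\lambda^\da(B)\bigr]$. I would then apply HLP-style reasoning: viewed as a Schur-convex functional of the sorted log-eigenvalues, $x\mapsto\sum_{i=1}^k f(x_i^\da)$ respects this majorization, which, followed by pointwise convexity of $f$, gives
\begin{equation*}
\sum_{i=1}^k f\bigl(\log\lambda_i^\da(A\gm B)\bigr)\ \le\ \sum_{i=1}^k f\bigl(\tfrac12[\log\lambda_i^\da(A)+\log\lambda_i^\da(B)]\bigr)\ \le\ \tfrac12\sum_{i=1}^k f\bigl(\log\lambda_i^\da(A)\bigr)+\tfrac12\sum_{i=1}^k f\bigl(\log\lambda_i^\da(B)\bigr).
\end{equation*}
For the second claim, I would combine the log-majorization with the weak-majorization $|x|\prec_w|y|$ when $x\prec y$ (a consequence of Schur-convexity of the Ky Fan $k$-norm), then invoke that the nondecreasing convex $h$ respects the weak submajorization of the absolute log-eigenvalues; a second application of convexity of $h$ delivers the midpoint inequality.

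The main obstacle is the Schur-convexity of the top-$k$ summation functional $x\mapsto \sum_{i=1}^k f(x_i^\da)$ when $f$ is only convex (and not necessarily monotone). The case $k=n$ is immediate from standard HLP, but for $1\le k<n$ one needs the extra step that this partial-sum functional is Schur-convex, which I expect to handle through the Birkhoff representation of majorization by doubly stochastic matrices combined with a symmetric gauge argument. In the second statement this difficulty dissolves because $h$ is nondecreasing on $[0,\infty)$ and $|\cdot|$ is even, so the top-$k$ partial sum slots cleanly into the standard weak-majorization framework--this is precisely why the two statements are separated.
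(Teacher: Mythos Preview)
Your approach mirrors the paper's almost exactly: both start from the log-majorization $\log\lambda^\da(A\gm_t B)\prec (1-t)\log\lambda^\da(A)+t\log\lambda^\da(B)$ (the paper routes this through an intermediate $\lambda(A^{1-t}B^t)$ via a lemma of Matharu--Aujla, but the endpoint is the same as your Ando--Hiai input), then invoke Hardy--Littlewood--P\'olya for the first claim and a Ky-Fan/weak-majorization argument on $|\log\lambda|$ for the second.

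The obstacle you flag for the partial sums with $k<n$ is real, and your proposed Birkhoff/symmetric-gauge patch cannot close it, because the first assertion is actually \emph{false} for $k<n$ when $f$ is convex but not monotone. Take $n=2$, $k=1$, $f(t)=-t$, and $A=\Diag(4,1)$, $B=\Diag(1,4)$; then $A\gm B=\Diag(2,2)$ and
\[
\phi(A\gm B)=-\log 2\ >\ -\log 4=\tfrac12\phi(A)+\tfrac12\phi(B),
\]
so midpoint g-convexity fails. The paper's own proof glosses over precisely this step: it invokes HLP to pass from the majorization to the partial-sum inequality $\sum_{i=1}^k f(\log\lambda_i^\da(A\gm_t B))\le \sum_{i=1}^k f\bigl((1-t)\log\lambda_i^\da(A)+t\log\lambda_i^\da(B)\bigr)$, which HLP delivers for $k=n$ but not in general for $k<n$ absent monotonicity of $f$. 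So your diagnosis is in fact sharper than the paper's argument: the first claim stands as written only for $k=n$ (or for all $k$ if one also assumes $f$ nondecreasing, as in the paper's earlier tracial result). The second claim, with $h$ nondecreasing and convex, is correct for every $k$, and your weak-majorization sketch there matches the paper's.
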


To prove Theorem~\ref{thm.gc.sval} we will need the following majorisation.

\begin{lemma}
  \label{lem.gm.maj}
  Let $\prec_{\log}$ denote the \emph{log-majorisation} order, i.e., for $x, y \in \reals_{++}^n$ ordered nonincreasingly, we say $x \prec_{\log} y$ if $\nlprod_{i=1}^{n-1}x_i \le \nlprod_{i=1}^{n-1}y_i$ and $\prod_{i=1}^nx_i=\prod_{i=1}^ny_i$. Then, for $A, B \in \pp_n$ and $t \in [0,1]$, we have the log-majorisation between the eigenvalues:
  \begin{equation*}
    \lambda(A\gm_t B) \prec_{\log} \lambda(A^{1-t}B^t) \prec_{\log} \lambda(A^{1-t})\lambda(B^t).
  \end{equation*}
\end{lemma}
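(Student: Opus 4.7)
The plan is to prove the two log-majorizations separately and then concatenate. First I would verify well-posedness: $\lambda(A\gm_t B)$ is obviously positive since $A\gm_t B \succ 0$; the eigenvalues of $A^{1-t}B^t$ coincide with those of $A^{(1-t)/2}B^tA^{(1-t)/2}\succ 0$ via the cyclic similarity $MN\sim NM$, and are therefore also positive; and $\lambda(A)^{1-t}\lambda(B)^t$ is entrywise positive. Sort all three vectors nonincreasingly. For both inequalities, the equality at index $n$ reduces to multiplicativity of the determinant, $\det(A\gm_t B) = (\det A)^{1-t}(\det B)^t = \det(A^{1-t}B^t)$, so only the partial-product inequalities require work.

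For $\lambda(A\gm_t B) \prec_{\log} \lambda(A^{1-t}B^t)$, I would invoke the antisymmetric tensor (exterior power) $\wedge^k$. This map is a multiplicative $*$-homomorphism that commutes with inversion and with the continuous functional calculus, hence $\wedge^k(A \gm_t B) = (\wedge^k A)\gm_t(\wedge^k B)$ and $\wedge^k(A^{(1-t)/2}B^t A^{(1-t)/2}) = (\wedge^k A)^{(1-t)/2}(\wedge^k B)^t(\wedge^k A)^{(1-t)/2}$. Combining $\prod_{i=1}^k \lambda_i(M) = \lambda_1(\wedge^k M)$ (for positive $M$) with the similarity $A^{1-t}B^t \sim A^{(1-t)/2}B^tA^{(1-t)/2}$ reduces the partial-product inequality to the top-eigenvalue statement $\lambda_1(X \gm_t Y) \le \lambda_1(X^{1-t}Y^t)$ for $X = \wedge^k A$, $Y = \wedge^k B$. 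This top-eigenvalue inequality is the classical Ando--Hiai bound, which I would invoke as a known result.

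For $\lambda(A^{1-t}B^t) \prec_{\log} \lambda(A)^{1-t}\lambda(B)^t$, the cleanest route is Horn's multiplicative singular-value log-majorization: for arbitrary $X, Y$, $|\lambda(XY)| \prec_{\log} \sigma(X)\sigma(Y)$. Applying this to $X = A^{1-t}$ and $Y = B^t$---both HPD, so that $\sigma(X) = \lambda(A)^{1-t}$ and $\sigma(Y) = \lambda(B)^t$ when arranged decreasingly---and using that $\lambda(A^{1-t}B^t)$ is already positive so the absolute values drop, gives the desired log-majorization.

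The main obstacle is the Ando--Hiai top-eigenvalue bound $\lambda_1(X\gm_t Y) \le \lambda_1(X^{1-t}Y^t)$ underlying the first step. Although standard, a fully self-contained derivation is nontrivial; the cleanest route I know uses either Araki's log-majorization inequality or a careful application of the integral representation~\eqref{eq.19} of $X\gm_t Y$ already exploited in the proof of Theorem~\ref{thm.linmap}, combined with operator-norm bounds on each parallel-sum integrand. The second log-majorization, by contrast, is a direct and standard consequence of Horn's classical inequality.
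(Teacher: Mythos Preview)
Your proposal is correct and follows essentially the same route as the paper. The paper cites \citet{maAu12} directly for the first log-majorisation and then uses the antisymmetric tensor power trick combined with the scalar bound $\lambda_1(AB)\le\sigma_1(A)\sigma_1(B)$ for the second; you deploy the exterior-power reduction on the first majorisation (reducing to a top-eigenvalue Ando--Hiai/Araki-type bound, which is precisely what underlies the Matharu--Aujla result) and invoke Horn's inequality for the second (which is exactly the exterior-power argument the paper spells out). The only real difference is bookkeeping: which step gets the explicit $\wedge^k$ machinery and which gets a named citation.
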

\begin{proof}
  The first majorisation follows from a recent result of~\citet{maAu12}. The second one follows easily from $\lambda_1(AB) \le \sigma_1(AB) \le \sigma_1(A)\sigma_1(B) = \lambda_1(A)\lambda_1(B)$ (the final equality holds since $A, B \in \pp_n$). Apply this inequality to the antisymmetric (Grassmann) exterior product $\wedge^k (AB)$, since $\sigma_1(\wedge^k(AB)) = \nlprod_{j=1}^k \sigma_j(AB)$ (see e.g.,~\citep[I; IV.2]{bhatia97}; then we obtain $\lambda_1(\wedge^k (AB)) \le \sigma_1(\wedge^k(AB))$. Now set $A\gets A^{1-t}$, $B\gets B^t$, and use the multiplicativity $\wedge^k(AB)=\wedge^kA\wedge^kB$ to complete the proof.
\end{proof}


\begin{proof}[\emph{Theorem~\ref{thm.gc.sval}}]
  From Lemma~\ref{lem.gm.maj} we have the majorisation $$\lambda(A\gm_t B) \prec_{\log} \lambda(A^{1-t}B^{t}) \prec_{\log}\lambda(A^{1-t})\lambda(B^t);$$ on taking logarithms, this majorisation may be written equivalently as
  \begin{equation}
    \label{eq:8}
    \log\lambda(A\gm_t B) \prec (1-t)\log\lambda(A)+t\log\lambda(B).
  \end{equation}
  Applying a classical result of~\citep{haLiPo} on majorisation under convex functions, from~\eqref{eq:8} we obtain the inequality
  \begin{equation*}
    \begin{split}
      \phi(A\gm_t B) &= \nlsum_{i=1}^k f(\log\lambda_i(A\gm_t B)) \le \nlsum_{i=1}^k  f\left((1-t)\log\lambda_i(A)+t\log\lambda_i(B)\right)\\
      &\le (1-t)\nlsum_{i=1}^k f(\log\lambda_i(A)) + t\nlsum_{i=1}^k f(\log\lambda_i(B))\\
      &= (1-t)\phi(A)+t\phi(B).
    \end{split}
  \end{equation*}
  Applying the Ky-Fan norm $\nlsum_{i=1}^k \sigma_i(\cdot)$---that is, the sum of top-$k$ singular values---to \eqref{eq:8}, we obtain the weak-majorisation (see e.g., \citep[II]{bhatia97} for more on majorisation):
  \begin{equation}  
    \label{eq:9}
    \sigma(\log A\gm_t B) \prec_w \sigma\left[(1-t)\log\lambda(A)+t\log\lambda(B)\right] \prec_w (1-t)\sigma(\log A)+t\sigma(\log B).
  \end{equation}
  Since $h$ is monotone and convex, \eqref{eq:9} yields g-convexity of $\nlsum_{i=1}^kh(|\log\lambda_i(\cdot)|)$. 
\end{proof}

\begin{corr}
  \label{cor.norm.gc}
  Let $\Phi: \reals^n\to \reals_+$ be a symmetric gauge function (i.e., $\Phi$ is a norm, invariant to permutation and sign changes). Also, let $X \in \text{GL}_n(\C)$. Then, $\Phi(\sigma(\log(X^*AX))$ is g-convex.
\end{corr}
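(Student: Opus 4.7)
The plan is to reduce to the case $X = I$ via congruence invariance of the geodesic, and then read off g-convexity directly from the weak majorisation already established inside the proof of Theorem~\ref{thm.gc.sval}.

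First I would verify that for every $G \in \text{GL}_n(\C)$ the congruence $A \mapsto G^*AG$ commutes with the weighted geometric mean:
\begin{equation*}
G^*(A \gm_t B) G = (G^*AG) \gm_t (G^*BG).
\end{equation*}
This is a one-line consequence of the integral representation~\eqref{eq.19}: substituting $(G^*AG)^{-1} = G^{-1}A^{-1}G^{-*}$ into the integrand and pulling $G,G^*$ out of each inverse turns the right-hand side into $G^*(A\gm_t B)G$. Applied with $G = X$, this reduces g-convexity of the map $A \mapsto \Phi(\sigma(\log(X^*AX)))$ to g-convexity of the simpler map $C \mapsto \Phi(\sigma(\log C))$ on $\pp_n$.

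Next I would plug $A \gm_t B$ into the weak majorisation~\eqref{eq:9} already derived in the proof of Theorem~\ref{thm.gc.sval},
\begin{equation*}
\sigma(\log(A\gm_t B)) \prec_w (1-t)\sigma(\log A) + t\sigma(\log B),
\end{equation*}
and exploit the fact that any symmetric gauge function on $\reals_+^n$ is isotone with respect to weak majorisation (see e.g.\ \citep[Thm.~IV.2.2]{bhatia97}). Applying $\Phi$ to both sides and then using its norm properties (triangle inequality plus positive homogeneity) yields
\begin{equation*}
\Phi(\sigma(\log(A\gm_t B))) \le \Phi\bigl((1-t)\sigma(\log A)+t\sigma(\log B)\bigr) \le (1-t)\Phi(\sigma(\log A)) + t\Phi(\sigma(\log B)),
\end{equation*}
which is exactly the sought g-convexity of $C \mapsto \Phi(\sigma(\log C))$.

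The only mildly non-routine ingredient is the congruence-invariance identity of Step~1; everything else is a straightforward assembly of Theorem~\ref{thm.gc.sval} with textbook properties of symmetric gauge functions. Consequently I do not anticipate any real obstacle: the corollary is essentially a restatement of the second half of Theorem~\ref{thm.gc.sval} in unitarily invariant-norm language, packaged with the isometric action of $\text{GL}_n(\C)$ on the affine-invariant geometry of $\pp_n$.
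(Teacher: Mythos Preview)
Your proposal is correct and follows essentially the same approach as the paper's proof: the paper simply notes the congruence identity $X^*(A\gm B)X = (X^*AX)\gm(X^*BX)$ and then says ``now apply Theorem~\ref{thm.gc.sval}'', leaving implicit exactly the weak-majorisation step~\eqref{eq:9} and the isotonicity of symmetric gauge functions that you spell out. Your write-up is a faithful expansion of that one-line argument; the only cosmetic difference is that you justify congruence invariance via the integral representation~\eqref{eq.19} rather than quoting it as a known fact.
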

\begin{proof}
  Note that $X^*(A\gm B)X = (X^*AX)\gm(X^*BX)$; now apply Theorem~\ref{thm.gc.sval}.
\end{proof}
\begin{example}
  \label{ex.riem}
  Consider $\riem(A,X) := \frob{\log(X^{-1/2}AX^{-1/2})}$ the Riemannian distance between $A, X \in \pp_d$~\citep[Ch. 6]{bhatia07}. Since $\|\log\lambda(X^{-1/2}AX^{-1/2})\|_2=\|\sigma(\log X^{-1/2}AX^{-1/2})\|_2$, it follows from Corollary~\ref{cor.norm.gc} that $A \mapsto \riem(A,X)$ is g-convex (see also~\citep[Cor.~6.1.11]{bhatia07}).

This immediately shows that computing the Fr\'echet (Karcher) mean and median of HPD matrices (also known as geometric mean and median of HPD matrices, respectively) are g-convex optimisation problems; formally, these problems are given by
\begin{align*}
  \min_{X \succ 0}\quad &\nlsum_{i=1}^m w_i\riem(X,A_i),\qquad(\text{Geometric Median}),\\
  \min_{X \succ 0}\quad &\nlsum_{i=1}^m w_i\riem^2(X,A_i),\qquad(\text{Geometric Mean}),
\end{align*}
where $\nlsum_i w_i=1$, $w_i \ge 0$, and $A_i \succ 0$ for $1\le i \le m$. The latter problem has received great interest in the literature~\citep{moakher,bhatia07,bhatia11,ssdiv,bruno,jeuVaVa,nieBha13}, and its optimal solution is unique owing to the (strict) g-convexity of its objective. The former problem is less well-known but in some cases proves more favourable~\citep{barber,nieBha13}---again, despite the nonconvexity of the objective, its g-convexity ensures every local solution is global.
\end{example}

We conclude this section by using Lemma~\ref{lem.gm.maj} to prove the following log-convexity analogue to Theorem~\ref{thm.gc.sval} (\emph{cf.} the scalar case studied in \citep[Prop.~2.4]{nicu00}).
\begin{theorem}
  \label{thm.logc.svals}
  Let $f(x) = \nlsum_{j \ge 0} a_jx^j$ be real analytic with $a_j\ge0\ \text{for}\ j \ge 0$ and radius of convergence $R$. Then, $\phi(\cdot) = \nlprod_{i=1}^k f(\lambda_i(\cdot))$ is log-g-convex on matrices with spectrum in $(0,R)$.
\end{theorem}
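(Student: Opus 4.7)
The plan is to establish log-g-convexity, i.e., for each $1\le k\le n$,
\[
\nlsum_{i=1}^k \log f(\lambda_i^\da(A\gm_t B)) \le (1-t)\nlsum_{i=1}^k \log f(\lambda_i^\da(A)) + t\nlsum_{i=1}^k \log f(\lambda_i^\da(B)),
\]
which on exponentiating yields $\phi(A\gm_t B) \le \phi(A)^{1-t}\phi(B)^t$. The central auxiliary object is $g(s) := \log f(e^s) = \log\nlsum_{j\ge 0} a_j e^{js}$, defined for $s < \log R$. Since $a_j\ge 0$, the function $g$ is a (nonnegatively weighted) log-sum-exp and hence convex; differentiating the series termwise shows $g'(s)\ge 0$, so $g$ is also nondecreasing. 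This is the only place the hypothesis $a_j\ge 0$ is used --- it is precisely what makes $\log f$ convex as a function of $\log x$, so that the log-majorisation supplied by Lemma~\ref{lem.gm.maj} can be exploited.

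The key step is then to take logarithms throughout $\lambda(A\gm_t B)\prec_{\log}\lambda(A^{1-t})\lambda(B^t)$ from Lemma~\ref{lem.gm.maj}, which (using equality of the full products on both sides, guaranteed by $\det(A\gm_t B) = \det(A)^{1-t}\det(B)^t$) yields the full majorisation
\[
\log\lambda^\da(A\gm_t B) \prec (1-t)\log\lambda^\da(A) + t\log\lambda^\da(B).
\]
Applying the top-$k$ form of the Hardy--Littlewood--P\'olya inequality \citep{haLiPo} with the convex nondecreasing function $g$ --- exactly as in the proof of Theorem~\ref{thm.trace.gc} --- gives
\[
\nlsum_{i=1}^k g(\log\lambda_i^\da(A\gm_t B)) \le \nlsum_{i=1}^k g\bigl((1-t)\log\lambda_i^\da(A) + t\log\lambda_i^\da(B)\bigr),
\]
where the monotonicity of $g$ ensures that no re-sorting is needed on either side, since applying $g$ coordinatewise preserves the decreasing order.

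The proof is finished by appealing to the pointwise convexity of $g$:
\[
g\bigl((1-t)\log\lambda_i^\da(A) + t\log\lambda_i^\da(B)\bigr) \le (1-t)\log f(\lambda_i^\da(A)) + t\log f(\lambda_i^\da(B));
\]
summing over $i=1,\ldots,k$ and chaining with the previous inequality yields the claim. The main obstacle I anticipate is purely bookkeeping: one must invoke HLP in its top-$k$ rather than full-sum form, and use the monotonicity of $g$ to keep the rearrangements aligned --- issues that are already handled transparently in the proof of Theorem~\ref{thm.trace.gc}. The conceptual content is light: the hypothesis of nonnegative Taylor coefficients is nothing more than a convenient packaging of convexity of $\log f\circ\exp$, which pairs perfectly with the log-majorisation of Lemma~\ref{lem.gm.maj}.
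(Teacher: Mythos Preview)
Your proof is correct and, in fact, slightly more careful than the paper's. The two arguments share the same backbone --- the log-majorisation of Lemma~\ref{lem.gm.maj} --- but diverge at the scalar step. The paper works only at the midpoint $t=\tfrac12$ and invokes Cauchy--Schwarz on the power series, namely $f(\sqrt{ab})=\sum_j a_j a^{j/2}b^{j/2}\le \sqrt{f(a)}\sqrt{f(b)}$, to obtain $\phi(A\gm B)\le\sqrt{\phi(A)\phi(B)}$ directly. You instead observe that $g(s)=\log f(e^s)$ is a nonnegatively weighted log-sum-exp, hence convex and nondecreasing, which recasts the theorem as an immediate corollary of Theorem~\ref{thm.gc.sval} and handles all $t\in[0,1]$ at once. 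Your route is more conceptual --- it makes transparent that ``$a_j\ge 0$'' is exactly the hypothesis guaranteeing convexity of $\log f\circ\exp$ --- while the paper's Cauchy--Schwarz argument is more self-contained but restricted to the midpoint (relying on continuity for the general case). Your remark that the convex combination $(1-t)\log\lambda_i^\da(A)+t\log\lambda_i^\da(B)$ is already decreasing in $i$, so that monotone $g$ preserves the ordering, is a point the paper glosses over.
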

\begin{proof}
  It suffices to verify that $\log\phi(A\gm B) \le \half\log\phi(A)+\half\log\phi(B)$. Since $f' \ge 0$, we have
  \begin{equation*}
    \begin{split}
      \phi(A\gm B) &= \nlprod_{i=1}^k f(\lambda_i(A\gm B)) \le \nlprod_{i=1}^k f(\lambda_i^{1/2}(A)\lambda_i^{1/2}(B))\qquad (\text{using Lemma~\ref{lem.gm.maj}})\\
      &\le \nlprod_{i=1}^k\sqrt{f(\lambda_i(A))}\sqrt{f(\lambda_i(B))}\qquad(\text{Cauchy-Schwarz on power-series of $f$})\\
      &=\sqrt{\phi(A)}\sqrt{\phi(B)}.
    \end{split}
  \end{equation*}
  Taking logarithms, we see that $\phi(\cdot)$ is log-g-convex (and hence also g-convex).
\end{proof}

\begin{example}
  Some examples of $f$ that satisfy conditions of Theorem~\ref{thm.logc.svals} are $\exp$, $\sinh$ on $(0,\infty)$, $-\log(1-x)$ and $(1+x)/(1-x)$ on $(0,1)$; see~\citep{nicu00} for more examples.
\end{example}

\subsection{Multivariable g-convexity}
\label{sec:multi.gc}
We describe now an extension of g-convexity to multiple matrices; a two-variable version was also partially explored in~\citep{wie12,zhang13}, though under a different name. We begin our multivariable extension by recalling a few basic properties of the Kronecker product~\citep{vanloan00}.

\begin{lemma}
  \label{lem.kron}
  Let $A \in \reals^{m \times n}$, $B \in \reals^{p \times q}$. Then, $A\kron B := [a_{ij}B] \in \reals^{mp\times nq}$ satisfies:
  \begin{enumerate}[(i)]
    \setlength{\itemsep}{0pt}
  \item $(A\kron B)^* = A^* \kron B^*$
  \item $(A\kron B)^{-1}=\inv{A}\kron\inv{B}$
  \item Assuming that the respective products exist,  \begin{equation}\label{eq.kron.ac}(AC \kron BD) = (A\kron B)(C\kron D)\end{equation}
  \item $A \kron (B\kron C) = (A\kron B)\kron C$
  \item If $A=UD_1U^*$ and $B=VD_2V^*$ then $(A\kron B) = (U\kron V)(D_1\kron D_2)(U\kron V)^*$.
  \item Let $A, B \succeq 0$, and $t \in \reals$; then 
    \begin{equation}
      \label{eq.10}
      (A\kron B)^t = A^t \kron B^t.  
    \end{equation}
  \item If $A \succeq B$ and $C \succeq D$, then $(A\kron C) \succeq (B \kron D)$.
  \end{enumerate}
\end{lemma}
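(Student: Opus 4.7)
All seven claims are standard facts about the Kronecker product and none requires a deep idea; the plan is to dispose of the algebraic identities (i)--(v) directly from the definition, then bootstrap to the analytic fact (vi) and finally the order-theoretic fact (vii). I will maintain block-entry bookkeeping $(A\kron B)_{(i,j),(k,l)} = a_{ik}b_{jl}$ throughout, since nearly every identity reduces to checking equality of individual block entries.

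First I would prove (i) by direct inspection: conjugate-transposing swaps the outer $(i,j)\leftrightarrow(k,l)$ indices and conjugates the scalar $a_{ik}b_{jl}$, yielding exactly the block $(k,i),(l,j)$ entry of $A^*\kron B^*$. Next I would establish (iii), the mixed-product property, by computing the $((i,j),(k,l))$ block of $(A\kron B)(C\kron D)$ as a sum over an intermediate block index $(p,q)$ that factors as $(\sum_p a_{ip}c_{pk})(\sum_q b_{jq}d_{ql})$, which is exactly $(AC)_{ik}(BD)_{jl}$. With (iii) in hand, (ii) is immediate since $(A\kron B)(A^{-1}\kron B^{-1}) = (AA^{-1})\kron(BB^{-1}) = I\kron I = I$. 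Claim (iv) again follows from the definition: both triple products have block entry $a_{ik}b_{jl}c_{mn}$ under the natural re-indexing.

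For (v) I would combine (i) and (iii): starting from $A\kron B = (UD_1U^*)\kron(VD_2V^*)$, apply (iii) twice to factor as $(U\kron V)(D_1\kron D_2)(U^*\kron V^*)$, then use (i) to rewrite $U^*\kron V^*=(U\kron V)^*$. Note that $D_1\kron D_2$ is diagonal with nonnegative entries when $A,B\succeq 0$, and $U\kron V$ is unitary (from (i) and (iii): $(U\kron V)^*(U\kron V)=U^*U\kron V^*V=I$). This immediately implies that the Kronecker product of two positive semidefinite matrices is positive semidefinite, a fact I will use in (vii). For (vi), the spectral decomposition just derived identifies $(A\kron B)^t$, defined via functional calculus, with $(U\kron V)(D_1\kron D_2)^t(U\kron V)^*$; since $D_1\kron D_2$ is diagonal, its $t$-th power entrywise is $D_1^t\kron D_2^t$, and one more application of (iii) gives $A^t\kron B^t$.

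Finally for (vii), the key trick is to insert a telescoping term:
\begin{equation*}
A\kron C - B\kron D = A\kron(C-D) + (A-B)\kron D.
\end{equation*}
Each summand is a Kronecker product of two positive semidefinite matrices (here one must implicitly assume $A, D\succeq 0$, which is the natural context given $A\succeq B$ and $C\succeq D$ in $\pp_d$), so by the consequence of (v) noted above, each summand is positive semidefinite and hence so is the sum. The only subtlety, and the one place where I would be careful in writing out the proof, is making the positivity hypothesis on $A$ and $D$ explicit, since the identity itself is purely algebraic but the conclusion $A\kron C\succeq B\kron D$ requires those factors to be PSD for the tensor-product step to apply.
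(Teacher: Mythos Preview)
Your proposal is correct and follows essentially the same dependency structure as the paper's own proof, which is extremely terse (it merely records that (i)--(iii) are classical, that (v) follows from (i) and the mixed-product rule, and that (vi) and (vii) follow from (v)). Your write-up simply fleshes out these dependencies; in particular your telescoping argument for (vii) is the standard way one extracts (vii) from the PSD-preservation consequence of (v), and your flag about the implicit hypothesis $A,D\succeq 0$ is a worthwhile clarification that the paper omits.
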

\begin{proof}
  Identities (i)--(iii) are classic; (v) follows easily from (i) and (iv), while (vi) and (vii) follow from (v); (viii) is an easy exercise. 
\end{proof}

We will need the following easy but key result on tensor products of geometric means.
\begin{lemma}
  \label{lem.gm.kron}
  Let $A,B \in \pd_{d_1}$ and $C,D\in \pd_{d_2}$. Then,
  \begin{equation}
    \label{eq.9}
    (A\gm B)\kron(C\gm D) = (A\kron C)\gm (B \kron D).
  \end{equation}
\end{lemma}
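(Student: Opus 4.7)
The plan is to verify the identity by direct expansion: apply the definition
$X \gm Y = X^{1/2}(X^{-1/2}YX^{-1/2})^{1/2}X^{1/2}$
to both factors on the left, then use parts (iii), (vi) of Lemma~\ref{lem.kron} to collapse the resulting tensor product into the defining expression of $(A\kron C)\gm(B\kron D)$. No deeper tool is needed; the whole argument is an algebraic identity driven by the multiplicativity of $\kron$ with respect to products, inverses, and HPD fractional powers.

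More concretely, I would first rewrite the left-hand side as
\begin{equation*}
(A\gm B)\kron(C\gm D) = \bigl[A^{1/2}M^{1/2}A^{1/2}\bigr] \kron \bigl[C^{1/2}N^{1/2}C^{1/2}\bigr],
\end{equation*}
where $M := A^{-1/2}BA^{-1/2}$ and $N := C^{-1/2}DC^{-1/2}$. Using the product rule~\eqref{eq.kron.ac} twice (bracketing as $(A^{1/2}\kron C^{1/2})(M^{1/2}\kron N^{1/2})(A^{1/2}\kron C^{1/2})$), then invoking~\eqref{eq.10} on each outer factor to get $A^{1/2}\kron C^{1/2} = (A\kron C)^{1/2}$, reduces the task to showing
\begin{equation*}
M^{1/2}\kron N^{1/2} = \bigl[(A\kron C)^{-1/2}(B\kron D)(A\kron C)^{-1/2}\bigr]^{1/2}.
\end{equation*}

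For this last identity I would apply~\eqref{eq.10} in the reverse direction to write $M^{1/2}\kron N^{1/2} = (M\kron N)^{1/2}$, and then unfold $M\kron N$ using~\eqref{eq.kron.ac} together with (ii) and (vi) of Lemma~\ref{lem.kron} to obtain
\begin{equation*}
M\kron N = (A^{-1/2}\kron C^{-1/2})(B\kron D)(A^{-1/2}\kron C^{-1/2}) = (A\kron C)^{-1/2}(B\kron D)(A\kron C)^{-1/2}.
\end{equation*}
Substituting back collapses the expression into $(A\kron C)\gm(B\kron D)$, which is the right-hand side of~\eqref{eq.9}.

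The only thing to be careful about is the consistent use of~\eqref{eq.10} for the fractional powers $\tfrac12$ and $-\tfrac12$, which is legitimate since $A,B,C,D$ are HPD and hence so are their Kronecker products. I do not anticipate any genuine obstacle; the result is essentially a bookkeeping exercise in Kronecker algebra, and the one place where positive-definiteness is essential is precisely where~\eqref{eq.10} is invoked.
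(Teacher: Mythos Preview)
Your proposal is correct and follows essentially the same approach as the paper: both proofs expand the geometric means via the definition $X\gm Y = X^{1/2}(X^{-1/2}YX^{-1/2})^{1/2}X^{1/2}$, apply the mixed-product rule~\eqref{eq.kron.ac} to factor the Kronecker product, and then use~\eqref{eq.10} for the fractional powers $\pm\tfrac12$ to collapse everything into $(A\kron C)\gm(B\kron D)$. The paper merely abbreviates your $M^{1/2}$ and $N^{1/2}$ as $\gamma(A,B)$ and $\gamma(C,D)$ and first isolates the sub-identity $\gamma(A,B)\kron\gamma(C,D)=\gamma(A\kron C,B\kron D)$, but the logical content is identical.
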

\begin{proof}
  Denote $\gamma(X,Y) := (X^{-1/2}YX^{-1/2})^{1/2}$. Observe that
  \begin{align*}
    \gamma(A,B)\kron \gamma(C,D) &= (A^{-1/2}BA^{-1/2})^{1/2}\kron (C^{-1/2}DC^{-1/2})^{1/2}\\
    &= [(A^{-1/2}BA^{-1/2}) \kron (C^{-1/2}DC^{-1/2})]^{1/2}\\
    &= [(A\kron C)^{-1/2}(B\kron D)(A\kron C)^{-1/2}]^{1/2}\\
    &= \gamma(A\kron C, B\kron D),
  \end{align*}
  where the second equality follows from Lemma~\ref{lem.kron}-(iii), while the third one from Lemma~\ref{lem.kron}-(ii),(iii), and (vi). A similar manipulation then shows that
  \begin{align*}
    (A\gm B)\kron(C\gm D) &= (A^{1/2}\gamma(A,B)A^{1/2})\kron(C^{1/2}\gamma(C,D)C^{1/2}),\\
    &= (A^{1/2}\kron C^{1/2})(\gamma(A,B)\kron \gamma(C,D))(A^{1/2}\kron C^{1/2})\\
    &= (A\kron C)^{1/2}(\gamma(A,B)\kron\gamma(C,D))(A\kron C)^{1/2}\\
    &= (A\kron C)^{1/2}\gamma(A\kron C, B  \kron D)(A\kron C)^{1/2}\\
    &= (A\kron C)\gm (B \kron D),
  \end{align*}
  which concludes the proof.
\end{proof}

Lemma~\ref{lem.gm.kron} inductively extends to the multivariable case, so that
\begin{equation}
  \label{eq.11}
  \nlpkron_{i=1}^m (A_i\gm B_i) = \left(\nlpkron_{i=1}^m A_i\right)\gm \left(\nlpkron_{i=1}^m B_i\right).
\end{equation}
Using identity~\eqref{eq.11} we thus obtain the following multivariate analogue to Theorem~\ref{thm.gc.sval}.

\begin{theorem}
  \label{thm.multi.spec}
  Let $h$ be an increasing convex function on $\reals_+\to \reals$. Then, the map $\nlprod_{i=1}^m\trace h(X_i)$ is jointly g-convex, i.e., $\trace h(\nlpkron_{i=1}^mX_i)$ is g-convex in its variables.
\end{theorem}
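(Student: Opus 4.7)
My plan is to reduce the multivariate claim to the single-variable g-convexity result already established in Theorem~\ref{thm.trace.gc}. The pivot is Lemma~\ref{lem.gm.kron}, or rather its inductive extension~\eqref{eq.11}, which says that the Kronecker product commutes with the matrix geometric mean. If I can push this from the midpoint to the full $t$-weighted geodesic, then the joint geodesic in $(X_1,\ldots,X_m)$ maps, under $\nlpkron_i X_i$, to a geodesic in $\pd_{d_1\cdots d_m}$; and on the single manifold $\pd_N$ with $N=d_1\cdots d_m$, $\trace h(\cdot)$ is already known to be g-convex (take $k=N$ in Theorem~\ref{thm.trace.gc}, using that $h$ is increasing and convex).

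First, I would verify that~\eqref{eq.11} holds for arbitrary $t\in[0,1]$, not just $t=\tfrac12$. Revisiting the proof of Lemma~\ref{lem.gm.kron}, the only algebraic fact used is Lemma~\ref{lem.kron}-(vi), namely $(M\kron N)^s = M^s\kron N^s$ for real $s$. Replacing every $(\,\cdot\,)^{1/2}$ by $(\,\cdot\,)^{t}$ (and correspondingly $A^{1/2}$ by $A^{(1-t)/?}$ in the sandwich form) and running the manipulation again yields
\begin{equation*}
\nlpkron_{i=1}^m (X_i\gm_t Y_i) = \Bigl(\nlpkron_{i=1}^m X_i\Bigr)\gm_t\Bigl(\nlpkron_{i=1}^m Y_i\Bigr),\qquad t\in[0,1].
\end{equation*}

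Second, set $M(t):=\nlpkron_{i=1}^m (X_i\gm_t Y_i)$, which by the display above is the geodesic in $\pd_N$ joining $M(0)=\nlpkron_i X_i$ to $M(1)=\nlpkron_i Y_i$. Since $h:\reals_+\to\reals$ is monotonically increasing and convex, Theorem~\ref{thm.trace.gc} with $k=N$ gives g-convexity of $\trace h(\cdot)$ on $\pd_N$, hence
\begin{equation*}
\trace h\bigl(M(t)\bigr)\ \le\ (1-t)\trace h\bigl(M(0)\bigr) + t\trace h\bigl(M(1)\bigr),
\end{equation*}
which is precisely joint g-convexity of $(X_1,\ldots,X_m)\mapsto \trace h(\nlpkron_i X_i)$. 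The equivalence with the product form $\nlprod_i \trace h(X_i)$ is then the statement that for the special $h$ (e.g.\ $h(x)=x^p$ or $h(x)=x$) for which $h$ distributes over Kronecker products, $\trace h(\nlpkron_i X_i)=\nlprod_i\trace h(X_i)$ by multiplicativity of trace on tensor products.

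The only non-routine point is the first step — upgrading Lemma~\ref{lem.gm.kron} from $t=\tfrac12$ to general $t$ — and this is really just bookkeeping, since the proof in the excerpt is already written in a form that depends only on the fractional-power identity $(M\kron N)^s=M^s\kron N^s$. After that, everything is an immediate application of previously proven results, and no new matrix inequality is required.
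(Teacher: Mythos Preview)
Your approach is essentially the paper's own: both use identity~\eqref{eq.11} to reduce the multivariate claim to single-variable g-convexity of $\trace h(\cdot)$ on $\pd_N$; the paper simply inlines the argument of Theorem~\ref{thm.trace.gc} at the midpoint, whereas you invoke that theorem as a black box after extending~\eqref{eq.11} to general $t$. The extension to arbitrary $t$ is correct but unnecessary (midpoint convexity plus continuity suffices, as used throughout the paper), and in your parenthetical the outer sandwich factors stay as $A^{1/2}$---only the inner exponent changes to $t$.
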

\begin{proof}
Let $(A_1,B_1),\ldots,(A_m,B_m)$ be pairs of HPD matrices of arbitrary sizes (such that for each $i$, $A_i,B_i$ are of the same size. Let $j$ index the eigenvalues of the tensor product $\nlpkron_{i=1}^m (A_i\gm B_i)$. Then, starting with identity~\eqref{eq.11} we obtain
  \begin{align*}
    \lambda_j\left[\nlpkron_{i=1}^m (A_i\gm B_i)\right] &= \lambda_j\left[\left(\nlpkron_{i=1}^m A_i\right)\gm \left(\nlpkron_{i=1}^m B_i\right)\right] \le \half\lambda_j\left[\nlpkron_{i=1}^m A_i+\nlpkron_{i=1}^m B_i\right]\\
    \trace h\left(\nlpkron_{i=1}^m (A_i\gm B_i)\right)&=\sum_j h\left(\lambda_j\left[\nlpkron_{i=1}^m (A_i\gm B_i)\right]\right) 
    \le \sum_jh\left(\half\lambda_j\left[\nlpkron_{i=1}^m A_i+\nlpkron_{i=1}^m B_i\right]\right)\\
    &\le \half\sum_j h(\lambda_j(\nlpkron_{i=1}^m A_i)) + \half\sum_j h(\lambda_j(\nlpkron_{i=1}^m B_i))\\
    &=\half\trace h(\nlpkron_{i=1}^m A_i) + \half \trace h(\nlpkron_{i=1}^m B_i)\\
    &=\half\prod_{i=1}^m\trace h(A_i) + \half\prod_{i=1}^m\trace h(B_i),
  \end{align*}
  which shows the desired multivariable g-convexity of the map $\trace h(\nlpkron_{i=1}^mX_i)$.
\end{proof}

Again, using~\eqref{eq.11} we obtain the following multivariate analogue to Theorem~\ref{thm.linmap}.
\begin{theorem}
  \label{thm.multi.linmap}
  Let $(X_1,Y_1),\ldots,(X_m,Y_m)$ be pairs of HPD matrices of arbitrary sizes (such that for each $i$, $X_i,Y_i$ are of the same size). Let $\Phi_i : \Hc_i \to \Hc_i'$ be a positive linear map for each $i$, and $\Phi$ the \emph{positive multilinear  map} 
defined by  $\Phi \equiv \otimes_{i=1}^m A_i \mapsto \otimes_{i=1}^m \Phi_i(A_i)$. Then, 
  \begin{equation}
    \label{eq.12}
    \Phi(\otimes_{i=1}^m (X_i\gm Y_i)) \preceq \Phi(\otimes_i X_i) \gm \Phi(\otimes_i Y_i).
  \end{equation}
\end{theorem}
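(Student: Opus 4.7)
The plan is to reduce this multivariable statement to the single-variable Theorem~\ref{thm.linmap} via the tensor-product identity~\eqref{eq.11}. First I would invoke~\eqref{eq.11} to rewrite the argument of $\Phi$ on the left-hand side of~\eqref{eq.12} as a \emph{two-factor} geometric mean living in the ambient tensor-product space:
\begin{equation*}
  \nlpkron_{i=1}^m (X_i \gm Y_i) \;=\; \Bigl(\nlpkron_{i=1}^m X_i\Bigr) \gm \Bigl(\nlpkron_{i=1}^m Y_i\Bigr).
\end{equation*}
This step collapses the multivariable nature of the problem to the one already handled by Theorem~\ref{thm.linmap}.

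Next, $\Phi$ is linear (extended from its specification on rank-one tensors $\otimes_i A_i \mapsto \otimes_i \Phi_i(A_i)$) and, by hypothesis, positive as an operator on HPD matrices over $\Hc_1\otimes\cdots\otimes\Hc_m$. Applying Theorem~\ref{thm.linmap} with $A=\otimes_i X_i$, $B=\otimes_i Y_i$ and $t=1/2$ yields
\begin{equation*}
  \Phi\Bigl(\bigl(\nlpkron_i X_i\bigr)\gm \bigl(\nlpkron_i Y_i\bigr)\Bigr) \;\preceq\; \Phi\bigl(\nlpkron_i X_i\bigr)\gm \Phi\bigl(\nlpkron_i Y_i\bigr),
\end{equation*}
and chaining with the previous display gives~\eqref{eq.12}.

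The only delicate point is the positivity of the linearly extended $\Phi$ on the tensor product. The hypothesis of the theorem declares $\Phi$ to be a ``positive multilinear map,'' so this is taken as given; if one wished to derive it, one would argue by spectrally decomposing each HPD input as a sum of rank-one positives, pushing the decomposition through the multilinear definition, and invoking Lemma~\ref{lem.kron}-(vii) together with positivity of each $\Phi_i$ to certify that the image is positive. Aside from this bookkeeping, the proof is essentially a two-line composition of~\eqref{eq.11} with Theorem~\ref{thm.linmap}, so I do not anticipate any genuine obstacle beyond stating the reduction cleanly.
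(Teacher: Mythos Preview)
Your reduction via~\eqref{eq.11} is different from the paper's route, and the ``delicate point'' you flag is a genuine gap. To apply Theorem~\ref{thm.linmap} to $\Phi$ you need $\Phi=\otimes_i\Phi_i$ to be a (strictly) positive linear map on the \emph{full} tensor product space, not merely on rank-one tensors. This is \emph{not} automatic from positivity of each $\Phi_i$: the tensor product of positive maps need not be positive (the transpose map $T$ on $2\times 2$ matrices is positive, yet $T\otimes I$---the partial transpose---is not). Your suggested derivation via spectral decomposition also fails: writing an HPD tensor $M=\sum_k v_kv_k^*$ and then expanding each $v_k=\sum_j a_j^k\otimes b_j^k$ gives cross terms $a_j^k {a_l^k}^*\otimes b_j^k{b_l^k}^*$ that are not even Hermitian, so you cannot ``push through'' positivity of the $\Phi_i$ via Lemma~\ref{lem.kron}-(vii). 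Reading the phrase ``positive multilinear map'' in the statement as an extra hypothesis is shaky---it reads as a name for the construction, and the paper's own proof never uses or verifies positivity of $\Phi$ on general HPD tensors.

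The paper sidesteps the issue entirely by working factor-by-factor: it applies Theorem~\ref{thm.linmap} to each $\Phi_i$ separately to get $\Phi_i(X_i\gm Y_i)\preceq \Phi_i(X_i)\gm\Phi_i(Y_i)$, tensors these inequalities together using the L\"owner monotonicity of the Kronecker product (Lemma~\ref{lem.kron}-(vii)), and \emph{then} invokes~\eqref{eq.11} on the right-hand side. In short, the paper uses~\eqref{eq.11} \emph{after} applying the single-variable theorem componentwise, whereas you use it \emph{before}; the paper's order only ever evaluates $\Phi$ on rank-one tensors and never needs $\otimes_i\Phi_i$ to be positive globally.
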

\begin{proof}
  Expanding the definition of $\Phi$ we have
  \begin{align*}
    \Phi(\nlpkron_i (X_i\gm Y_i)) &= \nlpkron_i \Phi_i(X_i\gm Y_i) \preceq \nlpkron_i [\Phi_i(X_i)\gm \Phi_i(Y_i)]\\
    &= \bigl[\nlpkron_i\Phi_i(X_i)\bigr]\gm \bigl[\nlpkron_i\Phi_i(Y_i)\bigr] = \Phi(\nlpkron_i X_i)\gm \Phi(\nlpkron_i Y_i).
  \end{align*}
  The operator inequality~\eqref{eq.12} follows upon invoking Theorem~\ref{thm.linmap} and Lemma~\ref{lem.kron}-(viii).
\end{proof}

Building on Theorem~\ref{thm.multi.linmap}, we also derive a generalisation to Theorem~\ref{thm.gc}. 
\begin{theorem}
  Let $h: \kron_i\Hc_i' \to \reals$ be nondecreasing (in L\"owner order) and g-convex Let $r_i \in \set{\pm1}$ and let $\Phi: \kron_i \Hc_i \to \kron_i \Hc_i'$ be a strictly positive multilinear map. Then, $\phi(X_1,\ldots,X_m) = (h\circ \Phi)(\nlpkron_i X_i^{r_i})$ is jointly g-convex (i.e., g-convex in $X_1,\ldots,X_m$).
\end{theorem}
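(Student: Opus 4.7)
The plan is to mimic the proof of Theorem~\ref{thm.gc}, simply replacing the univariate ingredients (Theorem~\ref{thm.linmap} and the identity $(X\gm Y)^r = X^r \gm Y^r$) by their multivariable analogues (Theorem~\ref{thm.multi.linmap} and the tensor identity~\eqref{eq.11}). As usual, it suffices to establish midpoint joint g-convexity, i.e.\ that for arbitrary pairs $(X_i,Y_i)$ of HPD matrices (with $X_i,Y_i$ of matching size) one has
\[
  \phi(X_1\gm Y_1,\ldots,X_m\gm Y_m) \le \half\phi(X_1,\ldots,X_m) + \half\phi(Y_1,\ldots,Y_m).
\]

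The first step is to push the midpoint geometric means past the individual inversions. Since $r_i\in\set{\pm 1}$, the identity $(X\gm Y)^{-1} = X^{-1}\gm Y^{-1}$ (see \citep[4.16]{bhatia07}, as used already in Example~\ref{eg.one}) gives $(X_i\gm Y_i)^{r_i} = X_i^{r_i}\gm Y_i^{r_i}$ for each $i$. Next, invoking the multivariable tensor identity~\eqref{eq.11}, I would rewrite
\[
  \nlpkron_{i=1}^m (X_i\gm Y_i)^{r_i} = \nlpkron_{i=1}^m (X_i^{r_i}\gm Y_i^{r_i}) = \Bigl(\nlpkron_{i=1}^m X_i^{r_i}\Bigr)\gm \Bigl(\nlpkron_{i=1}^m Y_i^{r_i}\Bigr).
\]

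The second step is to pull the geometric mean through the strictly positive multilinear map $\Phi$. Applying Theorem~\ref{thm.multi.linmap} to the tensor geometric mean on the right-hand side above yields
\[
  \Phi\Bigl(\nlpkron_i (X_i\gm Y_i)^{r_i}\Bigr) \;\preceq\; \Phi\bigl(\nlpkron_i X_i^{r_i}\bigr) \gm \Phi\bigl(\nlpkron_i Y_i^{r_i}\bigr).
\]
Since $h$ is nondecreasing in the L\"owner order, this operator inequality survives under $h$, and then the assumed g-convexity of $h$ yields
\[
  h\bigl(\Phi(\nlpkron_i X_i^{r_i})\gm \Phi(\nlpkron_i Y_i^{r_i})\bigr) \;\le\; \half h\bigl(\Phi(\nlpkron_i X_i^{r_i})\bigr) + \half h\bigl(\Phi(\nlpkron_i Y_i^{r_i})\bigr),
\]
which is precisely the desired midpoint inequality for $\phi$ upon unpacking the definition of $\phi$.

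There is really no serious obstacle: every link in the chain is already in place. The only step that deserves a moment of care is confirming that Theorem~\ref{thm.multi.linmap} is applicable, i.e.\ that $\Phi$ being strictly positive ensures the geometric mean $\Phi(\nlpkron_i X_i^{r_i})\gm\Phi(\nlpkron_i Y_i^{r_i})$ is well-defined (both arguments must be HPD, not merely PSD), and that the restriction $r_i\in\set{\pm 1}$ is essential because the identity $(X\gm Y)^r = X^r\gm Y^r$ fails for general real $r$---this is exactly the same caveat as in Theorem~\ref{thm.gc}. Strict g-convexity of $h$ would, by the same argument, propagate to strict joint g-convexity of $\phi$, exactly as remarked at the end of the proof of Theorem~\ref{thm.gc}.
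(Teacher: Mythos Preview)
Your proposal is correct and follows essentially the same approach as the paper: reduce to midpoint g-convexity, use $(X_i\gm Y_i)^{r_i}=X_i^{r_i}\gm Y_i^{r_i}$ for $r_i\in\set{\pm1}$, apply Theorem~\ref{thm.multi.linmap} (with $X_i^{r_i},Y_i^{r_i}$ in place of $X_i,Y_i$), then invoke monotonicity and g-convexity of $h$. Your explicit detour through identity~\eqref{eq.11} is harmless but unnecessary, since Theorem~\ref{thm.multi.linmap} already absorbs it; the paper simply applies Theorem~\ref{thm.multi.linmap} directly to $\nlpkron_i(X_i^{r_i}\gm Y_i^{r_i})$.
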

\begin{proof}
  Since $\phi$ is continuous, it suffices to establish midpoint g-convexity.
  \begin{align*}
    (h \circ \Phi)(\nlpkron_i (X_i\gm Y_i)^{r_i}) &= (h \circ \Phi)(\nlpkron_i ( X_i^{r_i}\gm Y_i^{r_i}))\\
    &\preceq h\bigl(\Phi(\nlpkron_i X_i^{r_i}) \gm \Phi(\nlpkron_i Y_i^{r_i}) )\\
    &\preceq \half\left( (h \circ \Phi)(\nlpkron_i X_i^{r_i}) + (h \circ \Phi)(\nlpkron_i Y_i^{r_i})\right)\\
    &= \half\left(\phi(X_1,\ldots,X_m)+\phi(Y_1,\ldots,Y_m)\right).
  \end{align*}
  Since $h$ is nondecreasing, using Theorem~\ref{thm.multi.linmap} the first inequality follows. The second one follows as $h$ is g-convex, which completes the proof.
\end{proof}

Using identities \eqref{eq.10} and~\eqref{eq.11} with Lemma~\ref{lem.gm.maj}  we obtain the following log-majorisations.
\begin{prop}
  \label{prop.kron.maj}
  Let $(A_i,B_i)_{i=1}^m$ be pairs of HPD matrices of compatible sizes. Then,
  \begin{equation*}
    \begin{split}
      \lambda(\nlpkron_{i=1}^m A_i\gm_t B_i) &\prec_{\log} \lambda([\nlpkron_{i=1}^mA_i]^{1-t}[\nlpkron_{i=1}^mB_i]^t),\qquad t \in [0,1]\\
      \lambda([\nlpkron_{i=1}^mA_i]^{1-t}[\nlpkron_{i=1}^mB_i]^t) &\prec_{\log} \lambda[\nlpkron_{i=1}^mA_i^{1-t}]\lambda[\nlpkron_{i=1}^mB_i^t].
    \end{split}
  \end{equation*}
\end{prop}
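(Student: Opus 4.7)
The plan is to reduce Proposition~\ref{prop.kron.maj} to the single-pair log-majorisation of Lemma~\ref{lem.gm.maj} after first lifting identity~\eqref{eq.11} from the midpoint mean $\gm$ to the parameterised mean $\gm_t$. Concretely, I would first establish
\[
\nlpkron_{i=1}^m (A_i\gm_t B_i) \;=\; \bigl(\nlpkron_{i=1}^m A_i\bigr)\gm_t \bigl(\nlpkron_{i=1}^m B_i\bigr),\qquad t\in[0,1],
\]
which is the $t$-weighted analogue of~\eqref{eq.11}. The proof copies the argument of Lemma~\ref{lem.gm.kron} almost verbatim, replacing the auxiliary map $\gamma(X,Y)=(X^{-1/2}YX^{-1/2})^{1/2}$ by $\gamma_t(X,Y)=(X^{-1/2}YX^{-1/2})^t$; the only Kronecker-product fact used is Lemma~\ref{lem.kron}-(vi), which permits $(X\kron Y)^s=X^s\kron Y^s$ for \emph{any} real $s$. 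Iterating the two-factor identity across the $m$ factors (exactly as was done to pass from Lemma~\ref{lem.gm.kron} to~\eqref{eq.11}) gives the $m$-fold version.

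With this extended identity in hand, set $\widetilde A:=\nlpkron_i A_i$ and $\widetilde B:=\nlpkron_i B_i$; both are HPD by Lemma~\ref{lem.kron}-(v). Applying Lemma~\ref{lem.gm.maj} to the single pair $(\widetilde A,\widetilde B)$ yields
\[
\lambda(\widetilde A\gm_t\widetilde B)\;\prec_{\log}\;\lambda(\widetilde A^{1-t}\widetilde B^t)\;\prec_{\log}\;\lambda(\widetilde A^{1-t})\lambda(\widetilde B^t).
\]
Rewriting the leftmost expression via the lifted identity from the previous paragraph recovers the first log-majorisation in the Proposition. For the second, one further application of Lemma~\ref{lem.kron}-(vi) rewrites $\widetilde A^{1-t}=\nlpkron_i A_i^{1-t}$ and $\widetilde B^t=\nlpkron_i B_i^t$, delivering the claimed bound.

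The only non-bookkeeping step is the $t$-weighted Kronecker identity above; everything else is a direct quotation of Lemma~\ref{lem.gm.maj}. Because Lemma~\ref{lem.kron}-(vi) holds for arbitrary real exponents, this verification presents no genuine obstacle---one simply tracks the same algebraic manipulations used in Lemma~\ref{lem.gm.kron}. In particular, no fresh majorisation machinery (for instance, antisymmetric/Grassmann exterior products or the result of~\cite{maAu12}) needs to be invoked here, since that content has already been absorbed into the proof of Lemma~\ref{lem.gm.maj}.
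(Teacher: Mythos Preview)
Your proposal is correct and is essentially the approach the paper takes: the paper simply writes ``Using identities~\eqref{eq.10} and~\eqref{eq.11} with Lemma~\ref{lem.gm.maj} we obtain the following log-majorisations,'' leaving implicit precisely the step you spell out---namely, that the Kronecker identity~\eqref{eq.11} extends to $\gm_t$ via the same algebra as Lemma~\ref{lem.gm.kron} (using~\eqref{eq.10} in place of the square-root case). Your write-up just makes explicit what the paper treats as routine.
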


Proposition~\ref{prop.kron.maj} grants us the following multivariate analogue to Theorem~\ref{thm.gc.sval}.
\begin{theorem}
  \label{thm.kron.gc.sval}
  If $f : \reals\to \reals$ is convex, then $\phi(\cdot):=\nlsum_{j=1}^k f(\log\lambda_j(\nlpkron_{i=1}^mX_i))$ is g-convex on $\set{X_i \in \pp_n}_{i=1}^m$ for each $1\le k \le n$ . If $h : \reals \to \reals$ is nondecreasing and convex, then $\phi(\cdot)=\nlsum_{j=1}^k h(|\log\lambda_j(\nlpkron_{i=1}^mX_i)|)$ is g-convex for $1\le k \le n$.
\end{theorem}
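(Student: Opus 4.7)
The plan is to mirror the proof of Theorem~\ref{thm.gc.sval} almost verbatim, with Proposition~\ref{prop.kron.maj} replacing Lemma~\ref{lem.gm.maj} as the driving majorisation. Since $\phi$ is continuous it suffices to verify midpoint joint g-convexity: for pairs $(A_i,B_i)_{i=1}^m$ with $A_i,B_i\in\pp_n$, the geodesic in the product manifold $\prod_{i=1}^m\pp_n$ between $(A_1,\ldots,A_m)$ and $(B_1,\ldots,B_m)$ is $(A_1\gm_t B_1,\ldots,A_m\gm_t B_m)$, and one must compare $\phi$ evaluated there against $(1-t)\phi(A_1,\ldots,A_m)+t\phi(B_1,\ldots,B_m)$.

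The first step is to iterate identity~\eqref{eq.10} to rewrite $\nlpkron_i A_i^{1-t}=(\nlpkron_i A_i)^{1-t}$ and likewise for $B_i^t$; consequently the rightmost quantity in Proposition~\ref{prop.kron.maj} becomes $\lambda(\nlpkron_i A_i)^{1-t}\lambda(\nlpkron_i B_i)^{t}$ (componentwise product of decreasingly sorted positive eigenvalues). Taking componentwise logarithms of the log-majorisation chain then yields the ordinary majorisation
\begin{equation*}
\log\lambda\bigl(\nlpkron_i (A_i\gm_t B_i)\bigr)\;\prec\; (1-t)\log\lambda\bigl(\nlpkron_i A_i\bigr)+t\log\lambda\bigl(\nlpkron_i B_i\bigr),
\end{equation*}
which is the multivariate analogue of~\eqref{eq:8}.

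For the first claim, applying the Hardy--Littlewood--P\'olya theorem exactly as in the proof of Theorem~\ref{thm.gc.sval} pushes the convex $f$ through this majorisation, and summing the top $k$ terms gives
\begin{equation*}
\nlsum_{j=1}^k f\bigl(\log\lambda_j(\nlpkron_i (A_i\gm_t B_i))\bigr)\le (1-t)\nlsum_{j=1}^k f\bigl(\log\lambda_j(\nlpkron_i A_i)\bigr)+t\nlsum_{j=1}^k f\bigl(\log\lambda_j(\nlpkron_i B_i)\bigr),
\end{equation*}
which is joint midpoint g-convexity of $\phi$. For the second claim I would follow the Ky Fan route of~\eqref{eq:9}: applying the Ky Fan $k$-norm $\nlsum_{j=1}^k\sigma_j(\cdot)$ upgrades the displayed majorisation into a weak majorisation between the $|\log\lambda(\cdot)|$ vectors, after which monotonicity and convexity of $h$ deliver the desired top-$k$ inequality for $h(|\log\lambda_j(\cdot)|)$.

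The main obstacle is not conceptual---no new matrix inequality beyond Proposition~\ref{prop.kron.maj} is needed, and the Hardy--Littlewood--P\'olya step transfers unchanged from the scalar case---but rather bookkeeping: one must check that componentwise logarithm converts the log-majorisation cleanly into a standard majorisation (legitimate because all eigenvalues are strictly positive), and that the Kronecker-power identity $\nlpkron_i A_i^{s}=(\nlpkron_i A_i)^{s}$ is applied at both $s=1-t$ and $s=t$ before the spectral step.
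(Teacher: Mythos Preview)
Your proposal is correct and is precisely the approach the paper intends: the paper does not spell out a proof of Theorem~\ref{thm.kron.gc.sval} but simply states that it follows from Proposition~\ref{prop.kron.maj} as the multivariate analogue of Theorem~\ref{thm.gc.sval}, which is exactly the route you take. The bookkeeping points you flag (using~\eqref{eq.10} to collapse $\nlpkron_i A_i^{s}$ to $(\nlpkron_i A_i)^{s}$, and that log-majorisation on strictly positive eigenvalues becomes ordinary majorisation upon taking componentwise logarithms) are the only things to verify, and both are routine.
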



Theorem~\ref{thm.kron.gc.sval} brings us to the end of our theoretical results on recognising and constructing g-convex functions. We are now ready to devote attention to optimisation algorithms. In particular, we first discuss manifold optimisation~\citep{absil}  techniques in \S\ref{sec.manopt}. Then,  in \S\ref{sec.ln} we introduce a special class of functions that overlaps with g-convex functions, but not entirely, and admits simpler ``conic fixed-point'' algorithms. 

\section{Manifold optimisation for g-convex functions}
\label{sec.manopt}
Since $\pp_d$ is a smooth manifold, we can use optimisation techniques based on exploiting smooth manifold structure. In addition to common concepts such as tangent vectors and derivatives along manifolds, different optimisation methods need a subset of new definitions and explicit expressions for inner products, gradients, retractions, vector transport and Hessians \citep{absil,hiai12}. 

Since $\pp_d$ can be viewed as a sub-manifold of the Euclidean space $\reals^{2d^2}$, most of concepts of importance to our study can be defined by using the embedding structure of Euclidean space. The tangent space at any point is the space $\H_d$ of $d\times d$ hermitian matrices. The derivative of a function on the manifold in any direction in the tangent space is simply the embedded Euclidean derivative in that direction.

For several optimisation algorithms, two different inner product formulations were tested in \citep{jeuVaVa} for $\pp_d$. The authors observed that the intrinsic inner product leads to the best convergence speed for the tested algorithms. We too observed that the intrinsic inner product yields more than a hundred times faster convergence for our algorithms compared to the induced inner product of Euclidean space. The \emph{intrinsic inner product} of two tangent vectors at point $X$ on the manifold is given by
\begin{align}
g_{X}(\eta,\xi)=\trace (\eta X^{-1} \xi X^{-1}),\qquad \eta, \xi \in \H_d.
\label{eqn.nip}
\end{align}
This intrinsic inner product leads to geodesics of the form~\eqref{eq.14}.
Now that we have set up an inner product tensor, we can define the gradient direction as the direction of the maximum change. The inner product between the gradient vector and a vector in the tangent space is equal to the gradient of the function in that direction. If $\text{grad}^{\rm H}f(X)=\tfrac12 ( \text{grad} f(X) + (\text{grad} f(X))^{*} )$ is the hermitian part of Euclidean gradient, then the gradient in intrinsic metric is given by:
\begin{align*}
\text{grad}^{\rm HPD}f(X) =  X \text{grad}^{\rm H}f(X) X.
\end{align*}
The simplest gradient descent approach, namely steepest descent, also needs the notion of projection of a vector in the tangent space onto a point on the manifold. Such a projection is called \emph{retraction}. If the manifold is Riemannian, a particular retraction is the exponential map, i.e., moving along a geodesic. If the inner product is the induced inner product of the manifold, then the retraction is normal retraction on the Euclidean space which is obtained by summing the point on the manifold and the vector on the tangent space. The intrinsic inner product of~\eqref{eqn.nip} of the Riemannian manifold leads to the following exponential map:
\begin{align}
R_X^{\rm HPD}(\xi) =  X^{1/2} \exp ( X^{-1/2} \xi  X^{-1/2}) X^{1/2},\qquad \xi \in \H_d.
\label{eq.retr}
\end{align}
From a numerical perspective, our experiments revealed that the following equivalent representation of the retraction~\eqref{eq.retr} gives the best computational speed:
\begin{equation}
  \label{eq:19}
R_X^{\rm HPD}(\xi) = X\exp(X^{-1} \xi),\qquad \xi \in \H_d.
\end{equation}

Definitions of the gradient and retraction suffice for implementing steepest descent on $\pp_d$. For approaches such as conjugate gradients or quasi-newton methods, we need to relate the tangent vector at one point to the tangent vector at another point, i.e., we need to define \emph{vector transport}. 
 A special case of vector transport on a Riemannian manifold is parallel transport: for the induced Euclidean metric, parallel transport is simply the identity map. In order to compute the parallel transport one first needs to compute the \emph{Levi-Civita connection}. This connection is a way to compute directional derivatives of vector fields. It is a map from the Cartesian product of tangent bundles to the tangent bundle:
\begin{equation*}
\nabla : T\mathcal{M} \times  T\mathcal{M} \rightarrow T\mathcal{M},
\end{equation*}
where $T\mathcal{M}$ is the tangent bundle of manifold $\mathcal{M}$ (i.e. the space of smooth vector fields on $\mathcal{M}$).
It can be verified that for the intrinsic metric~\eqref{eqn.nip} the following connection satisfies all the needed properties (see e.g., \citep{jeuVaVa}):
\begin{align*}
  \nabla_{\zeta_X}^{\rm HPD} \xi_X = D\xi(X)[\zeta_X]-\tfrac12 (\zeta_X X^{-1} \xi_X + \xi_X X^{-1} \zeta_X),
\end{align*}
where $D \xi(X)$ denotes the classical Fr\'echet derivative of $\xi(X)$. $\xi_X$ and $\zeta_X$ are vector fields on the manifold $\H_d$. Subindex $X$ is used to discriminate a vector field from a tangent vector.

Consider $P(t)$, a vector field along the geodesic curve $\gamma(t)$. Parallel transport along a curve is given by the differential equation
\begin{align*}
D_{t} P(t) = \nabla_{\dot{\gamma}(t)} P(t) = 0,\qquad\text{s.t.}\ P(0)=\eta.
\end{align*}
For the intrinsic metric, the above equation becomes
\begin{align*}
\dot{P}(t) - \tfrac12 (\dot{\gamma}(t) X_{t}^{-1} P(t) + P(t) X_{t}^{-1}\dot{\gamma}(t)) = 0.
\end{align*}
The geodesic  passing through $\gamma(0)=X$ with $\dot{\gamma}=\xi$ is given by
\begin{align*}
\gamma(t) =  X^{1/2} \exp ( tX^{-1/2} \xi  X^{-1/2}) X^{1/2}.
\end{align*}
For $t=1$ we get the retraction~\eqref{eq.retr}. It can be shown that along the geodesic curve the following equation gives the parallel transport:
\begin{align*}
P(t) =  X^{1/2} \exp (t\tfrac12 X^{-1/2} \xi  X^{-1/2}) X^{-1/2}  \eta  X^{-1/2} \exp (t\tfrac12 X^{-1/2} \xi  X^{-1/2})  X^{1/2}.
\end{align*}
Thus, parallel transport for the intrinsic inner product is given by
\begin{align*}
\mathcal{T}_{X,Y}^{\rm HPD}(\eta) =  X^{1/2}  ( X^{-1/2} Y  X^{-1/2})^{1/2} X^{-1/2}  \eta  X^{-1/2}  ( X^{-1/2} Y  X^{-1/2})^{1/2}  X^{1/2}.
\end{align*}
It is important to note that this parallel transport can be written in a compact form that is also computationally more advantageous, namely,
\begin{equation}
\mathcal{T}_{X,Y}^{\rm HPD}(\eta) = E \eta E^*,\quad\text{where}\quad E=(YX^{-1})^{1/2}.
\label{eq.partrans}
\end{equation}

We are now ready to describe a quasi-newton method on $\pp_d$. Different algorithms such as conjugate-gradient, BFGS, and trust-region methods for the Riemmanian manifold $\pp_d$ are explained in \citep{jeuVaVa}. Here we only provide details for a limited memory version of Riemmanian BFGS (RBFGS). The RBFGS algorithm for general retraction and vector transport was originally explained in \citep{qi10} and the proof of convergence appeared in \citep{ring12}, although for a slightly different version. It was proved that for g-convex functions and with line-search that satisfies Wolfe conditions, RBFGS algorithm has a (local) superlinear convergence rate. The RBFGS algorithm can be transformed into a limited-memory RBFGS (L-RBFGS) algorithm by unrolling the update step of the approximate Hessian computation as shown in Algorithm~\ref{alg.lrbfgs}. As may be apparent from the algorithm, parallel transport and its inverse can be the computational bottlenecks. One possible speed-up is to store the matrix $E$ and its inverse in~\eqref{eq.partrans}.

\begin{algorithm}[ht]
  \caption{\small L-RBFGS}
  \label{alg.lrbfgs}
  \begin{algorithmic}
    \STATE {\bf Given:} Riemannian manifold $\Mc$ with Riemannian metric $g$; vector transport $\mathcal{T}$ on $\Mc$ with associated retraction $R$; initial value $X_0$; a smooth function $f$
    \STATE Set initial $H_{\rm diag}=1/\sqrt{g_{X_0}(\text{grad} f(X_0),\text{grad} f(X_0))}$ 

    \FOR{$k=0,1,\ldots$}
    \STATE Obtain descent direction $\xi_k$ by unrolling the RBFGS method\\
    \hskip16pt$\xi_k \gets \textsc{HessMul}(-\text{grad} f(X_k), k)$
    \STATE Use line-search to find $\alpha$ s.t.\ $f(R_{X_k}(\alpha \xi_k))$ is sufficiently smaller than $f(X_k)$
    \STATE Calculate $X_{k+1}=R_{X_k}(\alpha \xi_k)$
    \STATE Define $S_k=\mathcal{T}_{X_k,X_{k+1}}(\alpha \xi_k)$
    \STATE Define $Y_k=\text{grad} f(X_{k+1})-\mathcal{T}_{X_k,X_{k+1}}(\text{grad} f(X_k))$
    \STATE Update $H_{\text{diag}}=g_{X_{k+1}}(S_k,Y_k)/g_{X_{k+1}}(Y_k,Y_k)$
    \STATE Store $Y_k$; $S_k$; $g_{X_{k+1}}(S_k,Y_k)$; $g_{X_{k+1}}(S_k,S_k)/g_{X_{k+1}}(S_k,Y_k)$; $H_{\rm diag}$
    \ENDFOR
    \RETURN $X_k$\\[8pt]
    \STATE \textbf{function} $\textsc{HessMul}(P, k)$
    \IF{$k>0$}
      \STATE  $P_{k}=P-\frac{g_{X_{k+1}}(S_k,P_{k+1})}{g_{X_{k+1}}(Y_k,S_k)}Y_k$
      \STATE   $\hat{P}=\mathcal{T}_{X_k,X_{k+1}}^{-1} \textsc{HessMul}(\mathcal{T}_{X_k,X_{k+1}} P_{k},k-1)$
      \RETURN $\hat{P} - \frac{g_{X_{k+1}}(Y_k,\hat{P})}{g_{X_{k+1}}(Y_k,S_k)}S_k + \frac{g_{X_{k+1}}(S_k,S_k)}{g_{X_{k+1}}(Y_k,S_k)}P$
      \ELSE
      \RETURN $H_{\rm diag}P$
     \ENDIF
    \STATE \textbf{end function} 
  \end{algorithmic}
\end{algorithm}

\section{Geometric optimisation for log-nonexpansive functions}
\label{sec.ln}
Though manifold optimisation is powerful and widely applicable (see e.g., the excellent toolbox~\citep{manopt}), for a special class of geometric optimisation problems we may be able to circumvent its heavy machinery in favour of potentially much simpler algorithms. 

This motivation underlies the material developed in this section, where ultimately our goal is to obtain fixed-point iterations by viewing $\pp_d$ as a convex cone instead of a Riemannian manifold. This viewpoint is grounded in nonlinear Perron-Frobenius theory~\citep{lemNuss12}, and it proves to be of practical value for our application in \S\ref{sec.ecd}. Notably, for certain problems we can obtain globally optimal solutions even without g-convexity. We believe the general conic optimisation theory developed in this section may be of wider interest.

Consider thus the following minimisation problem 
\begin{equation}
  \label{eq.minprob}
  \nlmin_{\ms \succ 0}\quad \Phi(\ms), 
\end{equation}
where $\Phi$ is a continuously differentiable real-valued function on $\pp_d$. 
Since the constraint set $\{\ms \succ 0\}$ is an open subset of a Euclidean space, the first-order optimality condition for~\eqref{eq.minprob} is similar to that of unconstrained optimisation. A point $S^*$ is a candidate local minimum of $\Phi$ only if its gradient at this point is zero, that is,
\begin{equation}
\nabla \Phi(\ms^*) =0.
  \label{eq.mingrad}
\end{equation}

The nonlinear (matrix) equation~\eqref{eq.mingrad} could be solved using numerical techniques such as Newton's method. But, such approaches can be computationally more demanding than the original optimisation problem, especially because they involve the (inverse of) the second derivative $\nabla^2\Phi$ at each iteration. We propose to exploit a fixed-point iteration that offers a simpler method for solving~\eqref{eq.mingrad}. \emph{More importantly,} the fixed-point technique allows one to show that under certain conditions the solution to~\eqref{eq.mingrad} is unique, and therefore potentially a global minimum (essentially, if the global minimum is attained, then it must be this unique stationary point). 

Assume therefore that~\eqref{eq.mingrad} is rewritten as the fixed-point equation
\begin{equation}
  S^*=\Gc(S^*).
  \label{eq.pz}
\end{equation}
Then, a fixed-point of the map $\Gc : \pp_d \to \pp_d$ is a potential solution (since it is a stationary point) to the minimisation problem~\eqref{eq.minprob}. The natural question is how to find such a  fixed-point, and starting with a feasible $\ms_0 \succ 0$, whether it suffices to perform the Picard iteration
\begin{equation}
  \label{eq:iterform}
  \ms_{k+1} \gets \Gc(\ms_k),\quad k=0,1,\ldots.
\end{equation}
Iteration~\eqref{eq:iterform} is (usually) \emph{not} a fixed-point iteration when cast in a normed vector space---the conic geometry of $\pp_d$ alluded to previously suggests that it might be better to analyse the iteration using a non-vectorial metric.

We provide below a class of sufficient conditions ensuring convergence of~\eqref{eq:iterform}. Therein, the correct metric space in which to study convergence is neither the Euclidean (or Banach) space $\reals^n$ nor the Riemannian manifold $\pp_d$ with distance~\eqref{eq.metr}. Instead, a conic metric proves more suitable, namely, the  Thompson part metric, an object of great interest in nonlinear Perron-Frobenius theory~\citep{lemNuss12,leeLim}. 

Our sufficient conditions stem from the following key definition.
\begin{defn}[Log-nonexpansive]
  Let $f: (0,\infty)\to (0,\infty)$. We say $f$ is \emph{log-nonexpansive} (LN) on a compact interval $I \subset (0,\infty)$ if there exists a constant $0 \le q \le 1$ such that
  \begin{equation}
    \label{eq.17}
    |\log f(t) - \log f(s)| \le q|\log t - \log s|,\quad \forall s, t \in I.
  \end{equation}
  If $q<1$, we say $f$ is \emph{$q$-log-contractive}. If for every $s\neq t$ it holds that
  \begin{equation*}
    |\log f(t) - \log f(s)| < |\log t - \log s|,\quad \forall s,t\quad s\neq t,
  \end{equation*}
  we say $f$ is log-contractive. 
\end{defn}


We use log-nonexpansive functions in a concrete optimisation task in Section~\ref{sec.example}.  The proofs therein rely on core properties of the Thompson metric and contraction maps in the associated metric space---we cover requisite background in Section~\ref{sec.thompson}. 
The content of Section~\ref{sec.thompson} is of independent interest as the theorems therein provide techniques for establishing contractivity (or nonexpansivity) of nonlinear maps from $\pp_d$ to $\pp_k$.

\subsection{Thompson metric and contractive maps}
\label{sec.thompson}
On $\pp_d$, the \emph{Thompson metric} is defined as (\emph{cf.} $\riem$ which uses $\frob{\cdot}$)
\begin{equation}
\label{thom}
  \thom(X,Y) :=  \norm{\log (Y^{-1/2}XY^{-1/2})},
\end{equation}
where $\norm{\cdot}$ is the usual operator norm (largest singular value), and `log' is the matrix logarithm.  Let us recall some core (known) properties of~\eqref{thom}---for details please see~\citep{leeLim,lemNuss12,limPal12}. 

\begin{subequations}
  \begin{prop}
    \label{prop.thom}
    Unless noted otherwise, all matrices are assumed to be HPD.
    \begin{align}
      \label{eq:4}
      \thom(\inv{X},\inv{Y}) &\quad=\quad \thom(X,Y)\\
      \label{eq:5}
      \thom(B^*XB,B^*YB)     &\quad=\quad \thom(X,Y),\qquad B \in\text{GL}_n(\C)\\
      \label{eq:6}
      \thom(X^t,Y^t)         &\quad\le\quad |t|\thom(X,Y),\qquad\text{for}\ t\in [-1,1]\\
      \label{eq:7}
      \thom\Bigl(\nlsum_i w_iX_i, \nlsum_i w_iY_i \Bigr) &\quad\le\quad \max_{1 \le i \le m} \thom(X_i,Y_i),\qquad w_i \ge 0, w \neq 0\\
      \label{eq:10}
      \thom(X+A, Y+A) &\quad\le\quad
      \frac{\alpha}{\alpha+\beta}\thom(X,Y),\qquad A \succeq 0,
    \end{align}
    where $\alpha=\max\set{\norm{X},\norm{Y}}$ and
    $\beta=\lambda_{\min}(A)$.
  \end{prop}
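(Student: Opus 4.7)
The plan is to exploit the equivalent cone characterisation
\begin{equation*}
\thom(X,Y) \le R \iff e^{-R}Y \preceq X \preceq e^R Y,
\end{equation*}
which follows immediately from the fact that $\thom(X,Y)$ equals the larger of $|\log\lambda_{\max}|$ and $|\log\lambda_{\min}|$ of $Y^{-1/2}XY^{-1/2}$. With this tool in hand, \eqref{eq:4}--\eqref{eq:7} reduce to short manipulations in the L\"owner order.

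For \eqref{eq:4}, I would observe that $(Y^{-1/2}XY^{-1/2})^{-1} = Y^{1/2}X^{-1}Y^{1/2}$, so the eigenvalues of $(Y^{-1})^{-1/2}X^{-1}(Y^{-1})^{-1/2} = Y^{1/2}X^{-1}Y^{1/2}$ are reciprocals of those of $Y^{-1/2}XY^{-1/2}$; taking logs flips sign, and the operator norm is sign-invariant. For \eqref{eq:5}, $(B^*YB)^{-1/2}(B^*XB)(B^*YB)^{-1/2}$ is similar to $(B^*YB)^{-1}(B^*XB) = B^{-1}Y^{-1}XB$, hence to $Y^{-1}X$, hence to $Y^{-1/2}XY^{-1/2}$, so the spectrum (and therefore $\thom$) is unchanged. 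For \eqref{eq:6} with $t \in [0,1]$, the L\"owner--Heinz operator monotonicity of $Z\mapsto Z^t$ applied to $e^{-R}Y \preceq X \preceq e^R Y$ yields $e^{-tR}Y^t \preceq X^t \preceq e^{tR}Y^t$; the case $t \in [-1,0]$ is obtained by composing with \eqref{eq:4}. For \eqref{eq:7}, with $R = \max_i\thom(X_i,Y_i)$, multiplying $e^{-R}Y_i \preceq X_i \preceq e^R Y_i$ by $w_i \ge 0$ and summing preserves both inequalities in the L\"owner order.

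The main obstacle is the contraction estimate \eqref{eq:10}. A na\"ive approach---take $R = \thom(X,Y)$, use $e^{-R}Y \preceq X \preceq e^R Y$, then insert the scalar bounds $Y \preceq \alpha I$ and $A \succeq \beta I$---yields only the sufficient threshold $e^{R'} \ge (\alpha e^R + \beta)/(\alpha+\beta)$, and weighted AM--GM tells us $(\alpha e^R + \beta)/(\alpha+\beta) \ge e^{\alpha R/(\alpha+\beta)}$, i.e.\ the crude bound is strictly weaker than the claimed contraction by a factor of $\alpha/(\alpha+\beta)$. To obtain the sharp constant I would follow the scalar intuition: under the substitution $u = \log x$, $v = \log y$, the map $f(u) = \log(e^u + a)$ has the increasing derivative $e^u/(e^u+a)$, and the mean value theorem immediately yields the factor $\max(x,y)/(\max(x,y)+a)$. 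To lift this to matrices I would parameterise the Thompson-geodesic from $Y$ to $X$ and apply an integrated mean-value inequality to $F(Z) = \log(Z+A)$; the Fr\'echet derivative of $F$ admits, by operator-monotonicity of $\log$, an integral representation as a positive linear map whose operator norm along the geodesic I would show is uniformly bounded by $\alpha/(\alpha+\beta)$ using $\|X\|,\|Y\|\le\alpha$ and $A \succeq \beta I$. I expect the subtlety to lie in controlling noncommutativity inside the Fr\'echet derivative and in verifying that the eigenvalue bound $\alpha$ propagates uniformly to every point of the geodesic after the $A$-translation; if that route proves too heavy, an alternative is to bound $\lambda_{\max}$ and $\lambda_{\min}$ of $(Y+A)^{-1/2}(X+A)(Y+A)^{-1/2}$ separately by a direct scalar-reduction argument applied eigenvalue-wise in a simultaneous basis for the M\"obius-type family $\{tY + (1-t)A : t \in [0,1]\}$.
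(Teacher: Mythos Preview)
Your arguments for \eqref{eq:4}--\eqref{eq:7} are correct and essentially the standard ones. Note that the paper does not actually prove this proposition: it is stated as a collection of known facts with pointers to Lee--Lim, Lemmens--Nussbaum, and Lim--P\'alfia, so there is no in-paper proof to compare against.

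For \eqref{eq:10} you correctly diagnose that the na\"ive L\"owner-order route yields only $\thom(X+A,Y+A)\le \log\bigl((\alpha e^R+\beta)/(\alpha+\beta)\bigr)$, which by weighted AM--GM is strictly weaker than $\frac{\alpha}{\alpha+\beta}R$. Your scalar computation via $g(u)=\log(e^u+a)$ with $g'(u)=e^u/(e^u+a)$ increasing is exactly the right engine. However, both of your proposed lifts to matrices are heavier than needed, and the second one has a genuine gap: the family $\{tY+(1-t)A:t\in[0,1]\}$ does \emph{not} commute unless $[Y,A]=0$ (the commutator of two members is $(t-s)[Y,A]$), so no simultaneous eigenbasis exists in general. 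The Fr\'echet-derivative/geodesic route can in principle be pushed through, but extracting the precise constant $\alpha/(\alpha+\beta)$ from the operator norm of $D\log(\cdot+A)$ along the path is delicate and unnecessary.

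The clean lift of your scalar MVT uses the generalised Rayleigh quotient. For any $v\neq 0$ put $x=v^*Xv$, $y=v^*Yv$, $a=v^*Av$ (normalise $\|v\|=1$). Then $x,y\le\alpha$, $a\ge\beta$, and since $\lambda_{\max}(Y^{-1/2}XY^{-1/2})=\max_{w\neq 0}\frac{w^*Xw}{w^*Yw}$ (and symmetrically with $X,Y$ swapped) we have $\max(x,y)/\min(x,y)\le e^R$ with $R=\thom(X,Y)$. Your scalar bound now gives
\[
\Bigl|\log\tfrac{x+a}{y+a}\Bigr|\ \le\ \tfrac{\max(x,y)}{\max(x,y)+a}\,R\ \le\ \tfrac{\alpha}{\alpha+\beta}\,R,
\]
the last step because $t\mapsto t/(t+a)$ is increasing and $a\ge\beta$. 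But $\tfrac{x+a}{y+a}=\tfrac{v^*(X+A)v}{v^*(Y+A)v}$, so taking the supremum and infimum over $v$ traps both $\lambda_{\max}$ and $\lambda_{\min}$ of $(Y+A)^{-1/2}(X+A)(Y+A)^{-1/2}$ in $[e^{-\alpha R/(\alpha+\beta)},\,e^{\alpha R/(\alpha+\beta)}]$, which is exactly \eqref{eq:10}. This replaces your geodesic-integration step by a one-line variational reduction.
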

\end{subequations}

We prove now a powerful refinement to~\eqref{eq:5}, which shows contraction under ``compression.''
\begin{theorem}
  \label{thm.comp}
  Let $X \in \C^{d \times p}$, where $p \le d$ have full column rank. Let $A$, $B \in \pp_d$. Then,
  \begin{equation}
    \label{eq.21}
    \thom(X^*AX,X^*BX) \le \thom(A,B).
  \end{equation}
\end{theorem}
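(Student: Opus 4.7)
The plan is to reduce the claimed inequality to a straightforward order-preservation argument using the well-known ``sandwich'' characterisation of the Thompson metric. Recall that for $A, B \in \pp_d$, one has
\begin{equation*}
\thom(A,B) \;=\; \log\max\bigl\{M(A/B),\, M(B/A)\bigr\},\qquad M(A/B) := \inf\set{t > 0 : A \preceq tB},
\end{equation*}
so that $\thom(A,B) \le r$ is equivalent to the operator inequality $e^{-r}B \preceq A \preceq e^r B$. This equivalence is the key vehicle of the proof because it converts a metric statement into a statement about the L\"owner order, which behaves very well under congruence.

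Set $r := \thom(A,B)$ and invoke the sandwich to get $e^{-r}B \preceq A \preceq e^r B$. Congruence by an arbitrary matrix preserves the L\"owner order: if $M \succeq N$, then $X^*MX \succeq X^*NX$ for any $X \in \C^{d\times p}$. Applying this to the sandwich yields
\begin{equation*}
e^{-r}X^*BX \;\preceq\; X^*AX \;\preceq\; e^r X^*BX.
\end{equation*}
Next I would use the full column rank hypothesis on $X$ to ensure $X^*AX, X^*BX \in \pp_p$: for any nonzero $v \in \C^p$, $Xv \ne 0$, hence $v^*(X^*AX)v = (Xv)^*A(Xv) > 0$, and similarly for $B$. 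Thus both congruences live in $\pp_p$ so that the Thompson metric between them is defined.

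Finally, translating back via the sandwich characterisation in $\pp_p$, the two-sided inequality above gives $\thom(X^*AX, X^*BX) \le r = \thom(A,B)$, which is exactly~\eqref{eq.21}. There is no real obstacle here; the only subtlety worth flagging is the strict positivity of $X^*AX$ and $X^*BX$, which is precisely where full column rank is used (without it one would only obtain a pseudo-metric statement on the range of $X$). It is also worth noting that the inequality is an equality when $p=d$ and $X$ is invertible, recovering~\eqref{eq:5} as a special case.
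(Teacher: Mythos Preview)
Your proof is correct and is essentially the same argument as the paper's: both rest on the characterisation $\thom(A,B)=\log\max\{M(A/B),M(B/A)\}$ together with the fact that congruence $A\mapsto X^*AX$ preserves the L\"owner order. The paper unpacks the order-preservation step explicitly as a Rayleigh-quotient inequality $\lambda_1(A_C^{-1}B_C)\le\lambda_1(A^{-1}B)$ (restricting the maximisation to vectors in the range of $X$), whereas you invoke it directly as the sandwich $e^{-r}B\preceq A\preceq e^rB\Rightarrow e^{-r}X^*BX\preceq X^*AX\preceq e^rX^*BX$; these are the same statement, and your packaging is a little cleaner.
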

\begin{proof}
  Let $A_C = X^*AX$ and $B_C = X^*BX$ denote the ``compressions'' of $A$ and $B$, respectively; these compressions are invertible since $X$ is assumed to have full column rank. The largest generalised eigenvalue of the pencil $(A,B)$ is given by
  \begin{equation}
    \label{eq.3dup}
    \lambda_1(A,B) := \lambda_1(\inv{A}B) = \max_{x \neq 0} \frac{x^*Bx}{x^*Ax}.
  \end{equation}
  Starting with~\eqref{eq.3dup} we have the following relations:
  \begin{align*}
    \lambda_1(\inv{A}B) &= \lambda_1(A^{-1/2}BA^{-1/2})\quad=\quad\max_{x \neq 0}\frac{x^*A^{-1/2}BA^{-1/2}x}{x^*x}\\
    &=\max_{w \neq 0} \frac{w^*Bw}{(A^{1/2}w)^*(A^{1/2}w)}\quad=\quad\max_{w\neq 0}\frac{w^*Bw}{w^*Aw}\\
    &\ge \max_{w=Xp, p\neq 0}\frac{w^*Bw}{w^*Aw}\quad=\quad\max_{p\neq 0}\frac{p^*X^*BXp}{p^*X^*AXp}\\
    &=\max_{p \neq 0}\frac{p^*B_Cp}{p^*A_Cp}\quad=\quad\lambda_1(A_C^{-1}B_C) = \lambda_1(A_C^{-1/2}BA_C^{-1/2}).
  \end{align*}
  Similarly, we can show that $\lambda_1(\inv{B}A) = \lambda_1(B^{-1/2}AB^{-1/2}) \ge \lambda_1(B_C^{-1/2}A_CB_C^{-1/2})$. Since $A$, $B$ and the matrices $A_C$, $B_C$ are all positive, we may conclude
  \begin{equation}
    \label{eq.4}
    \max \set{\log \lambda_1(A_U^{-1}B_U), \log \lambda_1(B_U^{-1}A_U)} \le \max\set{\lambda_1(A^{-1}B), \log \lambda_1(B^{-1}A)},
  \end{equation}
  which is nothing but the desired claim $\thom(X^*AX,X^*BX) \le \thom(A,B)$.
\end{proof}

Theorem~\ref{thm.comp} can be extended to encompass more general ``compression'' maps, namely to those defined by operator monotone functions, a class that enjoys great importance in matrix theory---see e.g.,~\citep[Ch.~V]{bhatia97} and \citep{bhatia07}.
\begin{theorem}
  \label{thm.comp2}
  Let $f$ be an operator monotone (i.e., if $X \preceq Y$, then $f(X) \preceq f(Y)$) function on $(0,\infty)$ such that $f(0) \ge 0$. Then,
  \begin{equation}
    \label{eq:17}
    \thom(f(X), f(Y)) \le \thom(X,Y),\qquad X, Y \in \pp_d.
  \end{equation}
\end{theorem}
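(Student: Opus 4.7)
The plan is to exploit the well-known alternative characterization of the Thompson metric: for $X, Y \succ 0$,
\begin{equation*}
\thom(X,Y) = \inf\{t \ge 0 : e^{-t} Y \preceq X \preceq e^t Y\},
\end{equation*}
so that $\thom(X,Y) \le t$ is equivalent to the pair of L\"owner inequalities $X \preceq e^t Y$ and $Y \preceq e^t X$. Thus I need only show that, whenever these two inequalities hold for $(X,Y)$ with $t = \thom(X,Y)$, the analogous pair $f(X) \preceq e^t f(Y)$ and $f(Y) \preceq e^t f(X)$ holds for $(f(X),f(Y))$.

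First I would apply operator monotonicity directly: from $X \preceq e^t Y$ it follows that $f(X) \preceq f(e^t Y)$, and symmetrically $f(Y) \preceq f(e^t X)$. The remaining task is to bound $f(e^t Y)$ by $e^t f(Y)$, i.e.\ to show the ``scalar-homogeneity estimate''
\begin{equation*}
f(\alpha Z) \preceq \alpha f(Z),\qquad \alpha \ge 1,\ Z \succ 0.
\end{equation*}
Here I would invoke L\"owner's theorem: an operator monotone function on $(0,\infty)$ is automatically operator concave. Combined with $f(0) \ge 0$, this yields the standard one-variable concavity trick in operator form. Writing $Z = \tfrac{1}{\alpha}(\alpha Z) + (1 - \tfrac{1}{\alpha}) \cdot 0$ and invoking operator concavity gives $f(Z) \succeq \tfrac{1}{\alpha} f(\alpha Z) + (1-\tfrac{1}{\alpha}) f(0) \succeq \tfrac{1}{\alpha} f(\alpha Z)$, so $f(\alpha Z) \preceq \alpha f(Z)$.

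Applying this with $\alpha = e^t$ and $Z = Y$ (and then $Z=X$) yields $f(e^t Y) \preceq e^t f(Y)$ and $f(e^t X) \preceq e^t f(X)$. Chaining with the operator monotonicity inequalities above gives $f(X) \preceq e^t f(Y)$ and $f(Y) \preceq e^t f(X)$, which by the Thompson metric characterization means $\thom(f(X),f(Y)) \le t = \thom(X,Y)$, as required.

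The main obstacle, and the only nontrivial ingredient, is the passage from operator monotonicity to operator concavity needed to justify $f(\alpha Z) \preceq \alpha f(Z)$ for $\alpha \ge 1$; this rests on L\"owner's theorem, which I would simply cite (e.g.\ \citep[Ch.~V]{bhatia97}). Everything else is a clean two-line manipulation built on the characterization of $\thom$ via sandwiching. The hypothesis $f(0) \ge 0$ is used precisely at the step where one drops the term $(1 - \tfrac{1}{\alpha}) f(0)$ from the concavity inequality; without it, the estimate $f(\alpha Z) \preceq \alpha f(Z)$ and hence the contraction bound can fail.
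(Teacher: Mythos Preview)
Your proof is correct and genuinely different from the paper's. The paper proceeds via the L\"owner integral representation $f(t)=\gamma+\beta t+\int_0^\infty \frac{\lambda t}{\lambda+t}\,d\mu(\lambda)$ and then bounds $\thom(f(A),f(B))$ piece by piece using the Thompson-metric properties~\eqref{eq:4}, \eqref{eq:7}, \eqref{eq:10}; in particular it handles the ``$M(A)$'' integral part and then the affine shift $\gamma I+\beta A$ separately. Your argument bypasses all of this by combining the sandwich characterisation of $\thom$ with the single estimate $f(\alpha Z)\preceq \alpha f(Z)$ for $\alpha\ge 1$, which you derive from operator concavity and $f(0)\ge 0$. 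This is shorter and conceptually cleaner. One small technical remark: since $0\notin(0,\infty)$, the concavity step as written needs either a limiting argument (replace $0$ by $\epsilon I$ and let $\epsilon\downarrow 0$) or, more simply, the observation that $\alpha Z$ and $Z$ commute, so the inequality $f(\alpha Z)\preceq \alpha f(Z)$ reduces to the scalar statement $f(\alpha t)\le \alpha f(t)$, which follows from ordinary concavity of $f$ on $(0,\infty)$ together with $f(0^+)\ge 0$. What the paper's longer route buys is quantitative information: it produces an explicit contraction factor $\tfrac{\alpha}{\alpha+\gamma}$ with $\gamma=f(0)$, and in particular shows that the inequality is \emph{strict} whenever $f(0)>0$ and $A\neq B$. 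Your argument, as stated, yields only nonexpansiveness and does not immediately recover this strictness or the rate.
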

\begin{proof}
  If $f$ is operator monotone with $f(0) \ge 0$, then it admits the integral representation~\citep[(V.53)]{bhatia97}
  \begin{equation}
    \label{eq:16}
    f(t) = \gamma + \beta t + \int_0^\infty \frac{\lambda t}{\lambda + t}d\mu(\lambda),
  \end{equation}
  where $\gamma = f(0)$, $\beta \ge 0$, and $d\mu(t)$ is a nonnegative measure. Using~\eqref{eq:16} we get
  \begin{equation*}
    f(A) = \gamma I + \beta A + \int_0^\infty (\lambda A)(\lambda I + A)^{-1}d\mu(\lambda) =: \gamma I + \beta A + M(A).
  \end{equation*}
  Similarly, we obtain $f(B) = \gamma I + \beta B + M(B)$. Now, consider at first
  \begin{align*}
    \thom(M(A),M(B)) &= \thom(\sint \lambda A(\lambda I + A)^{-1}d\mu(t), \sint \lambda A(\lambda I + A)^{-1}d\mu(t))\\
    &\stackrel{}{\le} \max_{\lambda} \thom(\lambda A(\lambda I + A)^{-1}, \lambda B(\lambda I + B)^{-1})\\
    &\stackrel{}{\le} \max_{\lambda} \thom( (\lambda A^{-1}+I)^{-1}, (\lambda B^{-1}+I)^{-1})\\
    &=\max_{\lambda}\thom( I + \lambda A^{-1}, I + \lambda B^{-1})\\
    &\stackrel{}{\le} \max_{\lambda}\frac{\bar{\alpha}}{\bar{\alpha}+1}\thom(\lambda A^{-1}, \lambda B^{-1}),\qquad \bar{\alpha} := \max\lbrace\norm{A^{-1}}, \norm{B^{-1}}\rbrace,\\
    &=\frac{\bar{\alpha}}{\bar{\alpha}+1}\thom(A,B) < \thom(A,B).
  \end{align*}
  Next, defining $\alpha := \max\{\|\beta A + M(A)\|, \|\beta B + M(B)\|\}$, we can use the above contraction to help prove contraction for the map $f$ as follows:
  \begin{align*}
    \thom(f(A),f(B)) &= \thom(\gamma I + \beta A + M(A), \gamma I + \beta B + M(B))\\
    &\stackrel{}{\le} \frac{\alpha}{\alpha+\gamma}\thom(\beta A + M(A), \beta B + M(B)),\\
    &\stackrel{}{\le} \frac{\alpha}{\alpha+\gamma}\max\left\lbrace\thom(\beta A, \beta B), \thom(M(A), M(B)) \right\rbrace\\
    &\le \frac{\alpha}{\alpha+\gamma}\thom(A, B).
  \end{align*}
  Moreover, for $A\neq B$ the inequality is strict if $f(0) > 0$.
\end{proof}
\begin{example}
  Let $X \in \C^{d\times k}$, let $f=t^r$ for $t \in (0,\infty)$ and $r \in (0,1)$. Then,
  \begin{equation*}
    \begin{split}
      \thom((X^*AX)^r, (X^*BX)^r) &\le \thom(A,B),\qquad\forall A, B \in \pp_d,\\
      \thom(X^*A^rX, X^*B^rX) &\le \thom(A,B),\qquad\forall A, B \in \pp_d.
    \end{split}
  \end{equation*}
\end{example}

Theorem~\ref{thm.comp} and Theorem~\ref{thm.comp2} together yield the following general result.

\begin{corr}
  \label{cor.comp}
  Let $\Phi: \pp_d \to \pp_k$ ($k \le d$), and $\Psi: \pp_k \to \pp_r$ ($r \le k$) be completely positive (see e.g.,~\citep[Ch.~3]{bhatia07}) maps. Then,
  \begin{align}
    \label{eq:14}
    \thom(f(\Phi(X)), f(\Phi(Y))) &\le \thom(X,Y),\qquad X, Y \in \pp_d,\\
    \label{eq:15}
    \thom(\Psi(f(X)), \Psi(f(Y))) &\le \thom(X,Y),\qquad X, Y \in \pp_k.
  \end{align}
\end{corr}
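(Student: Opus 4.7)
The plan is to combine Theorem~\ref{thm.comp2}, which controls the Thompson distance under an operator monotone $f$, with a single auxiliary fact: every completely positive strictly positive map is Thompson nonexpansive. Specifically, I aim to establish that for any completely positive $\Phi: \pp_d \to \pp_k$,
\begin{equation*}
\thom(\Phi(X), \Phi(Y)) \le \thom(X, Y).
\end{equation*}
Once this is in hand, both claims of the corollary follow by stringing together two nonexpansive steps and Theorem~\ref{thm.comp2}.

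To prove the auxiliary, I would invoke the Stinespring dilation to write $\Phi(A) = V^*(I_m \kron A)V$ for some integer $m\ge1$ and some $V \in \C^{dm \times k}$. Because $\Phi$ maps $\pp_d$ into $\pp_k$, the dilation matrix $V$ necessarily has full column rank; otherwise $V^*(I_m \kron A)V$ would be singular for some $A \succ 0$. A short direct computation then yields $\thom(I_m \kron X, I_m \kron Y) = \thom(X,Y)$: indeed, using Lemma~\ref{lem.kron}-(ii),(iii),(vi) we have $(I_m \kron Y)^{-1/2}(I_m \kron X)(I_m \kron Y)^{-1/2} = I_m \kron (Y^{-1/2}XY^{-1/2})$, and the operator norm of the matrix logarithm is unaffected by tensoring with $I_m$. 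Theorem~\ref{thm.comp} then applies to the full-rank $V$ and gives
\begin{equation*}
\thom(\Phi(X), \Phi(Y)) = \thom\bigl(V^*(I_m \kron X)V,\, V^*(I_m \kron Y)V\bigr) \le \thom(I_m \kron X, I_m \kron Y) = \thom(X, Y).
\end{equation*}

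With the auxiliary established, the two inequalities follow by composition. For~\eqref{eq:14}, I apply Theorem~\ref{thm.comp2} to the operator monotone $f$ and then the auxiliary to $\Phi$:
\begin{equation*}
\thom(f(\Phi(X)), f(\Phi(Y))) \le \thom(\Phi(X), \Phi(Y)) \le \thom(X, Y).
\end{equation*}
For~\eqref{eq:15}, the roles are simply reversed: apply Theorem~\ref{thm.comp2} first to $f$, then the auxiliary to $\Psi$.

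The main obstacle is the auxiliary inequality itself. A seemingly natural alternative via the Choi-Kraus decomposition $\Phi(A) = \sum_i V_i^* A V_i$ combined with the Thompson-sum inequality~\eqref{eq:7} runs into trouble, because the individual summands $V_i^* X V_i$ need not be positive definite (violating the hypothesis of Theorem~\ref{thm.comp}), even when their sum is. The Stinespring dilation bypasses this cleanly by packaging everything into a single full-rank compression of $I_m \kron A$, which remains positive definite and to which Theorem~\ref{thm.comp} applies directly.
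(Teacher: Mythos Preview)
Your proof is correct. Both you and the paper reduce the corollary to the same auxiliary claim---that a completely positive strictly positive map is Thompson-nonexpansive---and then invoke Theorem~\ref{thm.comp2}; the only difference lies in how that auxiliary is established. The paper uses the Choi--Kraus decomposition $\Phi(X)=\sum_j V_j^*XV_j$ and argues termwise via Theorem~\ref{thm.comp} together with the sum bound~\eqref{eq:7}. You instead use the Stinespring form $\Phi(X)=V^*(I_m\kron X)V$, observe that $V$ must have full column rank since $\Phi(I)=V^*V\succ 0$, verify $\thom(I_m\kron X,I_m\kron Y)=\thom(X,Y)$, and apply Theorem~\ref{thm.comp} once. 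The two routes are of course closely related (stacking the Kraus operators vertically is exactly your $V$), but your packaging has a genuine advantage: your closing objection to the Kraus route is well taken, since individual $V_j$ need not have full column rank, so $V_j^*XV_j$ may be only positive \emph{semi}definite and Theorem~\ref{thm.comp} and~\eqref{eq:7} as stated (for HPD arguments) do not literally apply term by term. The paper's argument can be salvaged by extending both results to the semidefinite case via the $M(A/B)$ formulation of $\thom$, but this is not made explicit; your single-compression argument sidesteps the issue entirely at the modest cost of checking that tensoring with an identity leaves $\thom$ unchanged.
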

\begin{proof}
  We prove~\eqref{eq:14}; the proof of \eqref{eq:15} is similar, hence omitted. From Theorem~\ref{thm.comp2} it follows that $\thom(f(\Phi(X)), f(\Phi(Y))) \le \thom(\Phi(X),\Phi(Y))$. Since $\Phi$ is completely positive, it follows from a result of \citet{choi75} and \citet{kraus71} that there exist matrices $V_j \in \C^{d\times k}$, $1\le j \le dk$, such that
  \begin{equation*}
    \Phi(X) = \nlsum_{i=1}^{nk}V_j^*XV_j\qquad X \in \pp_d.
  \end{equation*}
  Theorem~\ref{thm.comp} and property~\eqref{eq:7} imply that $\thom(\Phi(X),\Phi(Y)) \le \thom(X,Y)$, which proves~\eqref{eq:14}.
\end{proof}

\subsubsection{Thompson log-nonexpansive maps}
Let $\Gc$ be a map from $\Xc \subseteq \pp_d \to \Xc$. Analogous to~\eqref{eq.17}, we say $\Gc$ is (Thompson) \emph{log-nonexpansive} if
\begin{equation*}
  \thom(\Gc(X),\Gc(Y)) \le \thom(X,Y),\qquad \forall X, Y \in \Xc;
\end{equation*}
the maps is called \emph{log-contractive} if the inequality is strict. We present now a key result that justifies our nomenclature and the analogy to~\eqref{eq.17}: it shows that the sum of a log-contractive map and a log-nonexpansive map is log-contractive. This behaviour is a striking feature of the nonpositive curvature of $\pp_d$; such a result does \emph{not} hold in normed vector spaces. 

\begin{theorem}
\label{thm.thomsum}
Let $\Gc$ be a log-nonexpansive map and $\Fc$ be a log-contractive one. Then, their sum $\Gc+\Fc$ is log-contractive.
\end{theorem}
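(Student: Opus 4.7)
The plan is to exploit the equivalent characterization
$\thom(X,Y) = \inf\{t \ge 0 : e^{-t}Y \preceq X \preceq e^t Y\}$,
which recasts nonexpansivity/contractivity as Löwner-order sandwiches compatible with addition. Fix $X \ne Y$ in the common domain, write $r := \thom(X,Y) > 0$, and abbreviate $G_X := \Gc(X)$, $F_X := \Fc(X)$ (analogously for $Y$). The hypotheses then read $a := \thom(G_X,G_Y) \le r$ and $b := \thom(F_X,F_Y) < r$, so that
\begin{equation*}
e^{-a}G_Y \preceq G_X \preceq e^{a}G_Y, \qquad e^{-b}F_Y \preceq F_X \preceq e^{b}F_Y.
\end{equation*}
Adding these two-sided Löwner inequalities immediately yields $e^{-a}G_Y + e^{-b}F_Y \preceq G_X + F_X \preceq e^{a}G_Y + e^{b}F_Y$, and the goal reduces to producing some $r' < r$ with $e^{-r'}(G_Y+F_Y) \preceq G_X+F_X \preceq e^{r'}(G_Y+F_Y)$, since this gives $\thom((\Gc+\Fc)(X),(\Gc+\Fc)(Y)) \le r' < r$.

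For the upper half I would analyse the matrix-valued map $\Psi(s) := (e^{s}-e^{a})G_Y + (e^{s}-e^{b})F_Y$ and show that $\Psi(r'_{\rm up}) \succeq 0$ for some $r'_{\rm up} < r$. At $s = r$ the first summand is $\succeq 0$ because $a \le r$, while the second is strictly positive definite: $b < r$ gives $e^r - e^b > 0$, and $F_Y \succ 0$ since $\Fc$ takes values in $\pp_d$. Hence $\Psi(r) \succeq (e^r - e^b)F_Y \succ 0$, and continuity of $\lambda_{\min}$ in $s$ then delivers some $r'_{\rm up}$ strictly less than $r$ at which $\Psi \succeq 0$. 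A symmetric argument applied to $\tilde\Psi(s) := (e^{-a}-e^{-s})G_Y + (e^{-b}-e^{-s})F_Y$ produces an analogous $r'_{\rm lo} < r$ for the lower bound. Setting $r' := \max\{r'_{\rm up}, r'_{\rm lo}\}$ closes the argument for the pair $(X,Y)$, and since $X \ne Y$ was arbitrary, $\Gc+\Fc$ is log-contractive.

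The main obstacle is the borderline case $a = r$: there $\Gc$ provides no contraction on the specific pair, and the naive sum-bound $\thom(G_X+F_X, G_Y+F_Y) \le \max\{a,b\}$ from property~\eqref{eq:7} degenerates to $r$. The continuity argument above circumvents this by using the uniform spectral gap $F_Y \succeq \lambda_{\min}(F_Y)I$ with $\lambda_{\min}(F_Y) > 0$: this strict positivity guarantees that the $F_Y$-term in $\Psi(r)$ is not merely $\succeq 0$ but genuinely positive definite, so it still dominates the (possibly vanishing) $G_Y$-term once $s$ is perturbed slightly below $r$. This is exactly the nonpositive-curvature phenomenon emphasised just before the statement: the additive constant $F_Y$ in $\pp_d$ forces strict contraction, whereas the analogous sum in a normed vector space need not be strict.
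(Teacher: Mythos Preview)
Your argument is correct and shares its opening moves with the paper's proof: both invoke the L\"owner-order characterisation $\thom(A,B)=\inf\{t\ge 0: e^{-t}B\preceq A\preceq e^{t}B\}$, obtain separate sandwich bounds for $\Gc$ and $\Fc$, and add them. The divergence is in how the strict inequality is extracted. The paper writes $\Fc(X)\preceq \lambda^{t}\Fc(Y)$ with $t<1$, substitutes into $\Hc(X)\preceq \lambda\Hc(Y)+(\lambda^{t}-\lambda)\Fc(Y)$, conjugates by $\Hc(Y)^{-1/2}$, and after several eigenvalue manipulations arrives at an explicit bound of the form
\[
\thom(\Hc(X),\Hc(Y)) < \thom(X,Y)+\log\Bigl(1+\tfrac{\lambda_{\min}(\Fc(\cdot))}{\lambda_{\max}(\Gc(\cdot))+\lambda_{\min}(\Fc(\cdot))}\bigl[e^{(t-1)\thom(X,Y)}-1\bigr]\Bigr).
\]
Your route replaces this computation by the soft continuity argument on $\Psi(s)=(e^{s}-e^{a})G_Y+(e^{s}-e^{b})F_Y$: since $\Psi(r)\succ 0$ (the $F_Y$-term is strictly positive definite because $b<r$), $\lambda_{\min}\circ\Psi$ stays positive on a left neighbourhood of $r$, yielding the desired $r'<r$ without any eigenvalue algebra. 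This is shorter and conceptually cleaner; the price is that you lose the quantitative contraction estimate the paper obtains, which matters if one later wants convergence rates rather than mere strict contractivity.
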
 
\begin{proof}
  We start by writing Thompson metric in an alternative form~\citep{lemNuss12}:
\begin{equation}
  \label{eq.thom.def2}
\thom(A,B) =\max \{\log W(A/B) , \log W(B/A) \},
\end{equation} 
where $W (A/B) := \inf\{\lambda>0, A \preceq \lambda B\}$. 
Let $\lambda=\exp(\thom(X,Y))$; then it follows that $X\preceq \lambda Y$.
 Since $\Gc$ is nonexpansive in $\thom$, using~\eqref{eq.thom.def2} it further follows that
 \begin{align*}
  \Gc(X)  \quad &\preceq \quad \lambda \Gc(Y),
  \end{align*}
  and $\Fc$ is log-contractive map, we obtain the inequality
 \begin{align*}
  \Fc(X)  \quad &\prec \quad \lambda^{t} \Fc(Y),\qquad\text{where } t \le 1.
  \end{align*}  
  Write $\Hc := \Gc + \Fc$; then, we have the following inequalities:
\begin{align*}
 \Hc(X)  \quad &\prec \quad \lambda \Hc(Y) + (\lambda^{t}-\lambda) \Fc(Y) \\
 \Hc(Y)^{-1/2}  \Hc(X)  \Hc(Y)^{-1/2} \quad &\prec \quad \lambda I +  (\lambda^{t}-\lambda)  \Hc(Y)^{-1/2} \Fc(Y)  \Hc(Y)^{-1/2} \\
 \Hc(Y)^{-1/2}  \Hc(X)  \Hc(Y)^{-1/2} \quad &\prec \quad \lambda I +  (\lambda^{t}-\lambda) \lambda_{\min}(\Hc(Y)^{-1/2} \Fc(Y)  \Hc(Y)^{-1/2}) I,
\end{align*}
As $\lambda_{\max}(\Hc(Y)^{-1/2}  \Hc(X)  \Hc(Y)^{-1/2}) >
\lambda_{\max}(\Hc(X)^{-1/2}  \Hc(Y)  \Hc(X)^{-1/2})$, using \eqref{eq.thom.def2} we obtain
\begin{equation}
\label{eq:20}
\thom(\Hc(X),\Hc(Y)) < \thom(X,Y) + \log \bigl( 1+ \lambda_{\min}(\Hc(Y)^{-1/2} \Fc(Y)  \Hc(Y)^{-1/2}) \left [ \lambda^{t-1}-1 \right ] \bigr).
\end{equation}
We also have the following eigenvalue inequality
\begin{equation}
\label{eq:21}
\lambda_{\min}(\Hc(Y)^{-1/2} \Fc(Y)  \Hc(Y)^{-1/2}) \leq \frac{\lambda_{\min}(\Fc(Y))}{\lambda_{\max}(\Gc(Y))+\lambda_{\min}(\Fc(Y))}.
\end{equation}
Combining inequalities \eqref{eq:20} and \eqref{eq:21} we see that
\begin{equation}
\label{eq:22}
  \thom(\Hc(X),\Hc(Y))
< \thom(X,Y)+\log \bigl( 1 + \tfrac{ \lambda_{\min}(\Fc(Y))}
{\lambda_{\max}(\Gc(Y))+\lambda_{\min}(\Fc(Y))}
\bigl[ \exp(\thom(X,Y))^{t-1}-1 \bigr] \bigr).
\end{equation}
Similarly, since $\lambda_{\max}\bigl(\Hc(Y)^{-1/2}  \Hc(X)  \Hc(Y)^{-1/2}\bigr) <  \lambda_{\max}\bigl(\Hc(X)^{-1/2}  \Hc(Y)\Hc(X)^{-1/2}\bigr)$,
we also obtain the bound (notice we now have $\Fc(X)$ instead of $\Fc(Y)$)
\begin{equation}
  \label{eq:23}
\thom(\Hc(X),\Hc(Y)) < \thom(X,Y)+\log \bigl( 1+ \tfrac{\lambda_{\min}(\Fc(X))}{\lambda_{\max}(\Gc(X))+\lambda_{\min}(\Fc(X))} \bigl[ \exp(\thom(X,Y))^{t-1}-1 \bigr] \bigr).
\end{equation}
Combining \eqref{eq:22} and \eqref{eq:23} into a single inequality, we get
\begin{multline*}
\thom(\Hc(X),\Hc(Y))\\ < \thom(X,Y) +\log \bigl( 1+ \tfrac{\lambda_{\min}(\Fc(X),\Fc(Y))}{\lambda_{\max}(\Gc(X),\Gc(Y))+\lambda_{\min}(\Fc(X),\Fc(Y))} \bigl[ \exp(\thom(X,Y))^{t-1}-1 \bigr] \bigr).
\end{multline*}
As the second term is $\le 0$, the inequality is strict, proving log-contractivity of $\Hc$.
\end{proof}

Using log-contractivity we can finally state our main result for this section.
\begin{theorem}
  \label{thm.ln.convg}
  If $\Gc$ is log-contractive  and equation~\eqref{eq.pz} has a solution, then this solution is unique and iteration \eqref{eq:iterform} converges to it.
\end{theorem}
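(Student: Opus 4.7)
The plan is to run a standard Edelstein-type fixed-point argument in the metric space $(\pp_d, \thom)$, adapted to the fact that log-contractivity is assumed strict but not uniform. Let $S^*$ denote a solution of \eqref{eq.pz} whose existence is postulated, and consider the Picard sequence $\{\ms_k\}$ defined by \eqref{eq:iterform}.

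First I would dispose of uniqueness. Suppose $S^*$ and $S^{**}$ were both fixed points with $S^*\neq S^{**}$; then strict log-contractivity of $\Gc$ would give
$\thom(S^*,S^{**}) = \thom(\Gc(S^*),\Gc(S^{**})) < \thom(S^*,S^{**})$,
a contradiction. Hence any fixed point is unique.

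For convergence, set $d_k := \thom(\ms_k, S^*)$. Since $\ms_{k+1} = \Gc(\ms_k)$ and $S^* = \Gc(S^*)$, strict log-contractivity forces $d_{k+1} < d_k$ whenever $\ms_k\neq S^*$, so $\{d_k\}$ decreases monotonically to some limit $d^*\ge 0$. All iterates lie in the closed Thompson ball $\bar B(S^*, d_0)$, which is a compact subset of $\pp_d$ (a known property of the Thompson metric in finite dimensions; see e.g.~\citep{lemNuss12,leeLim}). Extract a subsequence $\ms_{k_j}\to \bar\ms$ in this ball. By continuity of $\thom$, $\thom(\bar\ms, S^*) = d^*$; and because every nonexpansive map is continuous, $\Gc(\ms_{k_j}) = \ms_{k_j+1}\to \Gc(\bar\ms)$, so also $\thom(\Gc(\bar\ms), S^*) = \lim_j d_{k_j+1} = d^*$. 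If $d^* > 0$, then $\bar\ms\neq S^*$ and log-contractivity gives $\thom(\Gc(\bar\ms),\Gc(S^*)) < \thom(\bar\ms, S^*) = d^*$, contradicting the previous equality. Hence $d^* = 0$, and since $d_k\downarrow 0$ we conclude $\ms_k\to S^*$ in $\thom$.

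The main obstacle is that log-contractivity here is not assumed uniform (no explicit modulus $q<1$), so Banach's contraction theorem does not apply out of the box. This is precisely why the compactness of closed Thompson balls is the crucial ingredient: it upgrades strict contraction into genuine convergence via a subsequence argument. Once that fact about $(\pp_d,\thom)$ is invoked, everything else is routine.
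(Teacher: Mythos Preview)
Your argument is correct and is essentially the same approach as the paper's: the paper simply invokes Edelstein's fixed-point theorem for strictly contractive self-maps on a metric space, while you have unpacked precisely that argument (uniqueness by strict contraction, monotone decrease of $d_k$, compactness of Thompson balls, subsequential limit, contradiction). The only difference is granularity---the paper cites \citep{edel62} in one line, whereas you supply the details, including the key observation that closed Thompson balls in $\pp_d$ are compact, which is exactly what Edelstein's hypothesis requires.
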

\begin{proof}
  If~\eqref{eq.fixediter} has a solution, then from a theorem of \citet{edel62}, it follows that the log-contractive map $\Gc$ yields iterates that stay within a compact set and converge to a unique fixed point of $\Gc$. This fixed-point is positive definite by construction (starting from a positive definite matrix, none of the operations in~\eqref{eq.fixediter} violates positivity). Thus, the unique solution is positive definite.
\end{proof}

\subsection{Example of log-nonexpansive optimisation}
\label{sec.example}
To illustrate how to exploit log-nonexpansive functions for optimisation, let us consider the following minimisation problem
\begin{equation}
  \label{eq.main2}
  \nlmin_{\ms \succ 0}\quad \Phi(\ms) := \tfrac12 n\log\det(\ms) - \nlsum_i \log\varphi(x_i^T\inv{\ms}x_i),
\end{equation}
which arises in maximum-likelihdood estimation of ECDs (see Section~\ref{sec.ecd} for further examples and details) and also M-estimation of the scatter matrix~\citep{kent91}.

The first-order necessary optimality condition for~\eqref{eq.main2} stipulates that a candidate solution $S \succ 0$ must satisfy
\begin{equation}
  \label{eq.fonc2}
  \frac{\partial\Phi(\ms)}{\partial\ms} = 0\quad\Longleftrightarrow\quad \tfrac12 n\inv{\ms} + \sum_{i=1}^n\frac{\varphi'(\vx_i^T\inv{\ms}\vx_i)}{\varphi(\vx_i^T\inv{\ms}\vx_i)}\inv{\ms}\vx_i\vx_i^T\inv{\ms} = 0.
\end{equation}
Defining $h \equiv -\varphi'/\varphi$, \eqref{eq.fonc2} may be rewritten more compactly in matrix notation as the equation
\begin{equation}
  \label{eq.fixediter}
  \ms = \tfrac{2}{n}\nlsum_{i=1}^n \vx_ih(\vx_i^T\inv{\ms}\vx_i)\vx_i^T = \tfrac{2}{n}\mx h(\md_{\ms})\mx^T,
\end{equation}
where $h(\md_{\ms}) := \Diag(h(x_i^T\inv{\ms}x_i))$, and $\mx=[x_1,\ldots,x_m]$.
We then solve the nonlinear equation~\eqref{eq.fixediter} via a fixed-point iteration. Introducing  the nonlinear map $\Gc : \pp_d \to \pp_d$ that maps $\ms$ to the right hand side of~\eqref{eq.fixediter}, we use fixed-point iteration~\eqref{eq:iterform} to find the solution. In order to show that the Picard iteration converges (to the unique fixed-point), it is enough to show that $\Gc$ is log-contractive (see Theorem~\ref{thm.ln.convg}). The following proposition gives sufficient condition on $h$, under which the map is log-contractive. 


\begin{prop}
\label{prop.ln}
Let $h$ be log-nonexpansive. Then, the map $\Gc$ in~\eqref{eq:iterform} is log-nonexpansive. Moreover, if $h$ is log-contractive, then $\Gc$ is log-contractive.
\end{prop}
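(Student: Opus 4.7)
The plan is to factor $\Gc$ into a chain of simpler maps and verify that each link is $\thom$-nonexpansive, with strict contraction occurring at the link that applies $h$ whenever $h$ itself is log-contractive. Writing $D_S := \Diag(x_i^T S^{-1} x_i)$, I would use the decomposition
\begin{equation*}
S \;\longmapsto\; S^{-1} \;\longmapsto\; D_S \;\longmapsto\; h(D_S) \;\longmapsto\; \tfrac{2}{n}\, X h(D_S) X^T \;=\; \Gc(S),
\end{equation*}
so that $\thom$-nonexpansivity of $\Gc$ follows by chaining, and strict inequality (when $h$ is log-contractive) propagates through the subsequent nonexpansive link.

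\textbf{How I would verify the four links.} (i) Inversion is a $\thom$-isometry by~\eqref{eq:4}. (ii) Each diagonal entry $x_i^T S^{-1} x_i$ is a rank-one compression of $S^{-1}$: I would apply Theorem~\ref{thm.comp} with the full-column-rank ``matrix'' $x_i \in \C^{d\times 1}$ to get $|\log(x_i^T S^{-1} x_i)-\log(x_i^T T^{-1} x_i)| \le \thom(S^{-1},T^{-1}) = \thom(S,T)$, and since $\thom$ on positive diagonal matrices equals the maximum logarithmic ratio of diagonal entries, this yields $\thom(D_S,D_T) \le \thom(S,T)$. (iii) I would then apply log-nonexpansivity of $h$ entrywise on the diagonal:
\begin{equation*}
\thom(h(D_S),h(D_T)) \;=\; \max_i \bigl|\log h((D_S)_{ii})-\log h((D_T)_{ii})\bigr| \;\le\; \thom(D_S,D_T),
\end{equation*}
with strict inequality whenever $D_S\ne D_T$ if $h$ is log-contractive. (iv) For the synthesis step, I would use the alternative characterization~\eqref{eq.thom.def2}: one has $A \preceq e^{\thom(A,B)} B$ and $B \preceq e^{\thom(A,B)} A$ for HPD $A,B$, and congruence by $X$ preserves the L\"owner order, yielding $\thom(XAX^T, XBX^T) \le \thom(A,B)$; the scalar $\tfrac{2}{n}>0$ is a further isometry.

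\textbf{Log-contractive case and main obstacle.} For the second claim, fix $S\ne T$. If $D_S \ne D_T$, strict inequality enters at step (iii) and is preserved by the nonexpansive step (iv); if instead $D_S = D_T$, then $\Gc(S) = \Gc(T)$, so $\thom(\Gc(S),\Gc(T)) = 0 < \thom(S,T)$. Either way, $\Gc$ will be log-contractive. The main obstacle I anticipate is step (iv): Corollary~\ref{cor.comp} as stated handles only compressions $\pp_d \to \pp_k$ with $k\le d$, whereas here we need an \emph{expansion} from $n\times n$ diagonals into $\pp_d$ (with $n$ possibly larger than $d$). I would therefore spell out the one-line L\"owner-order argument from~\eqref{eq.thom.def2} rather than invoking Corollary~\ref{cor.comp}, which keeps the proof dimension-agnostic.
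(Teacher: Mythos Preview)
Your proof is correct and follows essentially the same chain as the paper's, which invokes Theorem~\ref{thm.comp} (compression by $X^T$), the diagonal structure, log-nonexpansivity of $h$, another compression (by each $x_i$), and the inversion isometry~\eqref{eq:4}. Your concern in step~(iv) is unfounded: since the data span $\reals^d$ we have $n\ge d$ and $X^T\in\C^{n\times d}$ has full column rank, so $A\mapsto XAX^T=(X^T)^*A(X^T)$ is exactly a compression covered by Theorem~\ref{thm.comp}---though your direct L\"owner-order argument is equally valid, and your case split ($D_S\ne D_T$ versus $D_S=D_T$) for the contractive claim is in fact more careful than the paper's one-line justification.
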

\begin{proof}
  Let $\ms, \mr \succ 0$ be arbitrary. Then, we have the following chain of inequalities
  \begin{align*}
    \thom(\Gc(\ms),\Gc(\mr)) &=\quad\thom\bigl(\tfrac{2}{n}\mx h(\md_{\ms})\mx^T,\ \tfrac{2}{n}\mx h(\md_{\mr})\mx^T\bigr)\\
    &\le\quad\thom\bigl(h(\md_{\ms}), h(\md_{\mr}) \bigr)\quad\le\quad\max_{1\le i \le n}\thom\bigl(h(\vx_i^T\inv{\ms}\vx_i), h(\vx_i^T\inv{\mr}\vx_i)\bigr)\\
    &\le\quad\max_{1\le i \le n}\thom\bigl(\vx_i^T\inv{\ms}\vx_i, \vx_i^T\inv{\mr}\vx_i\bigr)\quad\le\quad\thom\bigl(\inv{\ms},\inv{\mr}\bigr) = \thom(\ms,\mr).
  \end{align*}
  The first inequality follows from~\eqref{eq:5} and Theorem~\ref{thm.comp}; the second inequality follows since $h(\md_{\ms})$ and $h(\md_{\mr})$ are diagonal; the third follows from~\eqref{eq:7}; the fourth from another application of Theorem~\ref{thm.comp}, while the final equality is via~\eqref{eq:4}. This proves log-nonexpansivity (i.e., nonexpansivity in $\thom$). If in addition
  $h$ is log-contractive and $\ms \neq \mr$, then the second inequality above is strict, that is,
  \begin{equation*}
    \thom(\Gc(\ms),\Gc(\mr)) < \thom(\ms,\mr) \quad \forall \ms,\mr \quad \text{and}\quad \ms \neq \mr.
  \end{equation*}
\end{proof}

If $h$ is merely log-nonexpansive (not log-contractive), it is still possible to show uniqueness of~\eqref{eq.fixediter} up to a constant. Our proof depends on the compression property of $\thom$ proved in Theorem~\ref{thm.comp}.

\begin{theorem}
\label{thm.thomln}
 Let the data $\Xc=\set{x_1,\ldots,x_n}$ span the whole space. If $h$ is LN, and $\ms_1\neq \ms_2$ are solutions to equation~\eqref{eq.fixediter}, then iteration \eqref{eq:iterform} converges to a solution, and $\ms_1 \propto \ms_2$.
\end{theorem}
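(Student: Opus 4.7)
My plan is to extract the proportionality $\ms_1\propto\ms_2$ from the rigidity produced by equality in the nonexpansivity chain of Proposition~\ref{prop.ln}, and then deduce convergence from the monotonicity of Thompson distances along the iterates. Because $h$ is LN, Proposition~\ref{prop.ln} gives $\thom(\Gc(\ms),\Gc(\mr))\le\thom(\ms,\mr)$, and revisiting its proof exposes the explicit chain
\begin{equation*}
\thom(\Gc(\ms_1),\Gc(\ms_2)) \le \thom(h(\md_{\ms_1}),h(\md_{\ms_2})) \le \max_i\thom(h(a_i),h(b_i)) \le \max_i\thom(a_i,b_i) \le \thom(\ms_1^{-1},\ms_2^{-1}) = \thom(\ms_1,\ms_2),
\end{equation*}
with $a_i:=x_i^T\ms_1^{-1}x_i$ and $b_i:=x_i^T\ms_2^{-1}x_i$. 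If $\ms_1,\ms_2$ are both fixed points of $\Gc$, the two outer ends coincide, so every intermediate inequality must be tight.

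For the proportionality claim, I would extract rigidity from each tight step. Tightness of the first compression step (Theorem~\ref{thm.comp}) forces the extremal generalized eigenvector of the diagonal pencil $(h(\md_{\ms_1}),h(\md_{\ms_2}))$ to lie in $\range(\mx^T)$; tightness of the later compression step forces the extremal generalized eigenvector of $(\ms_1^{-1},\ms_2^{-1})$ to be directed along some data point $x_i$; and tightness of the LN step demands $|\log h(a_i)-\log h(b_i)|=|\log a_i-\log b_i|$ at the active index. Since $\mx$ has full row rank under the spanning hypothesis, combining these three rigidity conditions (with a case split on whether $\thom(\ms_1,\ms_2)$ is realized at the top or bottom pencil eigenvalue) forces the ratios $h(a_i)/h(b_i)$ to equal a common constant $c>0$ across all $i$, i.e.\ $h(\md_{\ms_1})=c\,h(\md_{\ms_2})$. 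Substituting back into the fixed-point identity $\ms_j=\tfrac{2}{n}\mx h(\md_{\ms_j})\mx^T$ then gives $\ms_1=c\,\ms_2$.

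For convergence, the log-nonexpansivity of $\Gc$ implies $k\mapsto\thom(\Gc^k(\ms_0),\ms_1)$ is nonincreasing, so the orbit stays inside a Thompson-closed ball around $\ms_1$, which is compact in $\pp_d$ (see \citep{lemNuss12}); a convergent subsequence therefore exists, and continuity of $\Gc$ identifies its limit as a solution. Since the proportionality just established forces solutions to form a single ray along which the monotonic functions $k\mapsto\thom(\Gc^k(\ms_0),c\ms_1)$ vary coherently over all scalings $c>0$, subsequential convergence is promoted to full convergence to some point on that ray. The main obstacle will be the rigidity step: translating ``tightness of compression on $\range(\mx^T)$'' into ``constant diagonal ratios $h(a_i)/h(b_i)$'' relies on a careful geometric use of the spanning hypothesis, since generically $\range(\mx^T)$ contains no standard basis vector and so tightness in the first compression step can occur only when the diagonal pencil is scalar; handling non-generic data configurations that the spanning hypothesis alone admits will likely require exploiting both compression tightness conditions simultaneously together with the tightness of the LN inequality.
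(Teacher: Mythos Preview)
Your approach differs from the paper's, and the gap you yourself flag is genuine---the paper sidesteps it entirely.

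The paper does not try to extract full rigidity from the equality chain. Instead it argues by contradiction: normalise so that $\ms_1=I$, suppose $\ms_2\neq cI$ for every $c>0$, and use the spanning hypothesis to locate a single datum $x_j$ whose Rayleigh quotient $x_j^T\ms_2^{-1}x_j/x_j^Tx_j$ lies strictly inside the spectral interval of $\ms_2^{-1}$, so that
\[
\left|\log\tfrac{x_j^T\ms_2^{-1}x_j}{x_j^Tx_j}\right|<\thom(\ms_2,I).
\]
This makes the single rank-one summand $\ms\mapsto x_j h(x_j^T\ms^{-1}x_j)x_j^T$ log-contractive between $\ms_1$ and $\ms_2$, while the remaining summands are merely log-nonexpansive by Proposition~\ref{prop.ln}. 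The paper then invokes Theorem~\ref{thm.thomsum} (the sum of a log-contractive and a log-nonexpansive map is log-contractive) to conclude $\thom(\Gc(\ms_2),\Gc(\ms_1))<\thom(\ms_2,\ms_1)$, contradicting that both are fixed points.

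Your rigidity route aims for more than is needed: you want $h(\md_{\ms_1})=c\,h(\md_{\ms_2})$, i.e.\ equal ratios at \emph{every} index, but tightness in the two compression steps only pins down the \emph{extremal} index. As you correctly note, once $\range(\mx^T)$ meets a proper coordinate subspace (which the spanning hypothesis does not prevent), tightness of the first compression no longer forces the diagonal pencil to be scalar, and your sketch offers no concrete mechanism to close this case. The paper's argument needs nothing of the sort: one strictly contracted summand suffices, and Theorem~\ref{thm.thomsum}---which your proposal never invokes---propagates that single strict inequality to all of $\Gc$. Your convergence paragraph (nonexpansivity, Thompson-ball compactness, subsequential limits) is reasonable and indeed more explicit than what the paper writes; but without a clean proportionality argument the promotion from subsequential to full convergence remains unfinished.
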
 
\begin{proof}
Without loss of generality assume that $\ms_1=I$. Let $\ms_2 \neq c I$. Theorem~\ref{thm.comp} implies that
\begin{align*}
  &\thom\bigl(\vx_i h(\vx_i^T\inv{\ms_2}\vx_i) \vx_i^T, \vx_i h(\vx_i^T\inv{\ms_1}\vx_i) \vx_i \bigr)\\
    &\le\quad \thom\bigl(h(\vx_i^T\inv{\ms_2}\vx_i), h(\vx_i^T \vx_i)\bigr) \le\quad \thom\bigl(\vx_i^T\inv{\ms_2}\vx_i, \vx_i^T\vx_i\bigr) =\quad \left|\log \tfrac{\vx_i^T\inv{\ms_2}\vx_i}{\vx_i^T\vx_i}  \right |.
\end{align*}
As per assumption, the data span the whole space. Since $\ms_2 \neq c I$, we can find $\vx_1$ such that
\begin{equation*}
\left |\log  \tfrac{\vx_1^T\inv{\ms_2}\vx_1}{\vx_1^T\vx_1}  \right | \quad<\quad \thom(\ms_2,I).
\end{equation*}
Therefore, we obtain the following inequality for point $\vx_1$:
\begin{equation}
\thom\bigl(\vx_1 h(\vx_i^T\inv{\ms_2}\vx_1) \vx_1^T, \vx_1 h(\vx_1^T\inv{\ms_1}\vx_1) \vx_1 \bigr) \quad<\quad \thom(\ms_2,\ms_1). 
\end{equation}
Using Proposition~\ref{prop.ln} and invoking Theorem~\ref{thm.thomsum}, it then follows that
\begin{equation*}
\thom(\Gc(\ms_2),\Gc(\ms_1)) \quad < \quad \thom(\ms_2,\ms_1).
\end{equation*}
But this means that $\ms_2$ cannot be a solution to~\eqref{eq.fixediter}, a contradiction. Therefore, $\ms_2 \propto \ms_1$.
\end{proof}

\subsubsection{Computational efficiency}
So far we did not address computational efficacy of the fixed-point algorithm. The rate of convergence depends heavily on the contraction factor, and as we will see in the experiments, without further care one obtains poor contraction factors that can lead to a very slow convergence. We briefly discuss below a useful speedup technique that seems to have a dramatic impact on the empirical convergence speed (see Figure~\ref{fig.dim}).

At the fixed point $\ms^*$ we have $\Gc(\ms^*)=\ms^*$, or equivalently for a new map $\Mc$ we have $$\Mc(\ms^*):={\ms^*}^{-1/2}\Gc(\ms^*){\ms^*}^{-1/2}=I.$$ Therefore, one way to analyse the convergence behaviour is to assess how fast $\Mc(\ms_k)$ converges to identity.  Using the theory developed beforehand, it is easy to show that
\begin{equation*}
\thom(M(\ms_{k+1}),I) \quad \leq \quad \eta \thom(M(\ms_k),I),
\end{equation*}
where $\eta$ is the contraction factor between $S_k$ and $S_{k+1}$, so that
\begin{equation*}
\thom(\Gc(\ms_{k+1}),\Gc(\ms_k)) < \eta \thom(\ms_{k+1},\ms_k).
\end{equation*}
To increase the convergence speed we may replace $\ms_{k+1}$ by its scaled version $\alpha_k\ms_{k+1}$ such that
\begin{equation*}
\thom(\Mc(\alpha_k\ms_{k+1}),I) \leq \thom(\Mc(\ms_{k+1}),I).
\end{equation*}
One can do a search to find a good $\alpha_k$. Clearly, the sequence $\ms_{k+1}=\alpha_k \Gc(\ms_k)$ converges at a faster pace. We will see in the numerical results section that scaling with $\alpha_k$ has a remarkable effect on the convergence speed. An intuitive reasoning why this happens is that the additional scaling factor can resolve the problematic cases where the contraction factor become small. These problematic cases are those where both the smallest and the largest eigenvalues of $\Mc(\ms_k)$ become smaller (or larger) than one, whereby the contraction factor (for $\Gc$) becomes small, which may lead to a very slow convergence. The scaling factor, however, makes the smallest eigenvalues of $\Mc(\ms_k)$ always smaller and its largest eigenvalue larger than one. One way to avoid the search is to choose $\alpha_k$ such that $\text{trace}(\Mc(\ms_{k+1}))=d$---though with a small caveat: empirically this simple choice of $\alpha_k$ works very well, but our  convergence proof does not hold anymore. Extending our convergence theory to incorporate this specific choice of scaling $\alpha_k$ is a part of our future work. In all simulations in the result section $\alpha_k$ is selected by ensuring $\text{trace}(\Mc(\ms_{k+1}))=d$.
\section{Application to Elliptically Contoured Distributions}
\label{sec.ecd}

In this section we present details for a concrete application of conic geometric optimisation:  mle for ECDs~\citep{cambanis81,gupta99,muirhead82}. We use ECDs as a platform for illustrating geometric optimisation because ECDs are widely important (see e.g., the survey~\citep{ollila11}), and are instructive in illustrating our theory. 

First, some basics. If an ECD has density on $\reals^d$, it assumes the form\footnote{For simplicity we describe only mean zero families; the extension to the general case is easy.}
\begin{equation}
  \label{eq.1}
  \forall\ x \in \reals^d,\qquad \Escr_\varphi(x;\ms) \propto \det(\ms)^{-1/2}\varphi(\vx^T\ms^{-1}\vx),
\end{equation}
where $\ms \in \pp_d$ is the scatter matrix and $\varphi : \reals \to \reals_{++}$ is the \emph{density generating function} (dgf). If the ECD has finite covariance, then the scatter matrix is proportional to the covariance matrix~\citep{cambanis81}.

\begin{example}
  Let $\varphi(t)=e^{-t/2}$; then, \eqref{eq.1} reduces to the multivariate Gaussian density. For 
  \begin{equation}
    \label{eq.24}
    \varphi(t)=t^{\alpha-d/2} \exp \bigl( -(t/b)^\beta\bigr),
  \end{equation}
where $\alpha$, $b$, $\beta >0$ are fixed, density~\eqref{eq.1} yields the rich class called \emph{Kotz-type distributions} that have  powerful modelling abilities~\citep[\S3.2]{kotz}; they include as special cases multivariate power exponentials, elliptical gamma, multivariate W-distributions, for instance. Other examples include multivariate student-t, multivariate logistic, and Weibull dgfs (see \S\ref{sec.ecd.gc}).
\end{example}

\subsection{Maximum likelihood parameter estimation}

Let $(\vx_1,\ldots,\vx_n)$ be i.i.d.\ samples from an ECD~$\Escr_\varphi(\ms)$. Ignoring constants, the log-likelihood is
\begin{equation}
  \label{eq.lh}
  \Lc(\vx_1,\ldots,\vx_n; \ms) = -\tfrac12 n\log\det\ms + \nlsum_{i=1}^n\log \varphi(\vx_i^T\inv{\ms}\vx_i).
\end{equation}
To compute a mle we equivalently consider the  minimisation problem~\eqref{eq.main2}, which we restate here for convenience
\begin{equation}
  \label{eq.main}
  \nlmin_{\ms \succ 0}\quad \Phi(\ms) := \tfrac12 n\log\det(\ms) - \nlsum_i \log\varphi(x_i^T\inv{\ms}x_i).
\end{equation}
Unfortunately, \eqref{eq.main} is in general very difficult: $\Phi$ may be nonconvex and may have multiple local minima (observe that $\log\det(S)$ is concave in $S$ and we are minimising). Since statistical estimation relies on having access to globally optimal estimates, it is important to be able to solve~\eqref{eq.main} globally. These difficulties notwithstanding, using our theory we identify a rich class of ECDs for which we can solve~\eqref{eq.main} globally. Some examples are already known~\citep{ollila11,kent91,zhang13}, but our techniques yield results strictly more general: they subsume previous examples while advancing the broader idea of geometric optimisation over HPD matrices.

Building on \S\ref{sec.gc} and \S\ref{sec.ln}, we divide our study into the following three classes of dgfs:
\begin{enumerate}[(i)]
  \setlength{\itemsep}{-1pt}
\item Geodesically convex (g-convex): This class contains functions for which the negative log-likelihood $\Phi(S)$ is g-convex. Some members of this class have been previously studied (though sometimes without recognising or directly exploiting g-convexity);
\item \textbf{L}og-\textbf{N}onexpansive (LN): This is a new class introduced in this paper. It exploits the ``non-positive curvature'' property of the HPD manifold. To our knowledge, this class of ECDs was beyond the grasp of previous methods~\citep{zhang13,kent91,wie12}. The iterative algorithm for finding the global minimum of the objective is similar to that of the class LC.
\item \textbf{L}og-\textbf{C}onvex (LC): We cover this class for completeness; it covers the case of log-convex $\varphi$, but leads to nonconvex $\Phi$ (due to the $-\log\varphi$ term). However, the structure of the problem is such that one can derive an efficient algorithm for finding a local minumum of the objective function.
\end{enumerate}
As illustrated in Figure~\ref{fig.fc}, these three classes can overlap. When a function is in the overlap between classes LC and g-convex, one can be sure that the iterative algorithm derived for the class LN will converge to a unique minimum. Table~\ref{tab:algo} summerizes the applicability of fixed-point or manifold optimization methods on different classes of dgfs.
\begin{figure}[h]
  \centering
  \includegraphics[scale=0.6]{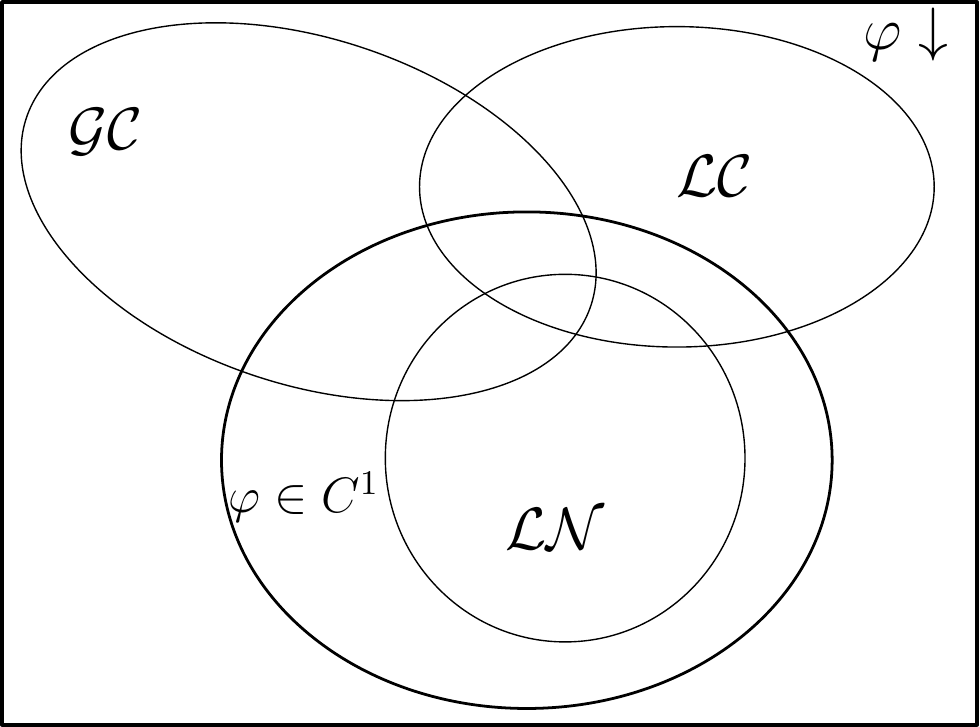}
  \caption{Overview of dgf functions classes for nonincreasing $\varphi$.}
  \label{fig.fc}
\end{figure}

\begin{table}[htpb]\small
  \centering
  \begin{tabular}{c|c|c}
    Problem Class & Manifold Opt. & Fixed-Point \\
    \hline
    $\cal GC$  & \textbf{Yes} & Can$^\star$\\
    $\cal LN$  & Can & \textbf{Yes} \\
    $\cal LC$  & Can & \textbf{Yes} 
  \end{tabular}
  \caption{\small Applicability of the different algorithms: `\textbf{Yes}' means a preferred algorithm; `Can$^\star$' denotes applicability on a case-by-case basis; `Can' signifies possible applicability of method.}
  \label{tab:algo}
\end{table}

\subsection{MLE for distributions in class g-convex}
\label{sec.ecd.gc}
If the log-likelihood is strictly g-convex then~\eqref{eq.main} cannot have multiple solutions. Moreover, for any local optimisation method that ensures a local solution to~\eqref{eq.main}, g-convexity ensures that this solution is globally optimal. 

First we state a corollary of Theorem~\ref{thm.gc} that helps recognise g-convexity of ECDs. We remark that a result equivalent to Corollary~\ref{cor.ecd} was also recently discovered in~\citep{zhang13}. Theorem~\ref{thm.gc} is more general and uses a completely different argument founded on matrix-theoretic results. 
\begin{corr}
  \label{cor.ecd}
  Let $h: \reals_{++} \to \reals$ be g-convex (i.e., $h(x^{1-\lambda} y^{\lambda}) \le (1-\lambda) h(x)+\lambda h(y)$). If $h$ is nondecreasing, then for $r\in\set{\pm1}$, $\phi: \pp_d \to \reals : S \mapsto \nlsum_i h(x_i^TS^rx_i) \pm\log\det(S)$ is g-convex. Furthermore if $h$ is strictly g-convex, then $\phi$ is also strictly g-convex.
\end{corr}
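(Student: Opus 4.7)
The plan is to reduce the corollary to a direct application of Theorem~\ref{thm.gc} plus the g-linearity of $\log\det$. Since sums of g-convex functions are g-convex, decompose
$\phi(S) = \sum_{i=1}^n \phi_i(S) \pm \log\det(S),$
where $\phi_i(S) := h(x_i^T S^r x_i)$, and treat the two kinds of summands separately.

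For each $\phi_i$, I would view the map $\Phi_i : \ms \mapsto x_i^T \ms x_i$ as a strictly positive linear map from $\pp_d$ to $\pp_1 \cong \reals_{++}$ (strictly positive because $x_i \ne 0$). Since $\pp_1$ is the positive half-line, g-convexity of $h$ in the sense of Section~\ref{sec.gc} is exactly the inequality $h(x^{1-t}y^t) \le (1-t)h(x)+th(y)$ hypothesized in the corollary, and monotonicity in the L\"owner order on $\pp_1$ is ordinary monotonicity on $\reals_{++}$. Theorem~\ref{thm.gc} (with the scalar-valued $h$ and exponent $r\in\{\pm 1\}$) then yields g-convexity of $\phi_i(\ms) = h(\Phi_i(\ms^r))$ directly.

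For the remaining term, I would invoke Remark~\ref{rmk.det}: $\log\det$ is g-linear, i.e., a straightforward computation with the geodesic~\eqref{eq.14} gives
\begin{equation*}
  \log\det(A\gm_t B) = \log\det A + t\log\det(A^{-1/2}BA^{-1/2}) = (1-t)\log\det A + t\log\det B,
\end{equation*}
so both $+\log\det$ and $-\log\det$ are g-convex (in fact g-affine). Adding this to the g-convex sum $\sum_i \phi_i$ preserves g-convexity, proving the first claim.

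For the strict g-convexity clause, the only delicate point is that the added $\pm\log\det$ term is g-affine and hence contributes no strictness. So strictness must come from the $\phi_i$. If $h$ is strictly g-convex, then along any geodesic $\ms\gm_t\mr$ with $\ms\neq\mr$, strict inequality is obtained provided at least one $i$ satisfies $x_i^T\ms^r x_i \neq x_i^T\mr^r x_i$; under the standard assumption that the data $\{x_i\}$ span $\reals^d$ (the natural non-degeneracy condition for scatter estimation, also used in Theorem~\ref{thm.thomln}), this holds, so at least one $\phi_i$ is strictly decreased along the geodesic while the rest of the sum and the $\pm\log\det$ term satisfy the non-strict inequality. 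Combining gives strict g-convexity of $\phi$. The main subtlety is really just this bookkeeping for the strict case; the g-convexity itself is almost immediate once Theorem~\ref{thm.gc} and the g-affinity of $\log\det$ are in hand.
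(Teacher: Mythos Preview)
Your proof is correct and follows exactly the paper's approach: the paper's own proof is the single line ``Immediate from Theorem~\ref{thm.gc} since $x_i^TS^rx_i$ is a positive linear map,'' and you have simply fleshed this out together with the g-linearity of $\log\det$ from Remark~\ref{rmk.det}. Your treatment of the strict case is in fact more careful than the paper's, since you correctly note that a spanning condition on the $\{x_i\}$ is needed for strictness to survive the passage through the positive linear map, a point the paper leaves implicit.
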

\begin{proof}
  Immediate from Theorem~\ref{thm.gc} since $x_i^TS^rx_i$ is a positive linear map.
\end{proof}

For reference, we summarise several examples of strictly g-convex ECDs in Corollary~\ref{corollary.kotz}.
\begin{corr}
  \label{corollary.kotz}
  The negative log-likelihood~\eqref{eq.main} is strictly g-convex for the following distributions:
  \begin{inparaenum}[(i)]
  \item Kotz with $\alpha \leq \tfrac{d}{2}$ (its special cases include Gaussian, multivariate power exponential, multivariate W-distribution with shape parameter smaller than one, elliptical gamma with shape parameter $\nu \leq \tfrac{d}{2}$);
  \item Multivariate-$t$;
  \item Multivariate Pearson type II with positive shape parameter;
  \item Elliptical multivariate logistic distribution.
  \end{inparaenum}\footnote{The dgfs of different distributions are brought here for the reader's convenience. Multivariate power exponential: $\phi(t) = \exp(-t^\nu/b),\quad \nu>0$; Multivariate W-distribution: $\phi(t) = t^{\nu-1}\exp(-t^\nu/b),\quad \nu>0$; Elliptical gamma: $\phi(t) = t^{\nu-d/2}\exp(-t/b),\quad \nu>0$; Multivariate t: $\phi(t) = (1+t/\nu)^{-(\nu+d)/2},\quad \nu>0$; Multivariate Pearson type II: $\phi(t) = (1-t)^{\nu},\quad \nu>-1, 0\leq t\leq 1$; Elliptical multivariate logistic: $\phi(t) = \exp({-\sqrt{t}})/(1+\exp({-\sqrt{t}}))^2$.
  }
\end{corr}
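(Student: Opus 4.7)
The plan is to reduce every case to Corollary~\ref{cor.ecd} with the choice $r=-1$ and the sign $+\log\det$. Writing $h(t) := -\log\varphi(t)$, the negative log-likelihood becomes
\begin{equation*}
\Phi(S) = \tfrac{n}{2}\log\det(S) + \nlsum_i h(\vx_i^T S^{-1}\vx_i),
\end{equation*}
so by Corollary~\ref{cor.ecd} strict g-convexity of $\Phi$ follows as soon as $h:\reals_{++}\to\reals$ is nondecreasing and strictly g-convex. Recall that g-convexity of a function on $\reals_{++}$ is equivalent to convexity of $h\circ\exp$ on $\reals$; \emph{strict} g-convexity is equivalent to strict convexity of $h\circ\exp$. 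The proof therefore reduces to checking, for each of the four families listed, (a) monotonicity of $h$ on $\reals_{++}$ (possibly requiring the stated parameter restriction), and (b) strict convexity of the univariate map $u\mapsto h(e^u)$.

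For the Kotz family, using the dgf~\eqref{eq.24} one gets $h(t) = (\tfrac{d}{2}-\alpha)\log t + (t/b)^{\beta}$, so $h(e^u)=(\tfrac{d}{2}-\alpha)u+b^{-\beta}e^{\beta u}$, which is strictly convex as soon as $\beta>0$; moreover $h'(t)=(\tfrac{d}{2}-\alpha)/t+\beta b^{-\beta}t^{\beta-1}\ge 0$ iff $\alpha\le d/2$. This single computation subsumes the Gaussian ($\alpha=d/2,\beta=1$), multivariate power exponential ($\alpha=d/2$), multivariate W-distribution ($\alpha=\nu-1+d/2$, $\beta=\nu$, requiring $\nu\le 1$), and elliptical gamma ($\alpha=\nu$, $\beta=1$, requiring $\nu\le d/2$). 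For the multivariate-$t$ one has $h(t)=\tfrac{\nu+d}{2}\log(1+t/\nu)$, and a direct computation of $\tfrac{d^2}{du^2}\log(1+e^u/\nu)=\nu e^u/(\nu+e^u)^2>0$ gives strict convexity, while $h'>0$ is immediate. For multivariate Pearson type II with $\nu>0$, $h(t)=-\nu\log(1-t)$ on $t\in[0,1)$ is clearly increasing, and differentiating $-\nu\log(1-e^u)$ twice yields $\nu e^u/(1-e^u)^2>0$ for $u<0$.

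The only case that is not just a one-line check is the elliptical multivariate logistic. Here the useful reformulation is
\begin{equation*}
\varphi(t)=\frac{e^{-\sqrt{t}}}{(1+e^{-\sqrt{t}})^2}=\frac{1}{4\cosh^{2}(\sqrt{t}/2)},\qquad h(t)=\log 4+2\log\cosh(\sqrt{t}/2).
\end{equation*}
Monotonicity follows from $h'(t)=\tanh(\sqrt{t}/2)/(2\sqrt{t})>0$. For strict g-convexity, set $v(u):=\tfrac{1}{2}e^{u/2}$ so that $h(e^u)=\log 4+2\log\cosh(v(u))$; differentiating twice in $u$ produces a sum of two manifestly positive terms (one proportional to $\tanh(v)\,v$, one proportional to $\operatorname{sech}^2(v)\,v^2$), giving strict convexity on all of $\reals$. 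This logistic case is the main technical obstacle: unlike the other three families, $h\circ\exp$ is not expressible as a simple composition of a power and an exponential, and one has to verify positivity of the second derivative by a direct calculation. Once all four families are so verified, Corollary~\ref{cor.ecd} delivers strict g-convexity of $\Phi$ and hence the claim.
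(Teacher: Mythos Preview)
Your proposal is correct and follows exactly the route the paper intends: the corollary is stated without proof, as an immediate consequence of Corollary~\ref{cor.ecd}, and your case-by-case verification that $h=-\log\varphi$ is nondecreasing and strictly g-convex (i.e., $h\circ\exp$ strictly convex) is precisely the check that corollary requires. The computations for all four families, including the $\cosh$ reformulation for the logistic case, are accurate.
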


Even though g-convexity ensures that every local solution will be globally optimal, we must first ensure that there exists a solution at all, that is,  \emph{does~\eqref{eq.main} have a solution?} Answering this question is nontrivial even in special cases~\citep{kent91,zhang13}. We provide below a fairly general result that helps establish existence. 

\begin{theorem}
\label{thm.existMin}
Let $\Phi(S)$ satisfy the following: (i) $-\log\varphi(t)$ is lower semi-continuous (lsc) for $t>0$, and (ii) $\Phi(S) \to \infty$ as $\norm{S} \to \infty$ or $\norm{S^{-1}} \to \infty$, then $\Phi(S)$ attains its minimum.
\end{theorem}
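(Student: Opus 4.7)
The plan is to use a standard coercivity plus lower semi-continuity argument, adapted to the open manifold $\pp_d$. First, pick any $S_0 \succ 0$ and consider the sublevel set
\begin{equation*}
  L := \{S \succ 0 : \Phi(S) \le \Phi(S_0)\}.
\end{equation*}
Condition (ii) says that $\Phi(S) \to \infty$ whenever $\|S\| \to \infty$ or $\|S^{-1}\| \to \infty$; equivalently, whenever the largest eigenvalue of $S$ blows up or the smallest eigenvalue of $S$ collapses to $0$. Hence there exist constants $0 < m \le M < \infty$ such that every $S \in L$ satisfies $mI \preceq S \preceq MI$. In particular, $L$ is contained in the compact set $K := \{S \in \pp_d : mI \preceq S \preceq MI\}$, which is a closed and bounded subset of the space of Hermitian matrices and, crucially, stays inside the open cone $\pp_d$.

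Next, I would argue that $\Phi$ is lower semi-continuous on $\pp_d$. Indeed, $S \mapsto \tfrac{1}{2}n\log\det(S)$ is continuous on $\pp_d$, and for each fixed $x_i \in \reals^d$ the map $S \mapsto x_i^T S^{-1} x_i$ is continuous and strictly positive on $\pp_d$ (since $S^{-1}$ is continuous in $S$ on the open cone). By hypothesis (i), $t \mapsto -\log\varphi(t)$ is lsc on $(0,\infty)$, and composing an lsc function with a continuous one yields an lsc function. Summing finitely many lsc functions with a continuous one preserves lsc, so $\Phi$ is lsc on $\pp_d$.

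Finally, restricting $\Phi$ to the compact set $K \supseteq L$, the classical Weierstrass theorem for lsc functions on compact sets guarantees that $\Phi$ attains its infimum over $K$. Since any minimiser of $\Phi$ over $\pp_d$ must lie in $L \subseteq K$ (values of $\Phi$ outside $L$ exceed $\Phi(S_0)$ and hence cannot be minimal), a minimiser over $K$ is a minimiser over $\pp_d$. This completes the argument.

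The only delicate point, and thus the main obstacle, is verifying that the minimiser does not escape the open cone $\pp_d$, i.e., that the limit of a minimising sequence cannot be a singular PSD matrix. This is precisely where the two-sided coercivity in (ii) is essential: it rules out both escape to infinity and degeneration to the boundary $\partial\pp_d$, so the compact set $K$ lies strictly inside $\pp_d$ and $\Phi$ remains well-defined (and lsc) throughout.
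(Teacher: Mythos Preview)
Your argument is correct and follows essentially the same strategy as the paper: coercivity from (ii) confines sublevel sets to a compact region of $\pp_d$, lsc of $\Phi$ follows from (i) plus continuity of the remaining terms, and then a Weierstrass-type argument yields the minimiser. The only cosmetic difference is that the paper phrases compactness via the Riemannian metric $d_R$ on $\pp_d$ (where ``$\|S\|\to\infty$ or $\|S^{-1}\|\to\infty$'' is precisely $d_R(S,I)\to\infty$, so (ii) gives bounded lower-level sets in $(\pp_d,d_R)$) and then cites a general variational-analysis result, whereas you work directly with an explicit compact box $K=\{mI\preceq S\preceq MI\}$ in the ambient Euclidean space of Hermitian matrices; the underlying content is identical.
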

\begin{proof}
Consider the metric space $(\pp_d,d_R)$, where $d_R$ is the Riemannian distance,
\begin{equation}
  \label{eq.metr}
  d_R(A,B)= \frob{\log (A^{-1/2} B  A^{-1/2})}\qquad A, B \in \pp_d.
\end{equation}
If $\Phi(S) \to \infty$ as $\norm{S} \to \infty$ or as $\norm{S^{-1}} \to \infty$, then $\Phi(S)$ has bounded lower-level sets in $(\pp_d,d_R)$. It is a well-known result in variational analysis that an lsc function which has bounded lower-level sets in a metric space attains its minimum~\citep{rockwets}. Since  $-\log\varphi(t)$ is lsc and  $\log\det(S^{-1})$ is continuous, $\Phi(S)$ is lsc on  $(\pp_d,d_R)$. Therefore it attains its minimum.
\end{proof}

A key consequence of this theorem is its utility is in showing existence of solutions to~\eqref{eq.main} for a variety of different ECDs. We show an example application to Kotz-type distributions~\citep{kotz,kotz67} below.
For these distributions, the function $\Phi(\ms)$ assumes the form
\begin{equation}
\label{eq.kotzlog}
K(\ms)= \tfrac{n}{2}\log\det(\ms) + (\tfrac d 2- \alpha) \nlsum_{i=1}^n\log \vx_i^T\inv{\ms}\vx_i +  \nlsum_{i=1}^n \left ( \tfrac{\vx_i^T\inv{\ms}\vx_i}{b}\right )^\beta.
\end{equation}
Lemma~\ref{lem.Kotzinfty} shows that $K(\ms) \to \infty $ whenever $\norm{\ms^{-1}} \to \infty$ or $\norm {\ms} \to \infty$. 
\begin{lemma}
\label{lem.Kotzinfty}
Let the data $\Xc=\set{x_1,\ldots,x_n}$ span the whole space and for $\alpha < \tfrac d2$ satisfy
\begin{equation}
\label{eq.condF}
\frac{|\Xc \cap L|}{|\Xc|}<\frac{d_L}{d-2\alpha},
\end{equation}
where $L$ is an arbitrary subspace with dimension $d_L<d$ and $|\Xc \cap L|$ is the number of datapoints that lie in the subspace $L$.
If $\norm{\ms^{-1}} \to \infty$ or $\norm {\ms} \to \infty$, then $K(\ms) \to \infty $.
\end{lemma}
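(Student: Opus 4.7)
The plan is to argue by contradiction. Suppose $K(\ms_n)$ stays bounded along a sequence with $\norm{\ms_n}\to\infty$ or $\norm{\ms_n^{-1}}\to\infty$. Pass to a subsequence so that the orthonormal eigenvectors $U_n$ of $\ms_n$ converge to a limit basis $[u_1^\infty,\ldots,u_d^\infty]$ and each eigenvalue $\lambda_j^{(n)}$ (sorted so $\lambda_1^{(n)}\le\cdots\le\lambda_d^{(n)}$) has a definite asymptotic behaviour: bounded away from $0$ and $\infty$, tending to $0$, or tending to $\infty$. The strategy is to split into the two genuinely different regimes.

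If some $\lambda_j^{(n)}\to 0$, equivalently $\lambda_1^{(n)}\to 0$, the fraction hypothesis is not needed. Since $\Xc$ spans $\reals^d$, some datum $\vx_{i_0}$ satisfies $(u_1^\infty)^\top \vx_{i_0}\neq 0$, so by continuity $(u_1^{(n)})^\top \vx_{i_0}$ is bounded away from zero for large $n$, and $\vx_{i_0}^\top \ms_n^{-1}\vx_{i_0}\ge c/\lambda_1^{(n)}$. The third term of $K$ is therefore at least $(c/(b\lambda_1^{(n)}))^\beta$, which blows up polynomially in $1/\lambda_1^{(n)}$ and dominates the merely logarithmic behaviour of the first two terms; hence $K(\ms_n)\to+\infty$, a contradiction.

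Hence $\lambda_1^{(n)}$ is bounded away from $0$ and $\lambda_d^{(n)}\to\infty$. For each $i$ set $j_i:=\min\{j:(u_j^\infty)^\top \vx_i\neq 0\}$; continuity again gives $|(u_{j_i}^{(n)})^\top \vx_i|\ge c_i>0$ for large $n$, so $\log(\vx_i^\top \ms_n^{-1}\vx_i)\ge-\log\lambda_{j_i}^{(n)}+\log c_i$. Writing $N_j=|\{i:j_i=j\}|$ and $M_j=\sum_{k\ge j}N_k=|\Xc\cap W_j|$ where $W_j:=\text{span}(u_j^\infty,\ldots,u_d^\infty)$ has dimension $d-j+1$, an Abel (summation by parts) rearrangement of $\tfrac{n}{2}\sum_j\log\lambda_j^{(n)}-(d/2-\alpha)\sum_j N_j\log\lambda_j^{(n)}$ yields
\begin{equation*}
K(\ms_n)\ \ge\ n\alpha\log\lambda_1^{(n)}\ +\ \tfrac12\sum_{j=2}^d\bigl[n(d-j+1)-(d-2\alpha)M_j\bigr]\delta_j^{(n)}\ +\ O(1),
\end{equation*}
with $\delta_j^{(n)}:=\log\lambda_j^{(n)}-\log\lambda_{j-1}^{(n)}\ge 0$. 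Applying the hypothesis to $L=W_j$ gives $M_j<(d-j+1)n/(d-2\alpha)$ strictly; since the $M_j$ are integers, every bracket is bounded below by a uniform $\eta>0$, and therefore
\begin{equation*}
K(\ms_n)\ \ge\ n\alpha\log\lambda_1^{(n)}\ +\ \tfrac{\eta}{2}\bigl(\log\lambda_d^{(n)}-\log\lambda_1^{(n)}\bigr)\ +\ O(1)\ \to\ +\infty,
\end{equation*}
contradicting boundedness.

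The main technical obstacle is marrying the two time scales: the flag $\{W_j\}$ and the indices $j_i$ are defined via the \emph{limit} eigenbasis, yet the bound must be valid along the original sequence. This is handled by the subsequence extraction $U_n\to U^\infty$ together with the continuity estimate $|(u_{j_i}^{(n)})^\top\vx_i|\ge c_i$, which keeps the $\log c_i$ remainders as bounded $O(1)$ constants. The uniform positive gap $\eta$ in the bracketed coefficients comes from combining the strict fraction inequality with the integrality of the counts $M_j$; once these pieces are in place, both case distinctions fit together to yield coercivity of $K$.
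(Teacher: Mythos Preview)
Your two-case split mirrors the paper's structure, but your treatment---especially of the second case---is considerably more careful: the paper simply asserts that if $d_L$ eigenvalues diverge at a common rate $\lambda$ with fixed eigenvectors then $K(\ms)\sim \tfrac{n}{2}d_L\log\lambda-(\tfrac d2-\alpha)|\Xc\cap L|\log\lambda$, whereas you correctly extract a convergent eigenbasis, define the flag $\{W_j\}$, and use Abel summation to handle eigenvalues diverging at different rates. One small remark: the integrality of $M_j$ is not what gives the uniform $\eta>0$, since $d-2\alpha$ need not be rational; rather, for the fixed subsequence the brackets $n(d-j+1)-(d-2\alpha)M_j$ are finitely many fixed positive reals, so their minimum is automatically positive.

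There is, however, a genuine gap in your first case. You assert that when $\lambda_1^{(n)}\to 0$ the power term $(c/(b\lambda_1^{(n)}))^\beta$ dominates ``the merely logarithmic behaviour of the first two terms''. But nothing prevents $\lambda_d^{(n)}\to\infty$ simultaneously (take, e.g., $\lambda_1^{(n)}=1/\log n$ and $\lambda_d^{(n)}=e^{n}$), and then the first two terms carry contributions of order $|\log\lambda_d^{(n)}|$ which are \emph{not} controlled by any power of $1/\lambda_1^{(n)}$. (The paper's sketch has the same lacuna.) The clean fix is to run your Abel-summation estimate unconditionally: the inequality
\[
K(\ms_n)\ \ge\ n\alpha\log\lambda_1^{(n)}\ +\ \tfrac12\sum_{j\ge 2}B_j\,\delta_j^{(n)}\ +\ O(1)\ +\ \text{(third term)}
\]
holds without assuming $\lambda_1^{(n)}$ bounded away from $0$, and since every $B_j>0$ and $\delta_j^{(n)}\ge 0$ this yields $K(\ms_n)\ge n\alpha\log\lambda_1^{(n)}+O(1)+(c/(b\lambda_1^{(n)}))^\beta$. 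Now the single logarithm $n\alpha\log\lambda_1^{(n)}$ is genuinely dominated by the polynomial term, and $K(\ms_n)\to+\infty$ follows.
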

\begin{proof}
If $\norm{\ms^{-1}} \to \infty$ and since the data span the whole space, it is possible to find a datum $x_1$ such that $t_1=\vx_1^T\inv{\ms}\vx_1 \to \infty$. Since 
\begin{equation*}
\lim\limits_{t \to \infty} c_1 \log(t) + t^{c_2}+c_3 \to \infty
\end{equation*}
for constants $c_1$,$c_3$ and $c_2 >0$, it follows that  $K(\ms) \to \infty $ whenever $\norm{\ms^{-1}} \to \infty$. 

If $\norm{\ms} \to \infty$ and $\norm{\ms^{-1}}$ is bounded, then the third term in expression of $K(\ms)$ is bounded. Assume that $d_L$ is the number of eigenvalues of $\ms$ that go to $\infty$ and $|\Xc \cap L|$ is the number of data that lie in the subspace span by these eigenvalues. Then in the limit when eigenvalues of $\ms$ go to $\infty$, $K(\ms)$ converges to the following limit 
\begin{equation*}
\lim\limits_{\lambda \to \infty} \tfrac{n}{2}d_L\log \lambda + (\tfrac d 2- \alpha) |\Xc \cap L| \log \lambda^{-1} +c
\end{equation*}
Apparently if $\tfrac{n}{2}d_L + (\tfrac d 2- \alpha) |\Xc \cap L| >0$, then $K(\ms) \to \infty$ and the proof is complete.
\end{proof}
It is important to note that overlap condition~\eqref{eq.condF} can be fulfilled easily by assuming that the number of data points is larger than their dimensionality and that they are noisy. Using Lemma~\ref{lem.Kotzinfty}  with Theorem~\ref{thm.existMin} we obtain the following key result for Kotz-type distributions. 
\begin{theorem}[MLE existence]
  \label{thm.Kotzexist}
  If the data samples satisfy condition~\eqref{eq.condF}, then log-likelihood of Kotz-type distribution has a maximiser (i.e., there exists an mle).
\end{theorem}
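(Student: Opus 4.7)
The plan is straightforward: combine Lemma~\ref{lem.Kotzinfty} with Theorem~\ref{thm.existMin} applied to the specific form of $\Phi$ corresponding to the Kotz dgf~\eqref{eq.24}. Since maximising the log-likelihood is equivalent to minimising $\Phi(\ms) = K(\ms)$ as given in~\eqref{eq.kotzlog}, it suffices to show that $K$ attains its minimum on $\pp_d$.

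First, I would verify condition (i) of Theorem~\ref{thm.existMin}. For the Kotz dgf in~\eqref{eq.24},
\begin{equation*}
 -\log\varphi(t) = -(\alpha - \tfrac{d}{2})\log t + (t/b)^\beta,
\end{equation*}
which is continuous (hence lower semi-continuous) on $(0,\infty)$. Note that for each data point $\vx_i$ (with $\vx_i \neq 0$, ensured by the spanning assumption), the scalar $\vx_i^T \ms^{-1} \vx_i$ is strictly positive for every $\ms \succ 0$, so the composition $-\log\varphi(\vx_i^T \ms^{-1}\vx_i)$ is well-defined and lsc in $\ms$.

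Next, I would verify condition (ii), namely that $K(\ms) \to \infty$ whenever $\norm{\ms} \to \infty$ or $\norm{\ms^{-1}} \to \infty$. This is precisely the content of Lemma~\ref{lem.Kotzinfty}, whose hypothesis is the overlap condition~\eqref{eq.condF} assumed in the present theorem. Since the data $\Xc$ span the whole space and satisfy~\eqref{eq.condF} by assumption, Lemma~\ref{lem.Kotzinfty} applies directly and yields the desired coercivity of $K$ in the Riemannian metric~\eqref{eq.metr}.

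Having established both hypotheses, Theorem~\ref{thm.existMin} immediately implies that $K(\ms)$ attains its minimum on $\pp_d$, which is the sought mle. The only subtle point is making sure that the spanning assumption on $\Xc$ coupled with~\eqref{eq.condF} covers the two sub-cases of the argument in Lemma~\ref{lem.Kotzinfty}: the easy case $\norm{\ms^{-1}} \to \infty$ (where the $t^\beta$ term dominates), and the more delicate case $\norm{\ms} \to \infty$ with $\norm{\ms^{-1}}$ bounded, where the competition between $\tfrac{n}{2}d_L\log\lambda$ and $(\tfrac{d}{2}-\alpha)|\Xc\cap L|\log\lambda^{-1}$ is controlled precisely by~\eqref{eq.condF}. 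I would flag this as the main conceptual step, but no new calculation is needed since it is already carried out in Lemma~\ref{lem.Kotzinfty}.
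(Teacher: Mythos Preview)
Your proposal is correct and follows exactly the route the paper takes: the paper simply states that the result follows by combining Lemma~\ref{lem.Kotzinfty} (coercivity of $K$) with Theorem~\ref{thm.existMin}. Your write-up is in fact slightly more explicit than the paper's, since you also verify condition~(i) of Theorem~\ref{thm.existMin} (continuity of $-\log\varphi$), which the paper leaves implicit.
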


\subsubsection{Optimisation algorithm} 
Once existence is ensured, one may use any local optimisation method to minimise~\eqref{eq.main} to obtain the desired mle. For members of the class g-convex that do not lie in class LN or class LC, we recommend  invoking the manifold optimisation techniques summarised in \S\ref{sec.manopt}.

\subsection{MLE for distributions in class LN}
\label{sec.mle.ln}
For negative log-likelihoods~\eqref{eq.main} in class LN, we can circumvent the heavy machinery of manifold optimisation, and obtain simple fixed-point algorithms by appealing to the contraction results developed in \S\ref{sec.ln}. We note that some members of class g-convex may also turn out to lie in class LN, so the discussion below also applies to them. 

As an illustrative example of these results, consider the problem of finding the minimum of negative log-likelihood solution of Kotz-type distribution~\eqref{eq.kotzlog}. If the corresponding nonlinear equation~\eqref{eq.fixediter} with corresponding $h(.)=(\tfrac d2 -\alpha)(.)^{-1}+\tfrac{\beta}{b^{\beta}} (.)^{\beta-1}$ has a positive definite solution, then it is a candidate mle; if it is unique, then it is the desired solution to~\eqref{eq.kotzlog}.

But how should we  solve~\eqref{eq.fixediter}? This is where the theory developed in \S\ref{sec.ln} comes into play. Convergence of the iteration~\eqref{eq:iterform} as applied to~\eqref{eq.fixediter} can be obtained from Theorem~\ref{thm.thomln}. But in the Kotz case we can actually show a stronger result that helps ensure better geometric convergence rates for the fixed-point iteration.

\begin{lemma}
\label{lem.lnKotz}
If $c\geq0$ and $-1<\tau<1$, then $g(x)=cx + x^\tau$ is log-contractive.
\end{lemma}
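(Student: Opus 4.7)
The plan is to reduce log-contractivity of $g$ on $(0,\infty)$ to ordinary strict $1$-Lipschitz continuity of an auxiliary function on $\reals$, then estimate its derivative via a convex-combination argument.

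Concretely, I would introduce the change of variables $u = \log t$ and define
\begin{equation*}
\tilde{g}(u) := \log g(e^u) = \log\bigl(c e^u + e^{\tau u}\bigr),\qquad u \in \reals.
\end{equation*}
Log-contractivity of $g$, i.e., $|\log g(t) - \log g(s)| < |\log t - \log s|$ for $t \neq s$, is then equivalent to the strict inequality $|\tilde g(u) - \tilde g(v)| < |u - v|$ for all $u \neq v$. So it suffices to show $|\tilde{g}'(u)| < 1$ for every $u \in \reals$ and then integrate.

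Differentiating, I get
\begin{equation*}
\tilde{g}'(u) = \frac{c e^u + \tau e^{\tau u}}{c e^u + e^{\tau u}} = \frac{a}{a+b} \cdot 1 + \frac{b}{a+b} \cdot \tau,
\end{equation*}
where $a := c e^u \ge 0$ and $b := e^{\tau u} > 0$. Thus $\tilde g'(u)$ is a convex combination of $1$ and $\tau$ with the weight on $\tau$ strictly positive. Since $-1 < \tau < 1$ by hypothesis, this convex combination lies in the \emph{closed} interval $[\tau, 1)$ if $\tau \le 1$: the upper endpoint $1$ is excluded because $b/(a+b) > 0$, and the value equals $\tau$ only when $a = 0$ (i.e.\ $c = 0$). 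In either case $\tilde g'(u) \in [\tau, 1) \subset (-1, 1)$, so $|\tilde g'(u)| < 1$ pointwise.

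Finally, for $u \neq v$, continuity of $\tilde g'$ together with the strict pointwise bound gives
\begin{equation*}
|\tilde g(u) - \tilde g(v)| \ = \ \left|\int_v^u \tilde g'(w)\,dw\right| \ \le \ \int_{\min(u,v)}^{\max(u,v)} |\tilde g'(w)|\,dw \ < \ |u - v|,
\end{equation*}
the last step because $1 - |\tilde g'|$ is continuous and strictly positive on the nondegenerate interval of integration. Translating back via $u = \log t$, $v = \log s$ yields $|\log g(t) - \log g(s)| < |\log t - \log s|$ for $s \neq t$, i.e., $g$ is log-contractive. There is no real obstacle; the only subtlety is keeping track of the strict inequality, which is secured by the convex-combination representation of $\tilde g'(u)$.
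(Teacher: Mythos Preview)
Your proof is correct and takes a genuinely different route from the paper. The paper argues by direct algebraic manipulation: writing $t = ks$ with $k \ge 1$, it expands $\log g(t)$ and factors out $\log k = |\log t - \log s|$, leaving a correction term $\log\bigl(1 + s^\tau(k^{\tau-1}-1)/(cs+s^\tau)\bigr)$ that is shown to be negative; a separate case is then handled when $g(t) < g(s)$ (which can occur for $\tau \le 0$). Your approach instead passes to the conjugate $\tilde g = \log \circ\, g \circ \exp$ and bounds $|\tilde g'|$ via the convex-combination identity $\tilde g'(u) = \tfrac{a}{a+b}\cdot 1 + \tfrac{b}{a+b}\cdot \tau$, which sidesteps the case split entirely and makes the role of the hypothesis $|\tau| < 1$ transparent. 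The paper's argument is calculus-free; yours is shorter, more systematic, and immediately generalises to any $g$ for which the elasticity $xg'(x)/g(x)$ stays strictly inside $(-1,1)$. One cosmetic remark: the clause ``if $\tau \le 1$'' in your argument is redundant since $\tau < 1$ by hypothesis.
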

\begin{proof}
Without loss of generality assume $t=ks$ with $k \geq 1$. Assume that $g(t) \geq g(s)$:
 \begin{align*}
  \log g(t) \quad&=\quad \log (ct+t^\tau)\\
 \quad&=\quad \log (kcs+k^\tau s^\tau ) \\
 \quad&=\quad \log (k(cs+s^\tau )+k^\tau s^\tau -ks^\tau ) \\
 \quad&=\quad \log k(cs+s^\tau ) \Bigl( 1+ \frac{k^\tau s^\tau -ks^\tau }{k(cs+s^\tau )} \Bigr) \\
 \quad&=\quad \log k + \log g(s) + \log \Bigl ( 1+ \frac{s^\tau (k^{\tau -1}-1)}{(cs+s^\tau )} \Bigr) \\
 |\log g(t) -\log g(s)|\quad&=\quad|\log t- \log s| + \log \Bigl ( 1+ \frac{s^\tau (k^{\tau -1}-1)}{(cs+s^\tau )} \Bigr )
 \end{align*}
 Since the second term is negative, therefore $g$ is log-contractive. Consider the other case $g(t) \geq g(s)$, that could happen only when $\tau  \leq 0$.
 \begin{align*}
   \log g(s) \quad&=\quad \log (cs+s^\tau )\\
  \quad&=\quad \log (ct/k+k^{|\tau |}t^\tau ) \\
  \quad&=\quad \log (k(ct+t^\tau )+k^{|\tau |}t^\tau +ct/k-ckt-kt^\tau ) \\
  \quad&=\quad \log k(ct+t^\tau ) \Bigl( 1+ \frac{k^{|\tau |}t^\tau+ ct/k-ckt-kt^\tau }{k(ct+t^\tau )} \Bigr) \\
  \quad&=\quad \log k + \log g(t) + \log \Bigl( 1+ \frac{ct\left (k^{-2}-1 \right)+t^\tau (k^{|\tau |-1}-1)}{(ct+t^\tau )} \Bigr)\\
  |\log g(t) -\log g(s)|\quad&=\quad|\log t- \log s| + \log \Bigl ( 1+ \frac{ct\left (\frac{1}{k^2}-1 \right )+t^\tau (k^{|\tau |-1}-1)}{(ct+t^\tau )} \Bigr).
 \end{align*}
 In this case, the second term is also negative. Therefore $h$ is log-contractive.
\end{proof}

Assume $\tau =\beta-1$, $c=\tfrac{b^{\beta}(d/2 -\alpha)}{\beta}$ and knowing that $h(.)=g(\beta b^{-\beta} (.))$ has the same contraction factor as $g(.)$, Lemma~\ref{lem.lnKotz} implies that $h$ in the iteration~\eqref{eq.fixediter} for the Kotz-type distributions with $0<\beta<2$ and $\alpha\leq\tfrac d 2$ is log-contractive. Based on Theorem~\ref{thm.Kotzexist}, $K(\ms)$ has at least one minimum. Thus using Theorem~\ref{thm.ln.convg}, we have the following main convergence result.
\begin{theorem}
  \label{thm.kotz.cvg}
If the data samples satisfy~\eqref{eq.condF}, then  iteration~\eqref{eq.fixediter} for Kotz-type distributions with $0<\beta<2$ and $\alpha\leq\tfrac d 2$ converges to a unique fixed point.
\end{theorem}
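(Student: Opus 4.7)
The plan is to assemble three earlier results: existence of a minimiser for the Kotz log-likelihood (Theorem~\ref{thm.Kotzexist}), log-contractivity of the underlying scalar map $h$ (Lemma~\ref{lem.lnKotz}), and the general convergence theorem for log-contractive fixed-point iterations (Theorem~\ref{thm.ln.convg}). Since all three are already in hand, the proof is essentially a verification that the Kotz-type dgf fits the hypotheses of each.

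First I would carry out the explicit computation of $h = -\varphi'/\varphi$ for a Kotz dgf $\varphi(t) = t^{\alpha - d/2}\exp(-(t/b)^\beta)$, which yields
\[
h(t) \;=\; (\tfrac{d}{2}-\alpha)\,t^{-1} \;+\; \tfrac{\beta}{b^\beta}\,t^{\beta-1}.
\]
Under the hypotheses $\alpha \le d/2$ and $0<\beta<2$, the coefficient of $t^{-1}$ is nonnegative and the exponent $\beta-1$ lies in $(-1,1)$. Thus $h$ is a nonnegative linear combination of $t^{-1}$ and $t^{\tau}$ with $|\tau|<1$, which (after the affine change of variable noted right after Lemma~\ref{lem.lnKotz}, namely $h(\cdot)=g(\beta b^{-\beta}(\cdot))$ with $\tau=\beta-1$ and $c=b^{\beta}(d/2-\alpha)/\beta$, using that precomposing by a positive scalar preserves the log-contraction factor) places $h$ in the scope of Lemma~\ref{lem.lnKotz}. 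Hence $h$ is log-contractive on $(0,\infty)$.

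Next I would push this contractivity up to the matrix map $\Gc$ of the fixed-point iteration~\eqref{eq.fixediter}: Proposition~\ref{prop.ln} immediately upgrades log-contractivity of the scalar $h$ to log-contractivity of $\Gc:\pp_d\to\pp_d$ in the Thompson metric $\thom$. To apply Theorem~\ref{thm.ln.convg} I still need a fixed point to exist, and this is exactly what Theorem~\ref{thm.Kotzexist} supplies under the assumed overlap condition~\eqref{eq.condF}: it produces a minimiser $\ms^\star$ of $K(\ms)$, which by construction satisfies the first-order condition~\eqref{eq.fixediter} and so is a fixed point of $\Gc$. Plugging $\Gc$ and $\ms^\star$ into Theorem~\ref{thm.ln.convg} yields uniqueness of the fixed point and convergence of the Picard iterates $\ms_{k+1}=\Gc(\ms_k)$ from any starting point in $\pp_d$, which is precisely the conclusion sought.

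The only genuinely delicate point in this assembly is the first one: matching the Kotz-type $h$ to the form $g(x)=cx+x^{\tau}$ required by Lemma~\ref{lem.lnKotz}. The literal form of the lemma treats a linear-plus-power sum, whereas our $h$ has an \emph{inverse}-plus-power structure. The cleanest way to handle this is to observe directly, via the substitution $u=\log t$, that
\[
\tfrac{d}{du}\log h(e^u) \;=\; \frac{-(d/2-\alpha)e^{-u} + \tfrac{\beta(\beta-1)}{b^\beta}e^{(\beta-1)u}}{(d/2-\alpha)e^{-u} + \tfrac{\beta}{b^\beta}e^{(\beta-1)u}} \;\in\;(-1,\,\beta-1),
\]
so that its absolute value is bounded away from $1$ whenever $\alpha\le d/2$ and $0<\beta<2$; this is an alternative direct verification that $h$ is log-contractive, bypassing any ambiguity in how Lemma~\ref{lem.lnKotz} is reparametrised. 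Once that single technical step is confirmed, the remainder of the proof is a one-line chain: Proposition~\ref{prop.ln} $\Rightarrow$ log-contractivity of $\Gc$; Theorem~\ref{thm.Kotzexist} $\Rightarrow$ existence of a fixed point; Theorem~\ref{thm.ln.convg} $\Rightarrow$ uniqueness and convergence.
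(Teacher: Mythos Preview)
Your proposal is correct and follows exactly the paper's assembly: Lemma~\ref{lem.lnKotz} (via the reparametrisation the paper states just before the theorem) for log-contractivity of $h$, Proposition~\ref{prop.ln} to lift this to $\Gc$, Theorem~\ref{thm.Kotzexist} for existence of a minimiser and hence a fixed point, and Theorem~\ref{thm.ln.convg} for uniqueness and convergence. One minor caveat on your alternative derivative check: $F'(u)=\tfrac{d}{du}\log h(e^u)$ indeed lies in the open interval $(-1,\beta-1)\subset(-1,1)$, but it is \emph{not} uniformly bounded away from $1$ in absolute value (it tends to $-1$ as $u\to-\infty$ when $\alpha<d/2$); nevertheless the pointwise strict bound $|F'(u)|<1$ already gives log-contractivity in the paper's sense via the mean value theorem, so your argument stands.
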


\subsection{MLE for distributions in class LC}
\label{sec.lc}
For completeness, we briefly mention class LC here, which is perhaps one of the most studied classes of ECDs, at least from an algorithmic point-of-view~\citep{kent91}. Therefore, we only discuss it summarily, and present our new results. 

For the class LC, we assume that the dgf $\varphi$ is log-convex. Without assumptions that are typically made in the literature, it can be that neither the GC nor the LN analysis applies to class LC. However, the optimisation problem still has structure that allows simple and efficient algorithms. Specifically, here the objective function $\Phi(\ms)$ can be written as a a difference of two convex functions by introducing the variable $P=\inv{\ms}$, wherewith we have $\Phi(P) = -an\log\det(P) - \nlsum_i\log\varphi(\vx_i^TP\vx_i)$. 

To this representation of $\Phi$ we may now apply the CCCP procedure~\citep{cccp} to search for a locally optimal point. The method operates as follows
\begin{equation*}
  P^{k+1} \gets \argmin_{P \succ 0}\quad -\tfrac n 2\log\det(P) + \trace\left(P\nlsum_ih(\vx_i^TP^k\vx_i)\vx_i\vx_i^T\right),
\end{equation*}
which yields the update
\begin{equation}
  \label{eq.7}
  P^{k+1} = \left(\tfrac{2}{n}\nlsum_ih(\vx_i^TP^k\vx_i)\vx_i\vx_i^T\right)^{-1}.
\end{equation}

Because $P^{k+1}$ is constructed using the CCCP procedure, it can be shown that the sequence $\set{\Phi(P^k)}$ is monotonically decreasing. Furthermore, since we assumed $h$ to be nonnegative, therefore the iteration stays within positive semidefinite cone. If the cost function goes to infinity whenever the covariance matrix is singular, then using Theorem~\ref{thm.existMin} we can conclude that iteration converges to a positive definite matrix. Thus, we can state the following key result for class LC.
\begin{theorem}[Convergence]
\label{thm.lcconverge}
Assume that $\Phi(P)$ goes to infinity whenever $P$ reaches the boundary of $\pp^d$, i.e. 
$\norm{P} \to \infty  \lor \norm{P^{-1}} \to \infty \implies \Phi(P) \to \infty$. Furthermore if $-\log\varphi$ is concave and $h$ is non-negative, then each step of the iterative algorithm given in~\eqref{eq.7} decreases the cost function and furthermore it converges to a positive definite solution.
\end{theorem}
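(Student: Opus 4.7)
The plan is to view update~\eqref{eq.7} as a majorization-minimization (CCCP) step for a difference-of-convex splitting of $\Phi$, and then to use the coercivity hypothesis to keep iterates inside a compact subset of $\pp_d$.

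First I would write $\Phi(P) = f_{\mathrm{cvx}}(P) + f_{\mathrm{cav}}(P)$, where $f_{\mathrm{cvx}}(P) = -\tfrac{n}{2}\log\det(P)$ is convex and $f_{\mathrm{cav}}(P) = -\nlsum_i \log\varphi(x_i^{T} P x_i)$ is concave, since $-\log\varphi$ is concave by hypothesis and $P \mapsto x_i^{T} P x_i$ is linear. Linearising the concave part at $P^k$ via the tangent-plane inequality $f_{\mathrm{cav}}(P) \le f_{\mathrm{cav}}(P^k) + \langle \nabla f_{\mathrm{cav}}(P^k), P - P^k\rangle$ produces the majorizer
\begin{equation*}
\hat\Phi(P;P^k) := -\tfrac{n}{2}\log\det(P) + \trace\bigl(P\,\nlsum_i h(x_i^{T} P^k x_i)\,x_i x_i^{T}\bigr) + c_k,
\end{equation*}
where $h = -\varphi'/\varphi$ and $c_k$ depends only on $P^k$. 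By construction $\Phi(P) \le \hat\Phi(P;P^k)$ for every $P \succ 0$, with equality at $P = P^k$.

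Next I would identify $P^{k+1}$ of~\eqref{eq.7} as the unique minimiser of $\hat\Phi(\cdot;P^k)$ over $\pp_d$. Strict convexity of $-\log\det$ makes $\hat\Phi(\cdot;P^k)$ strictly convex on $\pp_d$, and since $h \ge 0$, the matrix $W_k := \tfrac{2}{n}\nlsum_i h(x_i^{T} P^k x_i)\,x_i x_i^{T}$ is positive semidefinite; whenever the samples corresponding to positive weights span $\reals^d$ it is in fact positive definite, and the first-order condition $-\tfrac{n}{2}P^{-1} + \tfrac{n}{2}W_k = 0$ yields $P^{k+1} = W_k^{-1} \succ 0$. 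Monotone descent then follows from the standard MM sandwich
\begin{equation*}
\Phi(P^{k+1}) \le \hat\Phi(P^{k+1};P^k) \le \hat\Phi(P^k;P^k) = \Phi(P^k).
\end{equation*}

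For convergence to a positive definite solution I would invoke the coercivity hypothesis. By monotonicity the whole sequence $\set{P^k}$ lies in the sublevel set $\Lc_0 := \set{P \succ 0 : \Phi(P) \le \Phi(P^0)}$, and the assumption that $\Phi \to \infty$ as $\norm{P}\to\infty$ or $\norm{P^{-1}}\to\infty$ forces $\Lc_0$ to be a compact subset of $\pp_d$ (bounded away from the boundary in both directions). Hence every cluster point of $\set{P^k}$ is positive definite, and $\set{\Phi(P^k)}$, being monotone and bounded below by $\min\Phi$ (which exists by Theorem~\ref{thm.existMin}), converges. The successive-difference identity $\Phi(P^k) - \Phi(P^{k+1}) \to 0$ together with the sandwich above implies $\hat\Phi(P^k;P^k) - \hat\Phi(P^{k+1};P^k) \to 0$, and strict convexity of the majorizer lets one deduce $P^{k+1} - P^k \to 0$, so each cluster point is a stationary point of $\Phi$ lying in $\pp_d$, which is the desired conclusion.

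The main obstacle is ruling out that the update matrix $W_k$ becomes singular or that the iterates drift toward a rank-deficient limit: nonnegativity of $h$ only guarantees $W_k \succeq 0$, not $W_k \succ 0$, and a priori $P^k$ could approach the boundary of $\pp_d$. Both pathologies are excluded by the coercivity hypothesis, which is precisely why it appears in the statement; this is the one place where the proof genuinely uses the growth assumption on $\Phi$ and not merely the d.c.\ structure.
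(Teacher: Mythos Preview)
Your proposal is correct and follows essentially the same approach as the paper: the paper derives~\eqref{eq.7} as a CCCP step, notes that this makes $\{\Phi(P^k)\}$ monotonically decreasing, observes that $h\ge 0$ keeps the iterates positive (semi)definite, and then invokes the coercivity hypothesis together with Theorem~\ref{thm.existMin} to conclude convergence to a positive definite solution. Your write-up is in fact more detailed than the paper's own treatment, spelling out the d.c.\ splitting, the majorizer, the MM sandwich inequality, and the compactness/cluster-point argument that the paper leaves implicit.
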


A similar theorem but under more strict conditions was established in \citet{kent91}. Knowing that the iterative algorithm in~\eqref{eq.7} is the same as   \eqref{eq.fixediter} and using Theorem~\ref{thm.lcconverge} with the existence result of Theorem~\ref{thm.Kotzexist} and the uniqueness result of Corollary~\ref{corollary.kotz}, we can state the following theorem for Kotz-type distributions (\emph{cf.} Theorem~\ref{thm.kotz.cvg}).
\begin{theorem}
  \label{thm.kotz.cvg2}
  If the data samples satisfy condition~\eqref{eq.condF}, then  iteration~\eqref{eq.fixediter} for Kotz-type distributions with $\beta\geq1$ and $\alpha\leq\tfrac d 2$ converges to a unique fixed point.
\end{theorem}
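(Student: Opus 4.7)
The plan is to stitch together three ingredients already in hand: existence of a maximum-likelihood estimate under~\eqref{eq.condF}, uniqueness via strict g-convexity of the Kotz log-likelihood for $\alpha\le d/2$, and monotone descent of the fixed-point map along $\Phi$. Together these force the Picard iteration~\eqref{eq.fixediter} to converge to the unique minimizer of $\Phi$.

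Concretely, I would first invoke Theorem~\ref{thm.Kotzexist} to secure a minimizer $\ms^*\in\pp_d$ of $\Phi$; the underlying Lemma~\ref{lem.Kotzinfty} together with condition~\eqref{eq.condF} makes $\Phi$ coercive on $(\pp_d,\delta_R)$, and Theorem~\ref{thm.existMin} then delivers $\ms^*$ as an attained minimum. Any such $\ms^*$ satisfies the Euler equation~\eqref{eq.fonc2}, hence is a fixed point of~\eqref{eq.fixediter}. Next, Corollary~\ref{corollary.kotz} applied with $\alpha\le d/2$ yields that $\Phi$ is strictly g-convex; on the Hadamard manifold $\pp_d$ a strictly g-convex function has at most one critical point, so $\ms^*$ is the \emph{unique} fixed point of~\eqref{eq.fixediter}.

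I would then apply Theorem~\ref{thm.lcconverge}. After noting that the CCCP iterate~\eqref{eq.7} and the Picard iterate~\eqref{eq.fixediter} define the same sequence, this theorem yields $\Phi(\ms_{k+1})\le\Phi(\ms_k)$ with $\ms_k\succ0$ for all $k$. Coercivity of $\Phi$ then confines $\{\ms_k\}$ to a compact subset of $\pp_d$, so some subsequence satisfies $\ms_{k_j}\to\bar\ms\succ 0$. Continuity of the Picard map $\Gc$ gives $\bar\ms=\Gc(\bar\ms)$, meaning $\bar\ms$ is a critical point of $\Phi$; uniqueness from the previous step forces $\bar\ms=\ms^*$, and since every subsequential limit must equal $\ms^*$ the whole sequence converges.

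The delicate step will be verifying the hypotheses of Theorem~\ref{thm.lcconverge} across the full range $\beta\ge 1$. Coercivity and the non-negativity of $h(t)=(d/2-\alpha)/t+(\beta/b^\beta)\,t^{\beta-1}$ are immediate from $\alpha\le d/2$ and $\beta\ge 1$. The concavity of $-\log\varphi(t)=(d/2-\alpha)\log t+(t/b)^\beta$ is the touchier one: the logarithmic summand is concave, but $(t/b)^\beta$ becomes convex once $\beta>1$. To cover the full range one likely has to regroup the DC decomposition of $\Phi$ in the variable $P=\ms^{-1}$, absorbing $(1/b^\beta)\sum_i(x_i^TPx_i)^\beta$ and $-\tfrac{n}{2}\log\det P$ into the convex part $F$, and leaving the concave piece $(d/2-\alpha)\sum_i\log(x_i^TPx_i)$ as the $G$ to be linearized; one then checks that the Majorize-Minimize surrogate built from this alternative decomposition still reproduces the update~\eqref{eq.7}, thereby preserving the descent property that feeds the topological argument above.
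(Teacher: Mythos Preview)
Your high-level plan---existence via Theorem~\ref{thm.Kotzexist}, uniqueness via the strict g-convexity of Corollary~\ref{corollary.kotz}, and descent of~\eqref{eq.fixediter} via Theorem~\ref{thm.lcconverge} after identifying~\eqref{eq.7} with~\eqref{eq.fixediter}---is precisely the paper's argument, and you are right to flag the concavity hypothesis of Theorem~\ref{thm.lcconverge} as the delicate point when $\beta>1$.

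The gap is in your proposed repair. With the regrouped decomposition $F(P)=-\tfrac{n}{2}\log\det P+b^{-\beta}\sum_i(x_i^TPx_i)^\beta$ (convex for $\beta\ge 1$) and $G(P)=(d/2-\alpha)\sum_i\log(x_i^TPx_i)$ (concave for $\alpha\le d/2$), the MM subproblem at $P^k$ is
\[
\min_{P\succ 0}\ -\tfrac{n}{2}\log\det P+b^{-\beta}\sum_i(x_i^TPx_i)^\beta+(d/2-\alpha)\sum_i\frac{x_i^TPx_i}{x_i^TP^kx_i},
\]
whose stationarity condition involves $\beta b^{-\beta}\sum_i(x_i^TP^{k+1}x_i)^{\beta-1}x_ix_i^T$, i.e.\ the power term evaluated at the \emph{new} iterate. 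This is not~\eqref{eq.7}, where the full weight $h(x_i^TP^kx_i)=(d/2-\alpha)(x_i^TP^kx_i)^{-1}+\beta b^{-\beta}(x_i^TP^kx_i)^{\beta-1}$ is frozen at $P^k$. Put differently, \eqref{eq.7} linearises the convex power term as well, and a tangent to a convex function is a \emph{minorant}, so the surrogate underlying~\eqref{eq.7} is not a majoriser of $\Phi$ for $\beta>1$; your alternative decomposition yields a valid majoriser but a different (implicit) update. Hence the ``one then checks'' step fails, and the descent argument for the actual iteration~\eqref{eq.fixediter} is not recovered. The paper does not confront this point either---it simply invokes Theorem~\ref{thm.lcconverge}---so if you want a fully rigorous proof for $\beta>1$ you will need a separate mechanism (e.g., a direct monotonicity or contraction argument for~\eqref{eq.fixediter}) rather than the CCCP route.
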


Theorem~\ref{thm.kotz.cvg2} and Theorem~\ref{thm.kotz.cvg} together show that the iteration~\eqref{eq.fixediter} for Kotz-type distributions with $\alpha\leq\tfrac d 2$ and regardless of the value of $\beta$ always converges to the unique mle estimate whenever it exists.

\section{Numerical results}
\label{sec.expt}
We briefly illustrate the numerical performance of our fixed-point iteration. The key message here is that our fixed-point iterations solve nonconvex problems that are further complicated by a positive definiteness constraint. But by construction the fixed-point iterations satisfy the constraint, so no extra eigenvalue computation is needed, which can provide substantial computational savings. In contrast, a general nonlinear solver must perform constrained optimisation, which may be unduly expensive.

We show two experiments (Figs.~\ref{fig.dim} and \ref{fig.two}) to demonstrate scalability of the fixed-point iteration with increasing dimensionality of the input matrix and for varying $\beta$ parameter of the Kotz distribution which influences convergence rate of our fixed-point iteration. For all simulations, we sampled 10,000 datapoints from the Kotz-type distribution with given $\alpha$ and $\beta$ parameters and a random covariance matrix.


We note that the problems are nonconvex with an open set as a constraint---this precludes direct application of semidefinite programming or  approaches such as gradient-projection (projection requires closed sets). We also tried interior-point methods but we did not include them in the comparisons because of their extremely slow convergence speed on this problem. So we choose to show the result of (Riemannian) manifold optimisation techniques~\citep{absil}.

We compare our fixed-point iteration against four different manifold optimisation methods: (i) steepest descent (SD); (ii) conjugate gradients (CG); (iii) trust-region (TR); and (iv) LBFGS, which implements Algorithm~\ref{alg.lrbfgs}. All methods are implemented in \textsc{Matlab} (including the fixed-point iteration); for manifold optimisation we extend the \textsc{Manopt} toolbox~\citep{manopt} to support the HPD manifold\footnote{The newest version of the \textsc{Manopt} toolbox ships with an implementation of the HPD manifold, but we use our own implementation as it includes some utilities specific to LBFGS.} as well as Algorithm~\ref{alg.lrbfgs}.

From Figure~\ref{fig.dim} we see that the basic fixed-point algorithm (FP) does not perform better than SD, the simplest manifold optimisation method.  Moreover, even when FP performs better than CG, TR, or LBFGS (Figure~\ref{fig.two}), it seems to closely follow SD. However, the scaling idea introduced in \S\ref{sec.example} leads to a fixed-point method (FP2) that outperforms all other methods, both with increasing dimensionality and varying $\beta$. The scale is chosen by ensuring $\text{trace}(\mathcal{M}(S_{k+1})) = d$.

These results merely indicate that the fixed-point approach can be  competitive. A more thorough experimental study to assess our algorithms remains to be undertaken.

\begin{figure*}[htbp]
\centering
\begin{subfigure}[b]{0.3\textwidth}
\includegraphics[width=\textwidth]{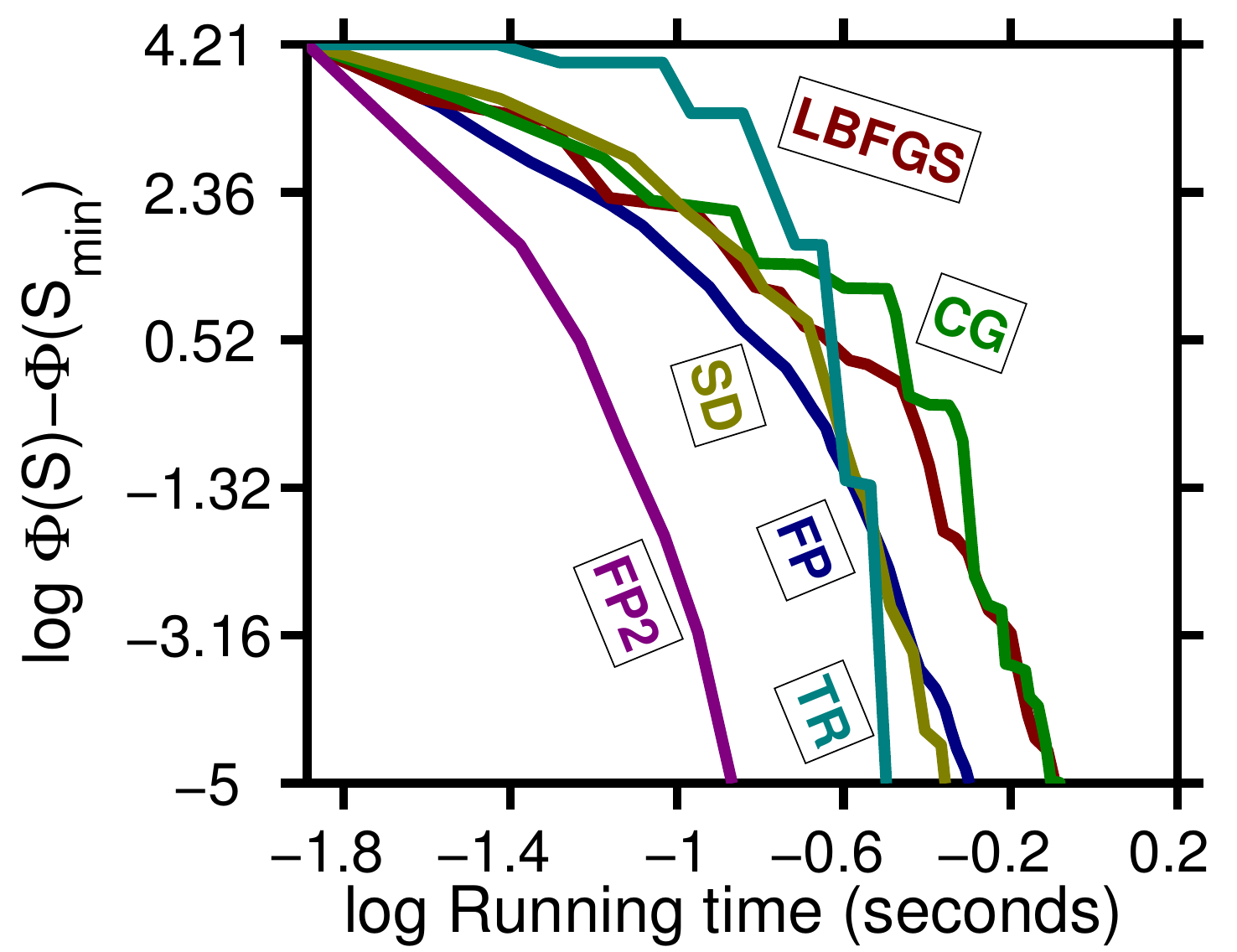} 
\end{subfigure}
\hspace {0.02\textwidth}
\begin{subfigure}[b]{0.3\textwidth}
\includegraphics[width=\textwidth]{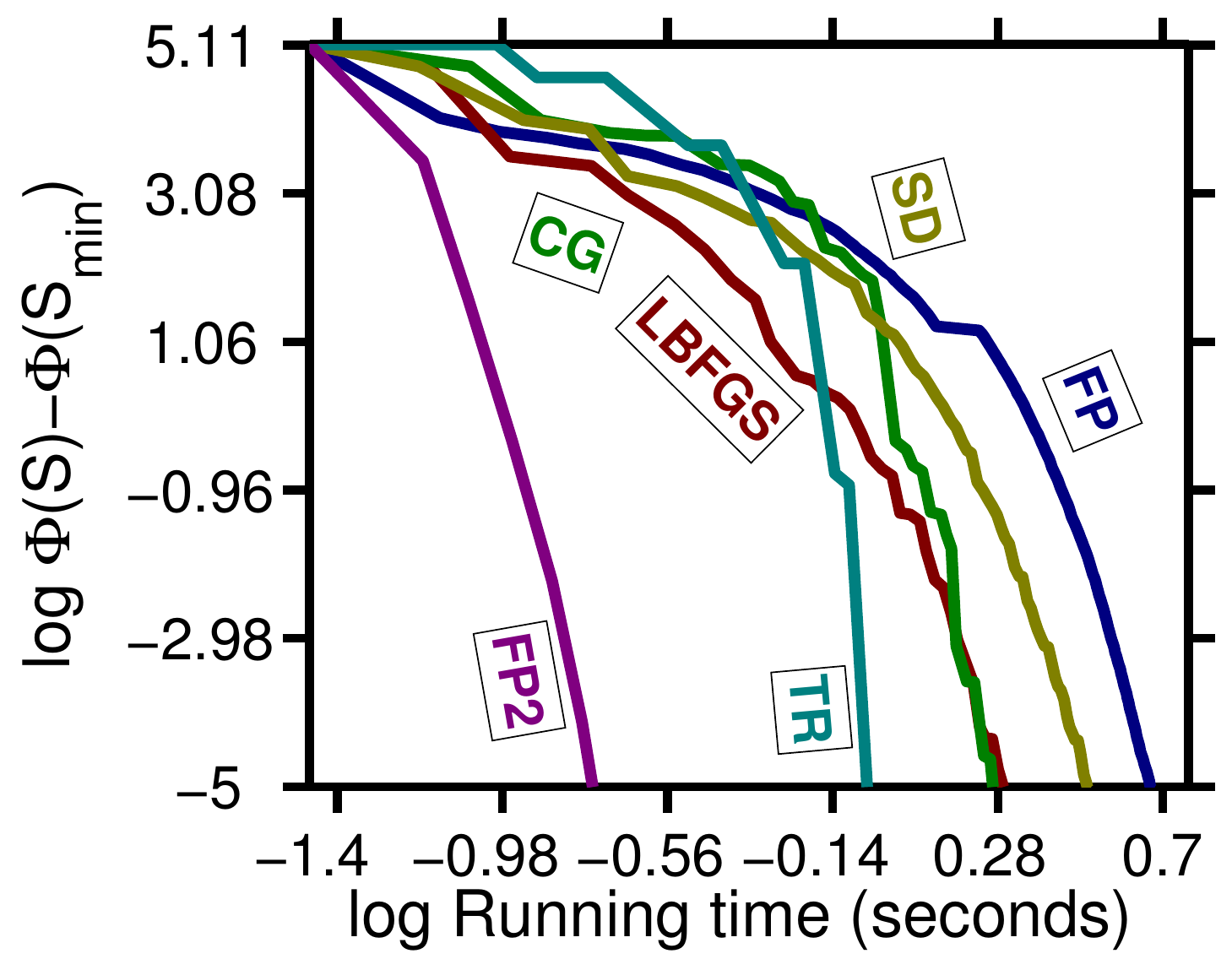}
\end{subfigure}
\begin{subfigure}[b]{0.3\textwidth}
\includegraphics[width=\textwidth]{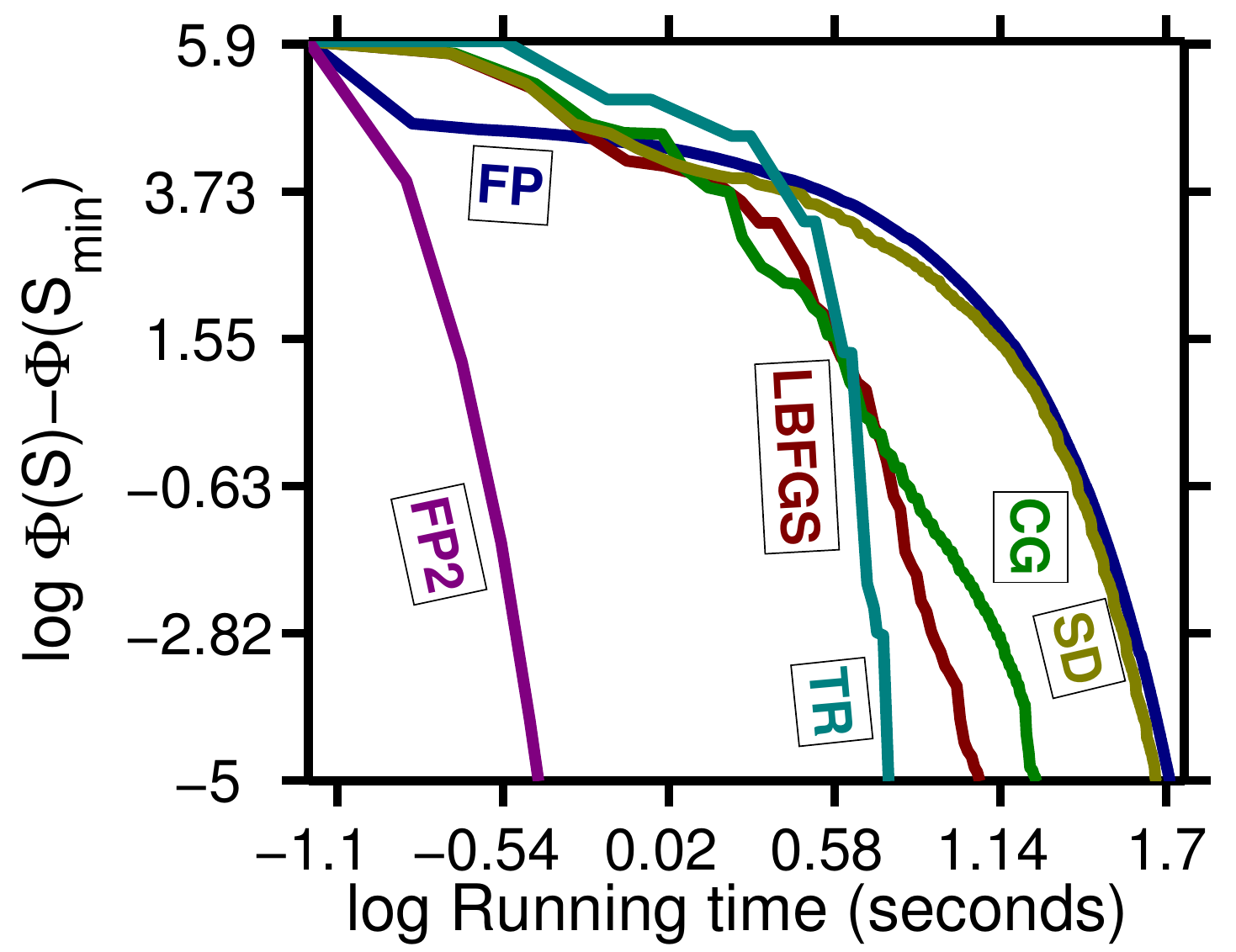} 
\end{subfigure}
\caption{\small Running times comparison of the fixed-point iterations compared with four different manifold optimization techniques to maximise a Kotz-likelihood with $\beta=0.5$ and $\alpha=1$ (see text for details). FP denoted normal fixed-point iteration and FP2 is the fixed-point iteration with scaling factor. Manifold optimization methods are steepest descent (SD), conjugate gradient (CG), limited-memory RBFGS (LBFGS) and trust-region (TR) . The plots show (from left to right), running times for estimating $S \in \pp_d$, for $d\in\set{4,16,64}$.}
\label{fig.dim}
\end{figure*}

\begin{figure*}[h]
  \centering
  \includegraphics[width=.3\textwidth]{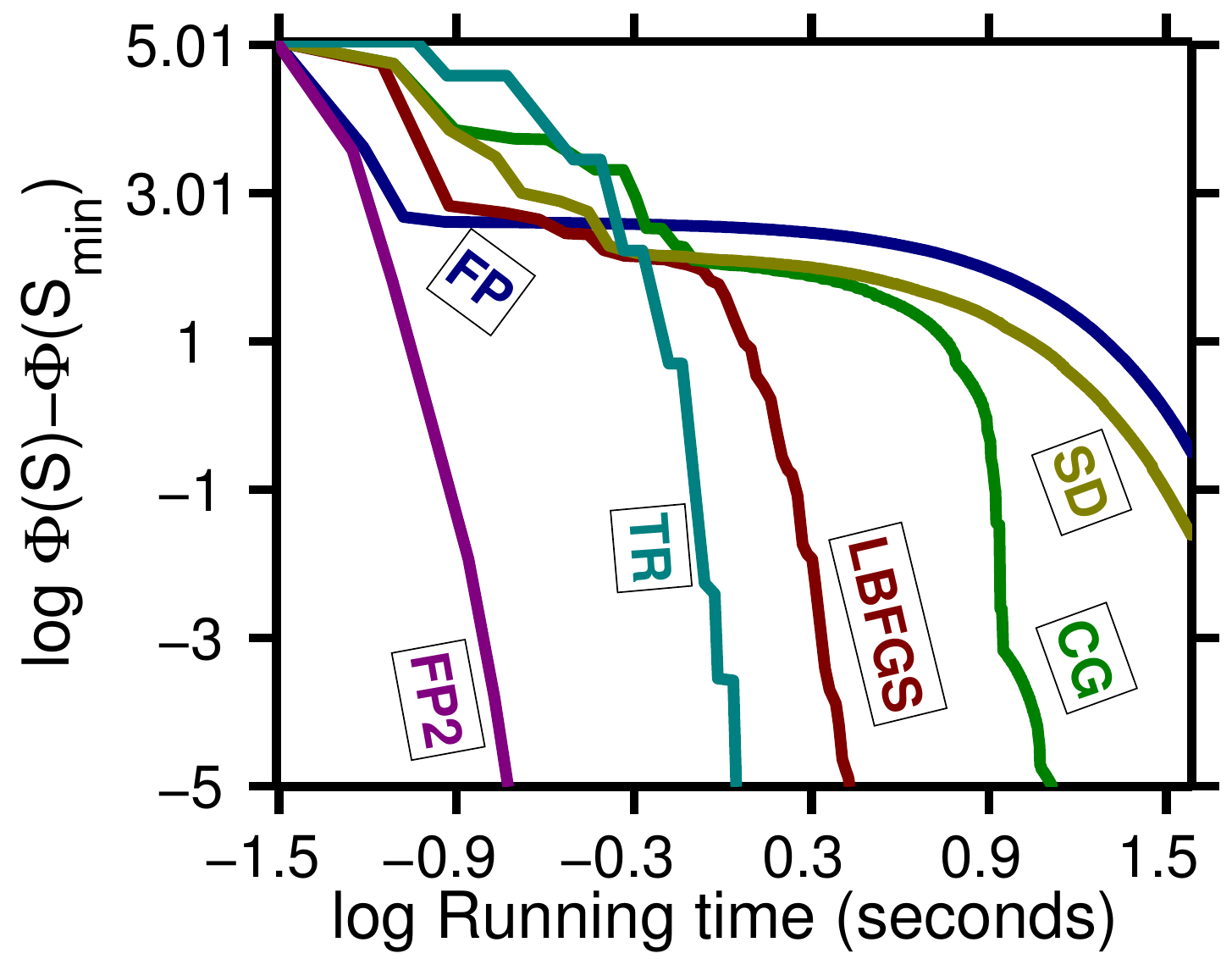}
  \includegraphics[width=.3\textwidth]{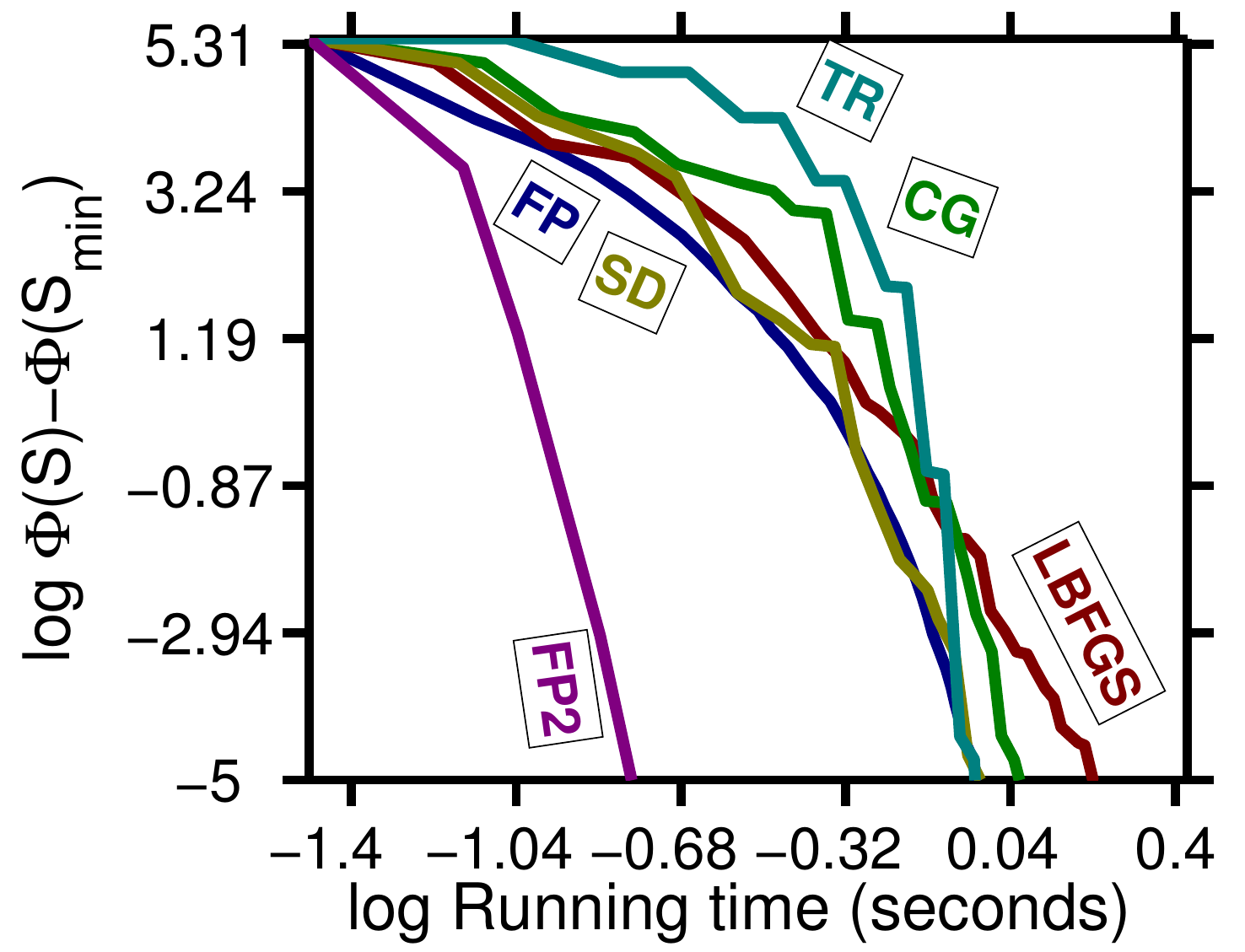}
  \includegraphics[width=.3\textwidth]{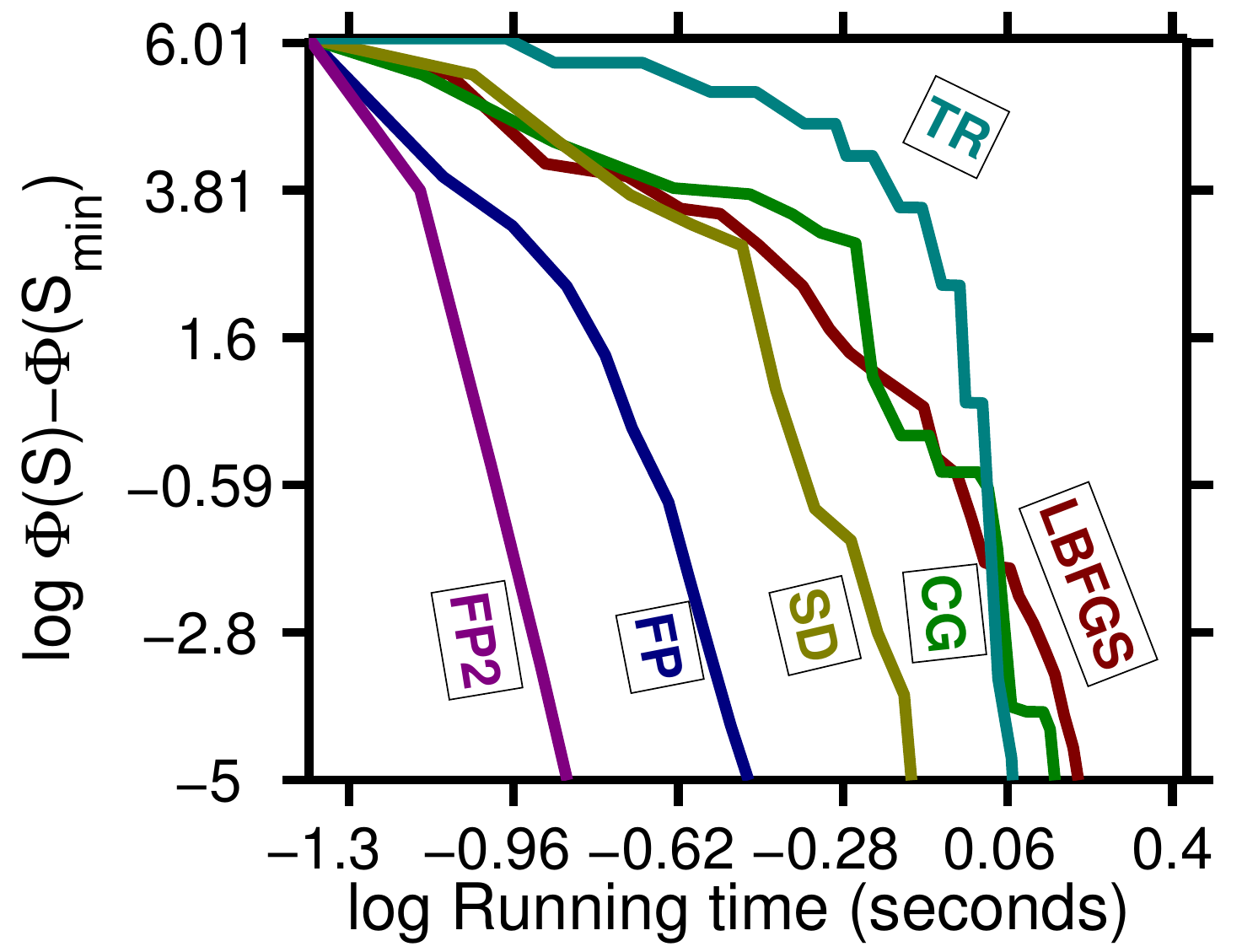}
  \caption{\small In the Kotz-type distribution, when $\beta$ gets close to zero or 2 or when $\alpha$ gets close to zero, the contraction factor becomes smaller which can impact the convergence rate. This figure shows running time variance for Kotz-type distributions with $d=16$ and $\alpha=2 \beta$ for different values of $\beta \in \{0.1, 1, 1.7\}$.}\label{fig.two}
\end{figure*}


\section{Conclusion}
We studied geometric optimisation for minimising certain nonconvex functions over the set of positive definite matrices. We showed key results that help recognise geodesic convexity; we also introduced a new class of log-nonexpansive functions which contains functions that need not be geodesically convex, but can still be optimised efficiently. Key to our ideas was a construction of fixed-point iterations in a suitable metric space on positive definite matrices. 

Additionally, we developed and applied our results in the context of maximum likelihood estimation for elliptically contoured distributions, covering instances substantially beyond the state-of-the-art. We believe that the general geometric optimisation techniques that we developed in this paper will prove to be of wider use and interest beyond our motivating examples and applications. Moreover, developing a more extensive geometric optimisation numerical package is an ongoing project.

\bibliographystyle{plainnat}
\setlength{\bibsep}{2pt}

\end{document}